\newtheorem{theorem}{Theorem}[section]
\newtheorem{lemma}[theorem]{Lemma}
\newtheorem{proposition}[theorem]{Proposition}
\newtheorem{observation}[theorem]{Observation}
\theoremstyle{definition}
\newtheorem{definition}[theorem]{Definition}
\newtheorem{example}[theorem]{Example}
\newtheorem{remark}[theorem]{Remark}
\newtheorem{remarks}[theorem]{Remarks}
\newtheorem{data}[theorem]{Data}
\newtheorem{conventions}[theorem]{Conventions}
\newtheorem{summary}[theorem]{Summary}
\newtheorem{chunk}[theorem]{}
\newtheorem*{proof-free}{Proof of Lemma \ref{main-lemma}}
\numberwithin{equation}{theorem}
\begin{document}

\baselineskip=16pt

\title[The explicit minimal resolution constructed from a Macaulay inverse system]{\bf The explicit minimal resolution constructed from\\ a Macaulay inverse system}
\date{\today}

\author[Sabine El Khoury and Andrew R. Kustin]
{Sabine El Khoury and Andrew R. Kustin}

\thanks{AMS 2010 {\em Mathematics Subject Classification}.
Primary 13H10, 13E10, 13D02, 13A02.}

\thanks{The second author was partially supported by the Simons Foundation.}

\thanks{Keywords: Artinian rings, Buchsbaum-Eisenbud ideals, Build resolution directly from inverse system,    Gorenstein rings, Linear presentation, Linear resolution, Macaulay inverse system,   Pfaffians, Resolutions }

\address{Mathematics Department,
American University of Beirut,
Riad el Solh 11-0236,
Beirut,
Lebanon}
\email{se24\@aub.edu.lb}

\address{Department of Mathematics, University of South Carolina,
Columbia, SC 29208} \email{kustin@math.sc.edu}

\begin{abstract} Let $A$ be a standard-graded Artinian Gorenstein algebra of embedding codimension three over a field $\pmb k$. In the generic case, the minimal homogeneous resolution, ${\mathbb G}$, of $A$, by free $\operatorname{Sym}_{\bullet}^{\pmb k}(A_1)$ modules, is Gorenstein-linear.
Fix a basis $x,y,z$ for the ${\pmb k}$-vector space $A_1$. If ${\mathbb G}$ is Gorenstein linear, then the socle degree of $A$ is necessarily even, and, if $n$ is the least index with $\dim_{\pmb k} A_n$ less than $\dim_{\pmb k} {\operatorname{Sym}}_{n}^{\pmb k}(A_1)$, then the socle degree of $A$ is $2n-2$.  Let $$\Phi=\sum\alpha_m m^*,$$as $m$ roams over the monomials in $x,y,z$ of degree $2n-2$, with $\alpha_m\in \pmb k$,  be an arbitrary homogeneous element  of degree $2n-2$ in the divided power module $D_{\bullet}^{\pmb k}(A_1^*)$. 
 The annihilator of $\Phi$ (denoted $\operatorname{ann}\Phi$) is the ideal of elements $f$ in ${\operatorname{Sym}}_{\bullet}^{\pmb k}(A_1)$ with $f(\Phi)=0$. The element  $\Phi$ of $D_{\bullet}^{\pmb k}(A_1^*)$ is the  Macaulay inverse system for the ring ${\operatorname{Sym}}_{\bullet}^{\pmb k}(A_1)/\operatorname{ann}\Phi$, which is necessarily Gorenstein and Artinian.
Consider the matrix $(\alpha_{mm'})$, as $m$ and $m'$ roam over the monomials in $x,y,z$ of degree $n-1$.  The ring ${\operatorname{Sym}}_{\bullet}^{\pmb k}(A_1)/\operatorname{ann}\Phi$
has a Gorenstein-linear resolution if and only if $\det (\alpha_{mm'})\neq 0$.  If $\det (\alpha_{mm'})\neq 0$, then we give explicit formulas for the minimal homogeneous resolution of  ${\operatorname{Sym}}_{\bullet}^{\pmb k}(A_1)/\operatorname{ann}\Phi$ in terms the $\alpha_m$'s and $x,y,z$.
\end{abstract}

\maketitle

For the time being, let $U$ be a vector space of dimension $d$ over a field $\pmb k$,
 $S=\operatorname{Sym}_{\bullet}^{\pmb k}U$ be a standard-graded polynomial ring in $d$ variables over  $\pmb k$, and $D=D_{\bullet}^{\pmb k}(U^*)$ be  
the graded $S$-module of graded $\pmb k$-linear homomorphisms from $S$ to $\pmb k$. In his 1916 paper \cite{M16}, Macaulay proved that
each element $\Phi$ of $D$ determines (in our language) an 
 Artinian Gorenstein ring $A_{\Phi}=S/\operatorname{ann}(\Phi)$; furthermore, each Artinian Gorenstein quotient of $S$ is obtained in this manner. Of course, $\Phi$ determines everything about the quotient $A_{\Phi}$; so in particular,  when $\Phi$ is a homogeneous element of $D$, then $\Phi$ determines a minimal resolution of $A_{\Phi}$ by free $S$-modules. The standard way to find this minimal resolution is to first solve some equations in order to determine a minimal  generating set for $\operatorname{ann}(\Phi)$ and then to use Gr\"obner basis techniques in order to find a minimal resolution of $A_{\Phi}$ by free $S$-modules. We are interested in by-passing all of the intermediate steps. We aim to describe a minimal  resolution of $A_{\Phi}$ directly (and in a polynomial manner) in terms of  the coefficients of $\Phi$, at least in the generic case. In \cite{EKK}, we proved that if $\Phi$ is homogeneous of even degree $2n-2$ and the pairing \begin{equation}\label{pp}S_{n-1}\times S_{n-1}\to \pmb k,\end{equation} given by
$(f,g)\mapsto fg(\Phi)$, is perfect, then a minimal resolution for $A_{\Phi}$ may be read directly, and in a polynomial manner, from the coefficients of $\Phi$. Furthermore, there is one such resolution for each pair $(d,n)$. Please notice that the pairing (\ref{pp}) 
is perfect if and only if the determinant
of the matrix $((m_im_j)\Phi)$, (as $m_i$ and $m_j$ roam over the monomials in $S$ of degree $n-1$), is non-zero. This is an open condition on the coefficients of 
$\Phi$ (which are precisely the values of $m\Phi$ as $m$ roams over the monomials of $S$ of degree $2n-2$); hence 
the pairing (\ref{pp})
is perfect whenever $\Phi$ is chosen generically. Furthermore, the pairing (\ref{pp}) is perfect if and only if the minimal resolution of $A_{\Phi}$ by free $S$-modules is Gorenstein-linear.

The paper \cite{EKK} proves the existence of a unique generic Gorenstein-linear resolution for each pair $(d,n)$; but exhibits this resolution only for the pair $(d,n)=(3,2)$. In the present paper, 
we exhibit this resolution when $d=3$ and $n\ge 2$ is arbitrary. Indeed,
once $n\ge 2$ is fixed, we exhibit an explicit complex $(\mathbb B,b)$ (see Definition~\ref{main-def} or Observation~\ref{b2} or Proposition~\ref{explc}, depending upon your tolerance for, and/or need to see, explicitness). If 
$U$ is a vector space over $\pmb k$ of dimension $d=3$ and $\Phi$ is a generic element of $D_{2n-2}^{\pmb k}(U^*)$, then $S\otimes \mathbb B$ is the minimal resolution of $A_{\Phi}$ by free $S=\operatorname{Sym}_\bullet^{\pmb k}(U)$ modules.

We preview the complex $\mathbb B$. (Complete details are given in Section~\ref{coord-free}.) This complex is built over $\mathbb Z$. Let $U$ be a free $\mathbb Z$-module of rank $3$ and $\mathfrak R$ be the ring  $\operatorname{Sym}_{\bullet}^{\mathbb Z}(U\oplus \operatorname{Sym}_{2n-2}^{\mathbb Z}U)$. (No harm is done by choosing bases $x,y,z$ for $U$ and $\{t_m\mid \text{$m$ is a monomial of degree $2n-2$ in $x$, $y$, $z$}\}$ for $\operatorname{Sym}_{2n-2}^{\mathbb Z}U$ and viewing $\mathfrak R$ as the polynomial ring $\mathbb Z[x,y,z,\{t_m\}]$; although we will not officially choose these bases until Section~\ref{mat-desc}. Until Section~\ref{mat-desc}, we will keep the calculation as coordinate-free as possible.) In the complex $\mathbb B$, one of the basis elements of $U$ (we call this element $x$) is given a distinguished role. The complex $\mathbb B$ is symmetric in the complementary basis elements (we call these elements $y$ and $z$) of $U$. Let $U_0$ be the free $\mathbb Z$-summand $\mathbb Z y\oplus \mathbb Z z$ of $U=\mathbb Zx\oplus \mathbb Zy\oplus \mathbb Z z$. The complex $\mathbb B$ is
$$\xymatrix{0\ar[r]&\mathfrak R\ar[r]& B_2\ar[r]^{b_2}&B_2^*\ar[r]&\mathfrak R,}$$
with $B_2=\mathfrak R\otimes_{\mathbb Z} (\operatorname{Sym}_{n-1}^{\mathbb Z}U_0 \oplus D_{n}^{\mathbb Z}(U_0^*))$. The presenting matrix $b_2$ is induced by an $\mathfrak R$-module homomorphism
$$\mathfrak b: {\textstyle \bigwedge^2_{{\mathfrak R}}} B_2\to {\mathfrak R}$$with
$$
\mathfrak b((\mu+\nu)\wedge (\mu'+\nu'))=\begin{cases}\phantom{+}x\cdot [\beta_3(\mu\wedge \mu')+\beta_2(\mu\otimes\nu')-\beta_2(\mu'\otimes\nu)+\beta_1(\nu\wedge \nu')]\vspace{5pt}\\+y\cdot \pmb \delta\cdot ( [z\mu](\nu')-[z\mu'](\nu))-z\cdot \pmb \delta\cdot ([y\mu](\nu')-[y\mu'](\nu)),\end{cases}$$
for $\mu,\mu'$ in ${\operatorname{Sym}}_{n-1}^{\mathbb Z}U_0$, $\nu,\nu'$ in $D_n^{\mathbb Z}(U_0^*)$, where the $\mathbb Z$-module homomorphisms $$\begin{array}{rlll}
\beta_1:&\bigwedge^2_{{\mathbb Z}}( D_{n}^{\mathbb Z}(U_0^*)) &\to& {\mathfrak R}_{0,*},\vspace{5pt}\\
\beta_2:&{\operatorname{Sym}}_{n-1}^{\mathbb Z}U_0\otimes_{{\mathbb Z}} D_{n}^{\mathbb Z}(U_0^*) &\to& {\mathfrak R}_{0,*}, \text{ and}\vspace{5pt}\\
\beta_3:& \bigwedge^2_{{\mathbb Z}}( {\operatorname{Sym}}_{n-1}^{\mathbb Z}U_0) &\to& {\mathfrak R}_{0,*}\end{array}$$ all are defined in terms of the classical adjoint of the map
\begin{equation}\label{him}{\operatorname{Sym}}_{n-1}^{\mathbb Z}U \to \mathfrak R_{0,*}\otimes_{\mathbb Z} D_{n-1}^{\mathbb Z}(U^*),\end{equation}
which is induced by the $\mathbb Z$-analogue of  (\ref{pp}), the determinant, $\pmb\delta$, of (\ref{him}), and the element 
${\widetilde{\Phi}}$  of $D_{2n-1}^{\mathbb Z}(U^*)$ with $x({\widetilde{\Phi}})=\Phi$ in $D_{2n-2}^{\mathbb Z}(U^*)$ and $\mu({\widetilde{\Phi}})=0$ for all $\mu\in {\operatorname{Sym}}_{2n-1}^{\mathbb Z}(U_0)$. (The ring $\mathfrak R_{0,*}$ is the subring 
$\operatorname{Sym}_{\bullet}^{\mathbb Z}(\operatorname{Sym}_{2n-2}^{\mathbb Z}U)$
of $\operatorname{Sym}_{\bullet}^{\mathbb Z}(U\oplus \operatorname{Sym}_{2n-2}^{\mathbb Z}U)=\mathfrak R$.)
We remark that the homomorphism $b_2$, which presents a generic grade $3$ Gorenstein ideal, is an alternating homomorphism, as is predicted by Buchsbaum and Eisenbud \cite{BE77}. However, our calculations never used the Buchsbaum-Eisenbud Theorem; and therefore, they provide an alternate proof of the Buchsbaum-Eisenbud Theorem for linearly presented grade three Gorenstein ideals. 

Section~\ref{conv} contains the conventions and notation that are used in the paper. Section~\ref{coord-free} is a complete and careful description of the maps and modules of $(\mathbb B,b)$. In Section~\ref{EKK} we recall the relevant results from \cite{EKK}. In particular, the complex $(\mathbb G,g)$ of Theorem~\ref{old-mvd} has all of the desired properties, except, one is not able to answer the (very basic) questions, ``What \underline{exactly} are $G_1$, $G_2$, $g_1$, and $g_2$?''
The main result of the present paper is that the explicitly constructed complex $(\mathbb B,b)$ has all of the properties of the complex $(\mathbb G,g)$. This result is stated as Theorem~\ref{main} and/or 
Lemma~\ref{main-lemma}.\ref{main-lemma-g}. The proof of Theorem~\ref{main} is carried out in Section~\ref{main-theorem}. Ultimately, in Lemma~\ref{main-lemma}.\ref{main-lemma-f}, we produce an isomorphism of complexes $\tau:(\mathbb B,b)\to (\mathbb E,e)$, where $(\mathbb E,e)$ is a sub-complex of $(\mathbb G,g)$ with 
$(\mathbb E,e)_{\pmb \delta}=(\mathbb G,g)_{\pmb \delta}$. In Section~\ref{mat-desc} we describe the homomorphisms of $(\mathbb B,b)$ in terms of the elements of the bi-graded polynomial ring $\mathbb Z[x,y,z,\{t_m\}]$ explicitly; the word ``induced'' does not appear in the section. The bi-homogeneous form of $(\mathbb B, b)$ is given just before Remark~\ref{10.1}. Section~\ref{ex} contains some explicit specializations of the generic complex $(\mathbb B,b)$; these examples are related to the project~\ref{P4}, which is described below.

There are numerous interesting projects which are related to the present project. We hope that the techniques and insights from the present project will lead to progress on these related projects.

\begin{chunk} We would like to find an explicit version of the resolution of \cite{EKK} for all values of $d=\operatorname{rank}_{\mathbb Z} U$. Indeed, an explicit version of this resolution would be very interesting even  when $d=4$. This resolution is a complete structure theorem for codimension four Artinian Gorenstein rings with Gorenstein-linear resolutions. It would be nice to know what the resolution  is in addition to knowing that the resolution  exists. Furthermore, we wonder how Miles Reid captures this resolution in his program \cite{R} for resolving  codimension four Gorenstein quotients.\end{chunk}

\begin{chunk}\label{P2} Can we prove a version of \cite[Thm.~6.15]{EKK} with the hypothesis that $A_{\Phi}$ has a Gorenstein-linear resolution replaced by the hypothesis that $A_{\Phi}$ is a compressed algebra? Tony Iarrobino \cite{I84} initiated the use of the word ``compressed'' to describe 
Artinian Gorenstein $\pmb k$-algebras which have the largest total length among all Artinian Gorenstein $\pmb k$-algebras with the specified embedding codimension and the specified socle degree. The set of Artinian Gorenstein algebras with a linear resolution is a proper subset of the set of compressed Artinian Gorenstein algebras. In some sense, this question
asks for the generic resolution of standard-graded Artinian Gorenstein algebras with odd socle degree.
Marilina Rossi and Liana \c{S}ega have recently written a remarkable paper \cite{RS} in which they prove that the Poincar\'e series of every finitely generated module over every local Artinian Gorenstein compressed algebra is rational provided the socle degree is not $3$. (The restriction on the socle degree is clearly needed because B{\o}gvad's examples \cite{Bo} of  Gorenstein rings with  transcendental Poincar\'e series  are compressed Artinian rings with socle degree equal to $3$). The Rossi-\c{S}ega theorem is especially remarkable because  so many of the usual tools for proving that Poincar\'e series are rational are not available to them. In particular, they do not know the minimal $R$-resolution of $R/I$ and they do not know if the minimal $R$-resolution of $R/I$ is an associative   DG-algebra. Rossi and \c{S}ega independently suggested to us that the project~\ref{P2} might be a plausible generalization of \cite[Thm.~6.15]{EKK}.\end{chunk}

\begin{chunk} Lucho Avramov asked  us ``Is the resolution of \cite{EKK} an associative DG-algebra?'' (This question is interesting only when $d\ge 5$.) If the answer is yes, it would help explain (and possibly simplify the proof of) the  Rossi-\c{S}ega Theorem. Our present thinking is that it might be possible to record such a pretty version of this resolution, for all values of $d$, that explicit formulas for multiplication on the resolution can be given.\end{chunk}

\begin{chunk}\label{P4} What is the 
the orbit space of $\operatorname{GL}_3\pmb k\times \operatorname{GL}_{2n+1}\pmb k$ acting on the space of $(2n+1)\times (2n+1)$ alternating matrices with homogeneous linear entries from the ring $\pmb k[x,y,z]$? This is the question which lead to \cite{EKK} and further comments about this question are contained in \cite{EKK}. Also, we return to this question in Section~\ref{ex}.
This question is of interest because there is much recent work concerning  the  equations
that define the Rees algebra of   ideals which are primary to the maximal ideal; see, for example, 
\cite{HSV,CHW,Bu,CKPU}. The driving force behind this work is the desire to understand the singularities of parameterized curves or surfaces; see \cite{SCG,CWL,BB,Bot,BD'A} and especially \cite{CKPU}. One of the key steps in \cite{CKPU} is the decomposition of the space of $3\times 2$ matrices with homogeneous entries from $\pmb k[x,y]$ of a fixed degree into disjoint orbits under the action of $\operatorname{GL}_3 \pmb k\times \operatorname{GL}_2\pmb k$. A successful answer to question (\ref{P4}) would have an immediate interpretation in terms of the defining equations of Rees algebras. Eventually, the Rees algebra result would have an interpretation in terms of singularities on parameterized surfaces.\end{chunk}

\section{Conventions}\label{conv}

If $R$ is a graded ring, then a homogeneous   complex of free $R$-modules  is  {\it Gorenstein-linear} if it has the form 
$$0\to R(-2n-t+2)^{s_{t}}\xrightarrow{\ d_{t}\ } R(-n-t+2)^{s_{t-1}}\xrightarrow{\ d_{t-1}\ }\dots\xrightarrow{\ d_{3}\ } R(-n-1)^{s_2}\xrightarrow{\ d_{2}\ } R(-n)^{s_1}\xrightarrow{\ d_{1}\ }R^{s_0},$$
for some integers $n$, $t$, and $s_i$. In other words, all of  the entries in all of the matrices $d_i$, except the first matrix and the last matrix, are homogeneous linear forms; and all of the entries in the first and last matrices are homogeneous forms of the same degree.

A graded ring $R=\bigoplus_{0\le i}R_i$ is called {\it standard-graded over} $R_0$, if $R$ is generated as an $R_0$-algebra by $R_1$ and $R_1$ is finitely generated as an $R_0$-module.

\begin{conventions}\label{conventions}${}$Let $U$ be a free $\mathbb Z$-module of rank $3$ and let $x$, $y$, $z$ be a basis for $U$.
\begin{enumerate}[\rm(a)]
\item For any set of variables $\{x_1,\dots,x_r\}$ and any degree $s$, we write
$\binom{x_1,\dots,x_r} s$ for   the set of monomials of degree $s$ in the variables $x_1,\dots,x_r$.

\item  We always  think of $x_1$ as $x$, $x_2$ as $y$, and $x_3$ as $z$.

\item If $S$ is a statement then 
$$\chi(S)=\begin{cases} 1,&\text{if $S$ is true,}\vspace{5pt}\\0,&\text{if $S$ is false.}\end{cases}$$

\item If $m$ is a monomial in the variables $x,y,z$, then $x|m$ is the statement ``$x$ divides $m$''.

\item We make much use of the fact that $D_{\bullet}^{\mathbb Z}(U^*)$ is a ${\operatorname{Sym}}_{\bullet}^{{\mathbb Z}}U$-module. In particular, if $\mu$ and $\mu'$ are in the ring ${\operatorname{Sym}}_{\bullet}^{{\mathbb Z}}U$ and $\nu$ is in the module  $D_{\bullet}^{\mathbb Z}(U^*)$, then
\begin{equation}\label{mod-act}\mu(\mu'(\nu))=(\mu\mu')(\nu)=(\mu'\mu)(\nu) = \mu'(\mu(\nu))\quad \in D_{\bullet}^{\mathbb Z}(U^*).\end{equation}
If $\mu\in {\operatorname{Sym}}_{i}^{{\mathbb Z}}U$ and $\nu\in D_{j}^{\mathbb Z}(U^*)$, then
$$\mu(\nu)\in D_{j-i}^{\mathbb Z}(U^*).$$
Furthermore, if $\mu$ and $\nu$ are homogeneous of the same degree, then \begin{equation}\label{flip-flop}\mu(\nu)=\nu(\mu)\quad \in {\mathbb Z}.\end{equation}

\item\label{conv-f} If $m$ is the monomial $x^ay^bz^c$ of ${\operatorname{Sym}}_{N}^{\mathbb Z}U$, then $m^*$ is defined to be the element ${x^*}^{(a)}{y^*}^{(b)}{z^*}^{(c)}$ of $D_{N}^{{\mathbb Z}}(U^*)$. The module action of ${\operatorname{Sym}}_{\bullet}^{{\mathbb Z}}U$ on $D_{\bullet}^{{\mathbb Z}}(U^*)$ makes 
$\{ m^*\mid m\in \binom{x,y,z}{N}\}$ be the ${{\mathbb Z}}$-module basis for the free ${\mathbb Z}$-module $D_{N}^{{\mathbb Z}}(U^*)$ which is dual to the ${{\mathbb Z}}$-module basis $\binom{x,y,z}{N}$ of ${\operatorname{Sym}}_{N}^{{\mathbb Z}}U$. (More information about divided power modules may be found, for example,  in \cite[subsect.~1.3]{EKK}.) Notice that if $m\in \binom{x,y,z}{N}$ for some $N$ and $x_i\in \{x,y,z\}$, then the module action of ${\operatorname{Sym}}_\bullet^{{\mathbb Z}} U$ on $
D_{\bullet}^{{\mathbb Z}}(U^*)$ gives \begin{equation}\label{xim} \textstyle x_i(m^*)=\chi(x_i|m)(\frac{m}{x_i})^*\in D_{N-1}^{{\mathbb Z}}(U^*).\end{equation}

\item In the present paper we have no need to consider the algebra structure of the Divided Power Algebra $D_{\bullet}^{{\mathbb Z}}(U^*)$; so, in particular, other than in (\ref{conv-f}) above, we will never write $\nu\nu'$ with $\nu$ and $\nu'$ in $D_{\bullet}^{{\mathbb Z}}(U^*)$. However, we will often write $(xm)^*\in D_{N+1}^{\mathbb Z}(U^*)$ for some monomial $m\in \binom{x,y,z}{N}$. Notice that $$x((xm)^*)=m\quad \text{and}\quad y((xm)^*)=\chi(y|m)\textstyle (x\frac my)^* \quad\text{in }D_{N}^{{\mathbb Z}}(U^*).$$

\end{enumerate}
\end{conventions}

\begin{summary}
Let $A$ be a standard-graded, Artinian, Gorenstein algebra over a field $\pmb k$. In the generic case, the minimal homogeneous resolution, ${\mathbb G}$, of $A$ by free $\operatorname{Sym}_{\bullet}^{\pmb k}(A_1)$ modules is Gorenstein-linear.
Fix a basis $x,y,z$ for the ${\pmb k}$-vector space $A_1$. If ${\mathbb G}$ is Gorenstein-linear, then the socle degree of $A$ is necessarily even, and, if $n$ is the least index with $\dim_{\pmb k} A_n<\dim_{\pmb k} {\operatorname{Sym}}_{n}^{\pmb k}(A_1)$, then the socle degree of $A$ is $2n-2$. (See, for example, \cite[Prop.~1.8]{EKK}.) Let $$\Phi=\sum_{m\in \binom{x,y,z}{2n-2}} \alpha_m m^*$$ be an arbitrary homogeneous element of the divided power module $D_{\bullet}^{\pmb k}(A_1^*)$ of degree $2n-2$. 
 The annihilator of $\Phi$ (denoted $\operatorname{ann}\Phi$) is the ideal of elements $f$ in ${\operatorname{Sym}}_{\bullet}^{\pmb k}(A_1)$ with $f(\Phi)=0$. The element  $\Phi$ of $D_{\bullet}^{\pmb k}(A_1^*)$ is the {\it Macaulay inverse system} for the ring ${\operatorname{Sym}}_{\bullet}^{\pmb k}(A_1)/\operatorname{ann}\Phi$, which is necessarily Gorenstein and Artinian.
Fix an order for the set $\binom{x,y,z}{n-1}$ and consider the matrix $(\alpha_{mm'})$ as $m$ and $m'$ roam over $\binom{x,y,z}{n-1}$ in the prescribed order. The ring ${\operatorname{Sym}}_{\bullet}^{\pmb k}(A_1)/\operatorname{ann}\Phi$
has a Gorenstein-linear resolution if and only if $\det (\alpha_{mm'})\neq 0$. (See, for example \cite[Prop.~1.8]{EKK}.) If $\det (\alpha_{mm'})\neq 0$, then in Theorem \ref{main} (see also, Proposition~\ref{explc} and Section~\ref{ex}) we give explicit formulas for the minimal resolution of  ${\operatorname{Sym}}_{\bullet}^{\pmb k}(A_1)/\operatorname{ann}\Phi$ in terms the $\alpha_m$'s and $x,y,z$.\end{summary}

\section{The coordinate-free version of $\mathbb B$.}\label{coord-free}

The main results of this paper concern the sequence of homomorphisms that we call $(\mathbb B,b)$. These homomorphisms are defined in a coordinate free manner in Definition~\ref{main-def}. A more explicit version of $(\mathbb B,b)$ is given in Proposition~\ref{explc}. 

The basic data is  given in \ref{Opening-Data}. All of $(\mathbb B,b)$  is made out of  Data \ref{Opening-Data}. There are two intermediate steps, Data \ref{manuf-data} and Data \ref{data-3}, where various maps and elements are created using the basic data of \ref{Opening-Data}, before the coordinate-free version of $(\mathbb B,b)$ is given in Definition \ref{main-def}. 
\begin{data}\label{Opening-Data} Let $U$ be a free $\mathbb Z$-module of rank $3$ and $n\ge 2$ be an integer.  

\begin{enumerate}[(a)]
\item Define ${\mathfrak R}$ to be the bi-graded ring
${\mathfrak R}={\operatorname{Sym}}_{\bullet}^{\mathbb Z}(U\oplus {\operatorname{Sym}}_{2n-2}^{\mathbb Z}U)$, where $$U\oplus 0={\mathfrak R}_{(1,0)}\quad\text{and}\quad 0\oplus{\operatorname{Sym}}_{2n-2}^{\mathbb Z}U={\mathfrak R}_{(0,1)}.$$ 

\item Define $\Psi:{\operatorname{Sym}}_{\bullet}^{\mathbb Z}U\to {\mathfrak R}$ to be the $\mathbb Z$-algebra homomorphism which is induced by the inclusion 
$$U={\mathfrak R}_{(1,0)}\subseteq {\mathfrak R}.$$ 

\item Define $\Phi: {\operatorname{Sym}}_{2n-2}^{\mathbb Z}U\to {\mathfrak R}$ to be the $\mathbb Z$-module homomorphism 
$${\operatorname{Sym}}_{2n-2}^{\mathbb Z}U=R_{(0,1)}\subseteq {\mathfrak R}.$$
\end{enumerate}
\end{data}

\begin{remark}\label{Phi} We may think of $\Phi$ as an element of ${\mathfrak R}\otimes_{\mathbb Z} D_{2n-2}^{\mathbb Z}(U^*)$. Indeed, if $(\{m_i\},\{m_i^*\})$ is any pair of dual bases for 
${\operatorname{Sym}}_{2n-2}^{\mathbb Z} U$ and $D_{2n-2}^{\mathbb Z}(U^*)$, respectively, then $\Phi$ and $\sum_{i} \Phi(m_i)\otimes m_i^*$ represent the same $\mathbb Z$-module homomorphism ${\operatorname{Sym}}_{2n-2}^{\mathbb Z}U\to {\mathfrak R}$. \end{remark}

The first collection of objects that we manufacture using the data from \ref{Opening-Data} all involve the symmetric pairing $${\operatorname{Sym}}_{n-1}^{\mathbb Z} U\otimes_{\mathbb Z} {\operatorname{Sym}}_{n-1}^{\mathbb Z} U
\xrightarrow{\text{multiplication}}
{\operatorname{Sym}}_{2n-2}^{\mathbb Z} U\stackrel{\Phi}{\longrightarrow} {\mathfrak R}.$$

\begin{data}\label{manuf-data} Retain the data of \ref{Opening-Data}. Let  $\mathrm{top}$ represent $\binom{n+1}2$, which is the rank of the free $\mathbb Z$-module ${\operatorname{Sym}}_{n-1}^{\mathbb Z}U$, and let $\Theta$ be a basis element for the rank one free $\mathbb Z$-module $\bigwedge^\mathrm{top}_{\mathbb Z} ({\operatorname{Sym}}_{n-1}^{\mathbb Z}U)$.
\begin{enumerate}[(a)]
\item\label{8.19.d} Define $\mathbb Z$-module homomorphisms $$P:{\operatorname{Sym}}_2^{\mathbb Z}({\operatorname{Sym}}_{n-1}^{\mathbb Z} U)\to {\mathfrak R}\quad \text{and} \quad p:{\operatorname{Sym}}_{n-1}^{\mathbb Z}U\to {\mathfrak R}\otimes_{\mathbb Z} D_{n-1}^{\mathbb Z} (U^*)$$ by
\begin{equation}\label{Pp}P(\mu\otimes\mu')=\Phi(\mu\mu')=[p(\mu)](\mu'),\end{equation}
for $\mu$ and $\mu'$ in ${\operatorname{Sym}}_{n-1}^{\mathbb Z} U$. 
\item\label{8.19.b} 
Define the element  $\pmb \delta$ of ${\mathfrak R}$ by  
$$\pmb \delta= [{\textstyle (\bigwedge^\mathrm{top} p)}(\Theta)](\Theta)\in {\mathfrak R}_{(0,\mathrm{top})}.$$ 
(It is reasonable to call $\pmb \delta$  ``the determinant''  of $p$.) 

\item\label{8.19.c}  Define the $\mathbb Z$-module homomorphism $q: D_{n-1}^{\mathbb Z}U^* \to {\mathfrak R}\otimes_{\mathbb Z}{\operatorname{Sym}}_{n-1}^{{\mathbb Z}}U$ by
$$\textstyle q(\nu)= [(\bigwedge^{\mathrm{top}-1}_{{\mathfrak R}}p)(\nu(\Theta))](\Theta) $$
for $\nu\in D_{n-1}U^*$. (It is reasonable to call $q$  ``the classical adjoint''  of $p$.) 

\item\label{manuf-data-d} Define the $\mathbb Z$-module homomorphism $\mathfrak Q: D_{n-1}^{\mathbb Z}U^* \otimes_{\mathbb Z} D_{n-1}^{\mathbb Z}U^*
\to {\mathfrak R}$ by $$\mathfrak Q(\nu\otimes \nu')= [q(\nu)](\nu'),$$ for $\nu$ and $\nu'$ in $D_{n-1}^{\mathbb Z}U^*$.

\end{enumerate}
\end{data}

\begin{remark}\label{square} The basis element $\Theta\otimes\Theta$ of the rank one free $\mathbb Z$-module $${\textstyle (\bigwedge^\mathrm{top}_{\mathbb Z} {\operatorname{Sym}}_{n-1}^{\mathbb Z}U)\otimes_{\mathbb Z} (\bigwedge^\mathrm{top} {\operatorname{Sym}}_{n-1}^{\mathbb Z}U)}$$ is uniquely determined because every unit in $\mathbb Z$ squares to one. This basis element appears in  (\ref{8.19.b}) and (\ref{8.19.c}) of Data \ref{manuf-data}. We conclude that Data \ref{manuf-data} has been described in a completely coordinate-free manner.
\end{remark}

We record a  list of obvious, but very useful, statements about the data of \ref{manuf-data}.
\begin{observation}\label{obvious}Adopt the data of {\rm \ref{Opening-Data}} and {\rm\ref{manuf-data}}. The following statements hold.
\begin{enumerate}[\rm(a)]

\item\label{obvious-a} If $\mu$ is in ${\operatorname{Sym}}_{n-1}^{{\mathbb Z}}U$, then $p(\mu)=\mu(\Phi)$ in ${\mathfrak R}\otimes_{{\mathbb Z}}D_{n-1}^{{\mathbb Z}}(U^*)$.

\item\label{obvious-aa} If $\mu$ and $\mu'$ are in ${\operatorname{Sym}}_{n-1}^{\mathbb Z}U$, then $[p(\mu)](\mu')=[p(\mu')](\mu)$ in  ${\mathfrak R}$.

\item\label{obvious-b} If $\mu$ is in ${\operatorname{Sym}}_{n-1}^{\mathbb Z}U$, then $q(p(\mu))=\pmb \delta\otimes \mu$ 
in ${\mathfrak R}\otimes_{{\mathbb Z}}{\operatorname{Sym}}_{n-1}^{\mathbb Z}U$.

\item\label{obvious-c} If $\nu$ is in $D_{n-1}^{\mathbb Z}U^*$, then 
$p(q(\nu))=\pmb \delta\otimes \nu$ in ${\mathfrak R}\otimes_{{\mathbb Z}}D_{n-1}^{{\mathbb Z}}(U^*)$.

\item\label{obvious-e} If $\mu$ is in ${\operatorname{Sym}}_{n-1}^{\mathbb Z}U$ and $\nu$ is in $D_{n-1}^{\mathbb Z}U^*$,
then 
$\mathfrak Q(p(\mu)\otimes \nu)=\pmb \delta \cdot \mu(\nu)$  in ${\mathfrak R}$.

\item\label{obvious-d} If $\nu$ is in $D_{n-1}^{\mathbb Z}U^*$ and $\mu$ is in ${\operatorname{Sym}}_{n-1}^{\mathbb Z}U$, then $\mathfrak Q(\nu\otimes p(\mu))=\pmb \delta\cdot \nu(\mu)$  in ${\mathfrak R}$.

\item\label{obvious-f} If $\nu$ and $\nu'$ are in $D_{n-1}^{\mathbb Z}(U^*)$, then 
 $\mathfrak Q(\nu\otimes \nu')=\mathfrak Q(\nu'\otimes \nu)$ in ${\mathfrak R}$.

\item\label{obvious-h} If $\nu$ and $\nu'$ are in $D_{n-1}^{\mathbb Z}(U^*)$, then $[q(\nu)](\nu')=[q(\nu')](\nu)$  in ${\mathfrak R}$.
\end{enumerate} 
\end{observation}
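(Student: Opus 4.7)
The strategy is to view Observation~\ref{obvious} as a coordinate-free packaging of the classical adjoint identities for $p$, regarded as a symmetric ${\mathfrak R}$-linear endomorphism (after identifying source and target via dual bases) of a free ${\mathfrak R}$-module of rank $\mathrm{top}$, together with a modest amount of unwinding of the pairings defined in Data~\ref{manuf-data}.

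First I would dispatch (\ref{obvious-a}) and (\ref{obvious-aa}) directly. For (\ref{obvious-a}), the computation $[p(\mu)](\mu') = \Phi(\mu\mu') = (\mu\mu')(\Phi) = \mu'(\mu(\Phi))$ shows that $p(\mu)$ and $\mu(\Phi)$ agree as elements of ${\mathfrak R} \otimes_{\mathbb Z} D_{n-1}^{\mathbb Z}(U^*)$ under the perfect pairing against ${\operatorname{Sym}}_{n-1}^{\mathbb Z} U$, hence coincide; (\ref{obvious-aa}) follows from the commutativity $\mu\mu' = \mu'\mu$ in ${\operatorname{Sym}}_\bullet^{\mathbb Z}U$.

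The remaining items I would handle uniformly by fixing a ${\mathbb Z}$-basis $\{m_i\}$ of ${\operatorname{Sym}}_{n-1}^{\mathbb Z}U$, its dual basis $\{m_i^*\}$ of $D_{n-1}^{\mathbb Z}(U^*)$, and setting $\Theta = m_1 \wedge \cdots \wedge m_{\mathrm{top}}$. Writing $p(m_j) = \sum_i a_{ij}\, m_i^*$ with $a_{ij} = \Phi(m_i m_j)$, part (b) of Data~\ref{manuf-data} gives $\pmb\delta = \det(a_{ij})$, and (\ref{obvious-aa}) says that $(a_{ij})$ is a symmetric matrix over ${\mathfrak R}$. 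Unpacking part (c) of Data~\ref{manuf-data} via the standard interior products on the exterior powers of ${\operatorname{Sym}}_{n-1}^{\mathbb Z}U$ and $D_{n-1}^{\mathbb Z}(U^*)$ yields $q(m_\ell^*) = \sum_i A_{i\ell}\, m_i$, where $A_{i\ell}$ is the $(i,\ell)$-cofactor of $(a_{ij})$; that is, $q$ realizes the classical adjoint of $p$.

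With these identifications in place, each remaining item reduces to standard commutative linear algebra over ${\mathfrak R}$. Items (\ref{obvious-b}) and (\ref{obvious-c}) are the Cramer identities $\operatorname{adj}(A)\,A = A\,\operatorname{adj}(A) = \det(A)\cdot I$, which coincide here because $A$ is symmetric. Item (\ref{obvious-e}) then follows from (\ref{obvious-b}) via $\mathfrak Q(p(\mu)\otimes\nu) = [q(p(\mu))](\nu) = [\pmb\delta \otimes \mu](\nu) = \pmb\delta \cdot \mu(\nu)$, and (\ref{obvious-d}) follows from (\ref{obvious-c}) analogously. Finally, (\ref{obvious-f}) and (\ref{obvious-h}) are literally the same statement by the definition of $\mathfrak Q$, and both express the symmetry of the classical adjoint of a symmetric matrix, which is immediate from the cofactor formula and (\ref{obvious-aa}). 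The main obstacle, such as it is, is the careful bookkeeping needed to verify that the coordinate-free contractions $[(\bigwedge^{\mathrm{top}}p)(\Theta)](\Theta)$ and $[(\bigwedge^{\mathrm{top}-1}p)(\nu(\Theta))](\Theta)$ do in fact expand to the expected determinant and cofactor expressions under the natural interior products; once this translation is secured, no further subtleties remain.
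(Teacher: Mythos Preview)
Your approach is correct and takes a genuinely different route from the paper. The paper proves (\ref{obvious-b}), (\ref{obvious-c}), and especially (\ref{obvious-f}) by coordinate-free manipulations with the interior products between $\bigwedge^\bullet({\operatorname{Sym}}_{n-1}^{\mathbb Z}U)$ and $\bigwedge^\bullet(D_{n-1}^{\mathbb Z}(U^*))$; its proof of (\ref{obvious-f}) in particular is a twelve-line chain of such identities. You instead fix a basis, recognize $p$ as a symmetric matrix $A$ and $q$ as its classical adjoint, and then read off (\ref{obvious-b})--(\ref{obvious-c}) as Cramer's rule and (\ref{obvious-f})/(\ref{obvious-h}) as the elementary fact that the adjugate of a symmetric matrix is symmetric. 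Your argument is shorter and more elementary, especially for (\ref{obvious-f}); the paper's argument never chooses a basis, in keeping with the coordinate-free spirit of Section~\ref{coord-free}. One small point: your claim that ``(\ref{obvious-d}) follows from (\ref{obvious-c}) analogously'' is slightly glib, since $\mathfrak Q(\nu\otimes p(\mu)) = [q(\nu)](p(\mu))$ is not literally $p(q(\nu))$ applied to something; in your matrix setup it unpacks to $v^{T}\operatorname{adj}(A)^{T} A\,u$, so one must also use $\operatorname{adj}(A)^{T}=\operatorname{adj}(A)$ (i.e.\ symmetry, from (\ref{obvious-aa})) before invoking Cramer. This is harmless in your framework, but worth making explicit; the paper, for comparison, derives (\ref{obvious-d}) from (\ref{obvious-e}) and (\ref{obvious-f}).
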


\begin{proof} {\bf(\ref{obvious-a}).} In light of Remark~\ref{Phi}, (\ref{flip-flop}), and (\ref{mod-act}),  $p(\mu)$ and $\mu(\Phi)$ both represent the ${\mathfrak R}$-module homomorphism ${\mathfrak R}\otimes_{{\mathbb Z}}{\operatorname{Sym}}_{n-1}^{{\mathbb Z}}U\to {\mathfrak R}$ which sends $\mu'$ in ${\operatorname{Sym}}_{n-1}^{{\mathbb Z}}U$ to
$$[p(\mu)](\mu')=\Phi(\mu\mu').$$

\smallskip\noindent {\bf(\ref{obvious-aa}).} The assertion    holds because  multiplication in ${\operatorname{Sym}}_{\bullet}^{{\mathbb Z}}U$ is commutative.

\smallskip\noindent {\bf(\ref{obvious-b}).} Observe  that $$\textstyle q(p(\mu))=[(\bigwedge^{\mathrm{top}-1}p)(p(\mu)(\Theta))](\Theta)
= [\mu[(\bigwedge^{\mathrm{top}}p)(\Theta)]](\Theta)=[(\bigwedge^{\mathrm{top}}p)(\Theta)](\Theta)\cdot \mu=\pmb\delta\otimes \mu.$$The second equality used (\ref{obvious-aa}).

\smallskip\noindent {\bf(\ref{obvious-c}).} Observe that $$\textstyle p(q(\nu))=p([(\bigwedge^{\mathrm{top}-1}p)(\nu(\Theta))](\Theta))=
[\nu(\Theta)][(\bigwedge^{\mathrm{top}}p)(\Theta)]= [\Theta[(\bigwedge^{\mathrm{top}}p)(\Theta)]]\cdot \nu
=\pmb\delta\otimes \nu.$$ Again, the second equality used (\ref{obvious-aa}). 
 
\smallskip\noindent {\bf(\ref{obvious-e}).} Observe  that $\textstyle \mathfrak Q(p(\mu)\otimes \nu)=[q(p(\mu))](\nu)=\pmb \delta\cdot \mu(\nu)$
by (\ref{obvious-b}).

\smallskip\noindent {\bf(\ref{obvious-f}).} The interplay between 
the module action of the ring $\bigwedge^{\bullet}_{{\mathbb Z}}({\operatorname{Sym}}_{n-1}^{{\mathbb Z}}U) $ on module $\bigwedge^{\bullet}_{{\mathbb Z}}(D_{n-1}^{{\mathbb Z}}(U^*)) $ and the module action of  the ring $\bigwedge^{\bullet}_{{\mathbb Z}}(D_{n-1}^{{\mathbb Z}}(U^*)) $ on the module $\bigwedge^{\bullet}_{{\mathbb Z}}({\operatorname{Sym}}_{n-1}^{{\mathbb Z}}U) $ yields   $$\textstyle \begin{array}{llll}\mathfrak Q(\nu\otimes \nu')&=&[q(\nu)](\nu')\vspace{5pt}\\&=&
\left[[(\bigwedge^{\mathrm{top}-1}p)(\nu(\Theta))](\Theta)\vphantom{{\widetilde{\Phi}}}\right] (\nu')\vspace{5pt}\\
&=&
\nu'\left[[(\bigwedge^{\mathrm{top}-1}p)(\nu(\Theta))](\Theta)\vphantom{{\widetilde{\Phi}}}\right] 
\vspace{5pt}\\&=&
\left[\nu'\wedge [(\bigwedge^{\mathrm{top}-1}p)(\nu(\Theta))]\vphantom{{\widetilde{\Phi}}}\right](\Theta)
\vspace{5pt}\\&=&
(-1)^{\mathrm{top}-1} \left[[(\bigwedge^{\mathrm{top}-1}p)(\nu(\Theta))]\wedge \nu'\vphantom{{\widetilde{\Phi}}}\right](\Theta) 
\vspace{5pt}\\&=&
(-1)^{\mathrm{top}-1} \left[(\bigwedge^{\mathrm{top}-1}p)(\nu(\Theta))\vphantom{{\widetilde{\Phi}}}\right] [\nu'(\Theta)]\vspace{5pt}\\ &=&
(-1)^{\mathrm{top}-1} \left[(\bigwedge^{\mathrm{top}-1}p)(\nu'(\Theta))\vphantom{{\widetilde{\Phi}}}\right] [\nu(\Theta)] & \text{ by (\ref{obvious-aa})}
\vspace{5pt}\\&=&
(-1)^{\mathrm{top}-1} \left[(\bigwedge^{\mathrm{top}-1}p)(\nu'(\Theta))\wedge \nu\vphantom{{\widetilde{\Phi}}}\right](\Theta)
\vspace{5pt}\\&=&
 \left[\nu\wedge (\bigwedge^{\mathrm{top}-1}p)(\nu'(\Theta))\vphantom{{\widetilde{\Phi}}}\right](\Theta)
\vspace{5pt}\\&=&
\nu \left[[ (\bigwedge^{\mathrm{top}-1}p)(\nu'(\Theta))](\Theta)\vphantom{{\widetilde{\Phi}}}\right]
\vspace{5pt}\\&=&
 \left[[ (\bigwedge^{\mathrm{top}-1}p)(\nu'(\Theta))](\Theta)\vphantom{{\widetilde{\Phi}}}\right](\nu)\vspace{5pt}\\
&=&[q(\nu')](\nu)=\mathfrak Q(\nu'\otimes \nu).\end{array}$$

\smallskip \noindent Assertions (\ref{obvious-d}) and (\ref{obvious-h}) now follows from (\ref{obvious-e}) and 
(\ref{obvious-f}).
\end{proof}

In our description of $(\mathbb B,b)$ in Definition \ref{main-def}, it is not necessary to name a complete basis for $U$; but our description does make use of a distinguished minimal generator ``$x$'' of $U$. (In other words, if the free $\mathbb Z$-module $U$ has basis $x,y,z$, then the maps and modules of $\mathbb B$ treat the basis vector $x$ differently than they treat $y$ and $z$; but  $\mathbb B$ is symmetric in $y$ and $z$.) The second collection of data which is manufactured from the basic data of \ref{Opening-Data} makes use of the distinguished element $x$.

\begin{data}\label{data-3} Adopt the data of \ref{Opening-Data}. Decompose the rank $3$ free $\mathbb Z$-module $U$ as $\mathbb Z x \oplus U_0$ for some element $x$ of $U$ and some rank $2$ free submodule $U_0$ of $U$. Let ${\widetilde{\Phi}}$ be the element of $D_{2n-1}^{\mathbb Z}(U^*)$ with $x({\widetilde{\Phi}})=\Phi$ in $D_{2n-2}^{\mathbb Z}(U^*)$ and $\mu({\widetilde{\Phi}})=0$ for all $\mu\in {\operatorname{Sym}}_{2n-1}^{\mathbb Z}(U_0)$.
\end{data}

An explicit version of ${\widetilde{\Phi}}$ may be found in (\ref{superPhi}). 

\smallskip It is possible, and not difficult, to phrase  Definition \ref{main-def} using a basis for $\bigwedge^2_{\mathbb Z} U_0$ instead of an   explicit basis $y,z$ for $U_0$. On the other hand, we have chosen to use the explicit basis $y,z$. The interested reader can easily re-write \ref{main-def} in terms of a basis for $\bigwedge^2_{\mathbb Z} U_0$.
\begin{definition}\label{main-def} Adopt the data of {\rm\ref{Opening-Data}, \ref{manuf-data}}, and {\rm\ref{data-3}}. Let $y, z$ be a basis for $U_0$.
\begin{enumerate}[\rm (a)]
\item\label{main-def-a} Define the $\mathbb Z$-module homomorphisms
$$\begin{array}{rlll}
\beta_1:&\bigwedge^2_{{\mathbb Z}}( D_{n}^{\mathbb Z}(U_0^*)) &\to& {\mathfrak R},\vspace{5pt}\\
\beta_2:&{\operatorname{Sym}}_{n-1}^{\mathbb Z}U_0\otimes_{{\mathbb Z}} D_{n}^{\mathbb Z}(U_0^*) &\to& {\mathfrak R}, \text{ and}\vspace{5pt}\\
\beta_3:& \bigwedge^2_{{\mathbb Z}}( {\operatorname{Sym}}_{n-1}^{\mathbb Z}U_0) &\to& {\mathfrak R}\end{array}$$
 by 
$$\begin{array}{lll}
\beta_1(\nu\wedge \nu')&=&\Psi(x)\cdot [\mathfrak Q(z(\nu)\otimes y(\nu'))- \mathfrak Q(y(\nu)\otimes z(\nu'))]\vspace{5pt}\\
\beta_2(\mu\otimes \nu)&=&\Psi(x)\cdot [\mathfrak Q((z\mu)({\widetilde{\Phi}})\otimes y(\nu))-\mathfrak Q((y\mu)({\widetilde{\Phi}})\otimes z(\nu))]\vspace{5pt}\\
\beta_3(\mu\wedge \mu')&=& \Psi(x)\cdot [\mathfrak Q((z\mu)({\widetilde{\Phi}})\otimes (y\mu')({\widetilde{\Phi}}))-\mathfrak Q((y\mu)({\widetilde{\Phi}})\otimes (z\mu')({\widetilde{\Phi}}))],\end{array}$$
for $\mu$ and $\mu'$ in ${\operatorname{Sym}}_{n-1}^{\mathbb Z}U_0$ and $\nu$ and $\nu'$ in $D_{n}^{\mathbb Z}(U_0^*)$.
\item\label{main-def-b} Define the free ${\mathfrak R}$-module $B_2$ and the ${\mathfrak R}$-module homomorphisms 
$$\mathfrak b: {\textstyle \bigwedge^2_{{\mathfrak R}}} B_2\to {\mathfrak R}\quad\text{and}\quad b_2: B_2\to B_2^*$$by
$$\begin{array}{rcl}B_2&=&  {\mathfrak R}\otimes_{\mathbb Z} ({\operatorname{Sym}}_{n-1}^{\mathbb Z}U_0\oplus D_n^{\mathbb Z}(U_0^*)),\vspace{5pt}\\
\mathfrak b((\mu+\nu)\wedge (\mu'+\nu'))&=&\begin{cases}\phantom{+}\beta_3(\mu\wedge \mu')+\beta_2(\mu\otimes\nu')-\beta_2(\mu'\otimes\nu)+\beta_1(\nu\wedge \nu')\vspace{5pt}\\+\Psi(y)\cdot \pmb \delta\cdot ( [z\mu](\nu')-[z\mu'](\nu))-\Psi(z)\cdot \pmb \delta\cdot ([y\mu](\nu')-[y\mu'](\nu)),\end{cases}\vspace{5pt}\\
&&\text{for $\mu,\mu'$ in ${\operatorname{Sym}}_{n-1}^{\mathbb Z}U_0$, $\nu,\nu'$ in $D_n^{\mathbb Z}(U_0^*)$, and}\vspace{5pt}
\\ 

 [b_2(\theta_2)](\theta_2')&=&\mathfrak b(\theta_2\wedge \theta_2'),\quad \text{for $\theta_2$ and $\theta_2'$ in $B_2$.} \end{array}$$ 

\item \label{main-def-c} Define $\mathbb B$ to be the  
sequence of free ${\mathfrak R}$-modules and ${\mathfrak R}$-module homomorphisms:

$$(\mathbb B,b):\quad 0\to B_3\xrightarrow{\ b_3\ } B_2 \xrightarrow{\ b_2\ } B_1\xrightarrow{\ b_1\ } B_0,$$
with $B_3=B_0={\mathfrak R}$, $B_2$ equal to the module described in (b),  $B_1=B_2^*$,

$$\begin{array}{ll}b_1(\nu)=\Psi(x)\cdot \Psi(q(\nu)),&\text{for $\nu\in D_{n-1}^{\mathbb Z}(U_0^*)$},\vspace{5pt}\\  
b_1(\mu)=\pmb \delta\cdot  \Psi(\mu)-\Psi(x)\cdot \Psi(q(\mu({\widetilde{\Phi}}))),&\text{for $\mu\in {\operatorname{Sym}}_n^{\mathbb Z}U_0$},\end{array}$$  $b_2$ equal to the homomorphism  described in (b), and $b_3(1)=b_1$.
\end{enumerate}\end{definition}

\begin{remarks}${}$\begin{enumerate}[(a)]\item Notice that $B_1=   {\mathfrak R}\otimes_{\mathbb Z} (D_{n-1}^{\mathbb Z}(U_0^*)\oplus {\operatorname{Sym}}_n^{\mathbb Z}U_0)$. \item We have written ``$\cdot$'' to emphasize that the multiplication is ordinary multiplication in the polynomial ring ${\mathfrak R}$.
\item An alternate description of the homomorphism $b_2$ is given in Observation~\ref{b2}.
\item A more explicit version of $(\mathbb B,b)$ is given in Proposition~\ref{explc}. 
\item Examples of specializations of $\mathbb B$ are given in Section~\ref{ex}.
\item The bi-homogeneous form of $(\mathbb B, b)$ is given just before Remark~\ref{10.1}.\end{enumerate}\end{remarks}

\section{The description of ${\mathbb G}$ as given in \cite{EKK}.}\label{EKK}

The following data has been taken from \cite{EKK}, especially Section 6. We employ  Conventions~\ref{conventions}.

\begin{data}\label{Mar-5-2014} Adopt  Data \ref{Opening-Data}. 
\begin{enumerate}[(a)]\item Let $L_{a,b}$ and $K_{a,b}$ represent the free $\mathbb Z$-modules $L_{a,b}^{\mathbb Z}U$
and $K_{a,b}^{\mathbb Z}U$, respectively, as described in \cite[Data~2.1]{EKK}. 
In particular, 
$$\textstyle \begin{array}{lll}
L_{a,b}&=&\ker \left(\bigwedge^a_{\mathbb Z}U\otimes_{{\mathbb Z}}{\operatorname{Sym}}_b^{{\mathbb Z}}U\xrightarrow{\kappa}
\bigwedge^{a-1}_{\mathbb Z}U\otimes_{{\mathbb Z}}{\operatorname{Sym}}_{b+1}^{{\mathbb Z}}U\right)\quad\text{and}\vspace{5pt}\\
K_{a,b}&=&\ker \left(\bigwedge^a_{\mathbb Z}U\otimes_{{\mathbb Z}}D_b^{{\mathbb Z}}(U^*)\xrightarrow{\eta}
\bigwedge^{a-1}_{\mathbb Z}U\otimes_{{\mathbb Z}}D_{b-1}^{{\mathbb Z}}(U^*)\right),\end{array}$$where $\kappa$ is a Koszul complex map and $\eta$ is an Eagon-Northcott complex map.
\item Consider the map of complexes

\begin{equation}\label{MOC}\xymatrix{& {\mathfrak R}\otimes_{{\mathbb Z}}L_{2,n}\ar[r]^{h_3}\ar[d]^{\underbar v_3}&{\mathfrak R}\otimes_{{\mathbb Z}}L_{1,n}\ar[r]^{h_2}\ar[d]^{\underbar v_2}&{\mathfrak R}\otimes_{{\mathbb Z}}L_{0,n}\ar[r]^{h_1}\ar[d]^{\underbar v_1}&{\mathfrak R}\vspace{5pt}\\
{\mathfrak R}\otimes_{{\mathbb Z}}\bigwedge^3_{\mathbb Z}U\ar[r]^{h_3'}&{\mathfrak R}\otimes_{{\mathbb Z}}K_{2,n-2}\ar[r]^{h_2'}&{\mathfrak R}\otimes_{{\mathbb Z}}K_{1,n-2}\ar[r]^{h_1'}&{\mathfrak R}\otimes_{{\mathbb Z}}K_{0,n-2},}\end{equation}
of \cite[Obv.~4.2]{EKK}. The vertical map $\underbar v_i:{\mathfrak R}\otimes_{{\mathbb Z}}L_{i-1,n} \to {\mathfrak R}\otimes_{{\mathbb Z}}K_{i-1,n-2}$
is induced by \begin{equation}\label{v1}{\operatorname{Sym}}_{n}^{\mathbb Z}U\to {\mathfrak R}\otimes_{\mathbb Z}D_{n-2}(U^*)\quad\text{with}\quad \mu\mapsto \mu(\Phi)\text{ for $\mu\in {\operatorname{Sym}}_n^{{\mathbb Z}}U$};\end{equation}the horizontal map $h_1$ is induced by $\Psi:{\operatorname{Sym}}_n^{{\mathbb Z}}U\to {\mathfrak R}$; the horizontal map $$h_i:{\mathfrak R}\otimes_{{\mathbb Z}}L_{i-1,n}\to {\mathfrak R}\otimes_{{\mathbb Z}}L_{i-2,n},\quad\text{for $2\le i\le 3$},$$
is induced by the Koszul complex map $\bigwedge^{i-1}_{{\mathbb Z}}U\to {\mathfrak R}\otimes_{{\mathbb Z}} \bigwedge^{i-2}_{{\mathbb Z}}U$ with \begin{equation}\label{KCM}u_1\wedge\cdots\wedge u_{i-1}\mapsto \sum\limits_{j=1}^{i-1}(-1)^{j+1}\Psi(u_j)\otimes u_1\wedge\cdots\wedge\widehat{u_j}\wedge\cdots\wedge u_{i-1} \text{ for $u_j\in U$;}\end{equation}the horizontal map $h_i':{\mathfrak R}\otimes_{{\mathbb Z}}K_{i,n-2}\to {\mathfrak R}\otimes_{{\mathbb Z}}K_{i-1,n-2}$, for $1\le i\le 2$, is induced by the Koszul complex map $\bigwedge^{i}_{{\mathbb Z}}U\to {\mathfrak R}\otimes_{{\mathbb Z}} \bigwedge^{i-1}_{{\mathbb Z}}U$
which is analogous to (\ref{KCM}); and 
the horizontal map $$\textstyle h_3':{\mathfrak R}\otimes_{{\mathbb Z}}\bigwedge^3_{{\mathbb Z}}U\to {\mathfrak R}\otimes_{{\mathbb Z}}K_{2,n-2}$$ is induced by $$\textstyle \bigwedge^3_{{\mathbb Z}}U\to {\mathfrak R}\otimes_{{\mathbb Z}} \bigwedge^2_{{\mathbb Z}}U\otimes_{{\mathbb Z}}D_{n-2}^{\mathbb Z}(U^*)$$ with
$$u_1\wedge u_2\wedge u_3\mapsto \sum\limits_\ell \Psi(m_\ell)\otimes [u_1\wedge u_2\otimes u_3(m_\ell^*) 
-u_1\wedge u_3\otimes u_2(m_\ell^*)
+u_2\wedge u_3\otimes u_1(m_\ell^*)],$$ where $(\{m_\ell\},\{m_\ell^*\})$ is any pair of dual bases for ${\operatorname{Sym}}_{n-1}^{{\mathbb Z}}U$ and $D_{n-1}^{{\mathbb Z}}(U^*)$, respectively, and the $u_j$ are elements of $U$.

\item\label{Mar-5-2014-c} Follow the lead of \cite[Def.~6.6 and Thm.~6.15]{EKK} and consider the complex of ${\mathfrak R}$-module homomorphisms
\begin{equation}\label{Bbb-G}(\mathbb G,g):\quad 0\to G_3\xrightarrow{\ g_3\ }G_2\xrightarrow{\ g_2\ }G_1\xrightarrow{\ g_1\ }{\mathfrak R},\end{equation}with $G_i=\ker \underbar v_i$, for $1\le i\le 2$, and $G_3={\mathfrak R}\otimes_{{\mathbb Z}}\bigwedge_{{\mathbb Z}}^3U$. (In \cite{EKK} this complex is called $\widetilde{\mathbb G}'(n)$.) The ${\mathfrak R}$-module homomorphism $g_1$ is induced by $G_1\subseteq {\mathfrak R}\otimes_{{\mathbb Z}}{\operatorname{Sym}}_nU\xrightarrow{\Psi}{\mathfrak R}$; the ${\mathfrak R}$-module homomorphism $g_2$ is induced by 
$$\textstyle G_2\subseteq {\mathfrak R}\otimes_{{\mathbb Z}}\bigwedge_{{\mathbb Z}}^1 U\otimes_{{\mathbb Z}}{\operatorname{Sym}}_n^{\mathbb Z}U \xrightarrow{\ 1\otimes \Psi\otimes 1\ }{\mathfrak R}\otimes_{{\mathbb Z}}{\operatorname{Sym}}_n^{{\mathbb Z}}U\supset G_1;$$ and the 
${\mathfrak R}$-module homomorphism $g_3$ is induced by 

$$\begin{array}{lcl}
\textstyle G_3={\mathfrak R}\otimes_{{\mathbb Z}}\bigwedge_{{\mathbb Z}}^3U&\xrightarrow{\ q\circ \operatorname{ev}^*\ }&
{\mathfrak R}\otimes_{{\mathbb Z}}\bigwedge_{{\mathbb Z}}^3U\otimes_{{\mathbb Z}}{\operatorname{Sym}}_{n-1}^{{\mathbb Z}} U\vspace{5pt}\\&\xrightarrow{\ \operatorname{Kos}^{\Psi}\circ \kappa\ }&
{\mathfrak R}\otimes_{{\mathbb Z}}\bigwedge_{{\mathbb Z}}^1U\otimes_{{\mathbb Z}}{\operatorname{Sym}}_{n}^{{\mathbb Z}} U\supset {\mathfrak R}\otimes_{{\mathbb Z}} L_{1,n}\supset G_2.\end{array}$$
The ${\mathfrak R}$-module homomorphism $q\circ \operatorname{ev}^*$ sends the element $\omega\in \bigwedge_{{\mathbb Z}}^3U$,  to
$$\textstyle (q\circ \operatorname{ev}^*)(\omega)=
\sum\limits_\ell \Psi(m_\ell)\otimes \omega \otimes q(m_\ell^*)\in {\mathfrak R}\otimes_{{\mathbb Z}}\bigwedge_{{\mathbb Z}}^3U\otimes_{{\mathbb Z}}{\operatorname{Sym}}_{n-1}^{{\mathbb Z}} U,$$
  where $(\{m_\ell\},\{m_\ell^*\})$ is any pair of dual bases for ${\operatorname{Sym}}_{n-1}^{{\mathbb Z}}U$ and $D_{n-1}^{{\mathbb Z}}(U^*)$.  The ${\mathfrak R}$-module homomorphism $\operatorname{Kos}^{\Psi}\circ \kappa$ sends the element $1\otimes (u_1\wedge u_2\wedge u_3)\otimes \mu$ of ${\mathfrak R}\otimes_{{\mathbb Z}}\bigwedge_{{\mathbb Z}}^3U\otimes_{{\mathbb Z}}{\operatorname{Sym}}_{n-1}^{{\mathbb Z}} U$, with $u_j\in U$ and $\mu\in {\operatorname{Sym}}_{n-1}^{{\mathbb Z}}U$,
to 
$$\begin{array}{rl}&(\operatorname{Kos}^{\Psi}\circ \kappa)(1\otimes (u_1\wedge u_2\wedge u_3)\otimes \mu)\vspace{5pt}\\=&\begin{cases}
\phantom{+}\Psi(u_1)\otimes u_2\otimes u_3\mu
-\Psi(u_2)\otimes u_1\otimes u_3\mu
-\Psi(u_1)\otimes u_3\otimes u_2\mu\vspace{5pt}\\
+\Psi(u_3)\otimes u_1\otimes u_2\mu
+\Psi(u_2)\otimes u_3\otimes u_1\mu
-\Psi(u_3)\otimes u_2\otimes u_1\mu\end{cases}\end{array}$$
in ${\mathfrak R}\otimes_{{\mathbb Z}}\bigwedge_{{\mathbb Z}}^1U\otimes_{{\mathbb Z}}{\operatorname{Sym}}_{n}^{{\mathbb Z}} U$.

\item As was observed in Remark \ref{Phi}, $\Phi$ is naturally an element of the 
$${\mathfrak R}\otimes_{{\mathbb Z}}D_{2n-2}^{{\mathbb Z}}(U^*)=D_{2n-2}^{{\mathfrak R}}({\mathfrak R}\otimes_{{\mathbb Z}}U^*).$$
Let $I=\operatorname{ann}(\Phi)$. In other words,
$$I=\{r\in {\mathfrak R}\mid r(\Phi)=0 \in {\mathfrak R}\otimes_{{\mathbb Z}}D_{2n-2}^{{\mathbb Z}}(U^*)\},$$ where $r(\Phi)$ represents the ${\mathfrak R}$-module action of $r$ on $\Phi$. 
\end{enumerate}
\end{data}

The following result is established in \cite[Thms.~6.15 and 4.16]{EKK}. For item (\ref{p-of-p}) one must also use 
the ``Persistence of Perfection Principle'', which is also known as  the ``transfer of perfection'' (see 
\cite[Prop.~6.14]{Ho-T} or \cite[Thm.~3.5]{BV}).

\begin{theorem}\label{old-mvd} Adopt Data~{\rm{\ref{Mar-5-2014}}}. The following statements hold.
\begin{enumerate}[\rm(1)]
\item The ${\mathfrak R}$-module homomorphisms $(\mathbb G,g)$ form a complex.
\item The localization $\mathbb G_{\pmb \delta}$ is a  resolution of ${\mathfrak R}_{\pmb \delta}/I{\mathfrak R}_{\pmb \delta}$ by projective ${\mathfrak R}_{\pmb\delta}$-modules. {\rm(}The projective ${\mathfrak R}_{\pmb\delta}$-modules  $(G_1)_{\pmb\delta}$ and $(G_2)_{\pmb \delta}$ both have rank $2n+1$. The module $G_3$ is isomorphic to ${\mathfrak R}$.{\rm)}
\item\label{p-of-p} If $S$ is a Noetherian ring, $\rho:{\mathfrak R}\to S$ is a ring homomorphism, $(\rho({\mathfrak R}_{(1,0)}))$ is an ideal of $S$ of grade at least $3$, and $\rho(\pmb \delta)$ is a unit of $S$, then $S\otimes_{{\mathfrak R}}\mathbb G$ is a resolution of $S/\rho(I)$ by projective $S$-modules.
\item Let $S$ be the standard-graded polynomial ring ${\pmb k}\otimes_{{\mathbb Z}}{\operatorname{Sym}}_\bullet^{{\mathbb Z}}U$, where ${\pmb k}$ is a field, and let $\rho: {\mathfrak R}\to S$ be a ${\operatorname{Sym}}_\bullet^{{\mathbb Z}}U$-algebra homomorphism, with 
\begin{enumerate}[\rm(i)]\item
$\rho({\mathfrak R}_{(0,1)})\in {\pmb k}$,   \item $\rho(\pmb \delta)$ is a unit in ${\pmb k}$, and 
\item $\Phi_{\rho}=\sum_{i}\rho(\Phi(m_i))m_i^*$ is the element in $D_{2n-2}^{\pmb k}(S_1^*)$ which corresponds to $\Phi$, where $\{m_i\}$, $\{m_i^*\}$ is any pair of dual bases for 
${\operatorname{Sym}}_{2n-2}^{\mathbb Z} U$ and $D_{2n-2}^{\mathbb Z}(U^*)$, respectively,

\end{enumerate}
then 
\begin{enumerate}[\rm(a)]
\item $S\otimes_{\mathfrak R} \mathbb G$ is a minimal homogeneous resolution of $S/\operatorname{ann} (\Phi_{\rho})$ by free $S$-modules 
\item $S\otimes_{\mathfrak R} \mathbb G$ is a Gorenstein-linear resolution of the form
\begin{equation}\label{form-mvd}0\to S(-2n-1)\to S(-n-1)^{2n+1}\to S(-n)^{2n+1}\to S\end{equation}
\item\label{by-way-of} every Gorenstein-linear resolution of the form {\rm{(\ref{form-mvd})}}  is obtained as $S\otimes_{\mathfrak R} \mathbb G$ for some ${\operatorname{Sym}}_\bullet^{{\mathbb Z}}U$-algebra homomorphism $\rho: {\mathfrak R}\to S$ which satisfies {\rm (i)\,--\,(iii)}.

\end{enumerate}\end{enumerate}
\end{theorem}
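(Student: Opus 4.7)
The plan is to verify the four numbered assertions in sequence, leveraging the map of complexes (\ref{MOC}) together with the fact that after inverting $\pmb\delta$, the map $p$ becomes an isomorphism. First I would check that $(\mathbb G,g)$ is a complex. Since $g_1,g_2,g_3$ are all defined as restrictions or factorizations of maps appearing in the top row of (\ref{MOC}), the relation $g_1\circ g_2=0$ reduces to the fact that the Koszul-induced composition $h_1\circ h_2$ vanishes on $L_{1,n}$, which it does because $h_1,h_2$ come from the ordinary Koszul differential. The relation $g_2\circ g_3=0$ requires a little more: one applies $\operatorname{Kos}^\Psi\circ\kappa$ followed by $h_2$ (restricted to the image of $g_3$), and observes that the adjoint twist $q\circ\operatorname{ev}^*$ lands in $G_2=\ker\underbar v_2$ (using Observation~\ref{obvious}(\ref{obvious-c}) to cancel factors of $\pmb\delta$); the remaining composition is again two consecutive Koszul-type maps.

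For the acyclicity claim (2), I would localize at $\pmb\delta$. By Observation~\ref{obvious}(\ref{obvious-b}) and (\ref{obvious-c}), $p$ becomes an isomorphism with inverse $\pmb\delta^{-1}q$, and the vertical maps $\underbar v_i$ in (\ref{MOC}) become split surjections. This identifies $G_i$ (after localization) with an explicit direct summand of ${\mathfrak R}_{\pmb\delta}\otimes L_{i-1,n}$, and the bottom row of (\ref{MOC}) becomes the Eagon-Northcott-style resolution of $I{\mathfrak R}_{\pmb\delta}$ associated with $p_{\pmb\delta}$. A mapping-cone comparison (or, equivalently, a routine snake-lemma walk through the diagram) then shows that the homology of $\mathbb G_{\pmb\delta}$ is concentrated in degree $0$ and equals ${\mathfrak R}_{\pmb\delta}/I{\mathfrak R}_{\pmb\delta}$. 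The rank computations for $(G_1)_{\pmb\delta}$ and $(G_2)_{\pmb\delta}$ follow by counting dimensions of $L_{i-1,n}$ and the kernel conditions; the identification $G_3\cong{\mathfrak R}$ is immediate from $\bigwedge^3_{\mathbb Z}U\cong\mathbb Z$.

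For (3), I would cite the Persistence of Perfection Principle of Hochster-Huneke/Bruns-Vetter in the form stated in \cite[Prop.~6.14]{Ho-T} or \cite[Thm.~3.5]{BV}. Part (2) provides acyclicity on the open set $D(\pmb\delta)\subset\operatorname{Spec}{\mathfrak R}$, and the complex $\mathbb G$ has length $3$ with free terms; the hypothesis that $(\rho({\mathfrak R}_{(1,0)}))$ has grade at least $3$ in $S$ and that $\rho(\pmb\delta)$ is a unit provides exactly the grade condition on the specialization needed to transfer acyclicity from ${\mathfrak R}_{\pmb\delta}$ to $S$.

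Finally, for (4), I would specialize (3) to $S=\pmb k\otimes_{\mathbb Z}\operatorname{Sym}_\bullet^{\mathbb Z}U$: the hypotheses (i)-(iii) make $(\rho({\mathfrak R}_{(1,0)}))=(x,y,z)S$ of grade $3$, so $S\otimes_{\mathfrak R}\mathbb G$ resolves $S/\rho(I)=S/\operatorname{ann}(\Phi_\rho)$. The shape (\ref{form-mvd}) and the internal degrees follow by inspecting the homogeneous components of $L_{i-1,n}$ and $\bigwedge^3U$ together with the degrees of the defining maps (each $g_i$ is given by forms of degree $1$ in the $x_i$-variables except $g_1$ and $g_3$, whose entries have degree $n$); minimality is then automatic since every coefficient of $x,y,z$ specialized from ${\mathfrak R}_{(0,*)}$ lies in the image of $\rho$, which sits in $\pmb k$, while the $x,y,z$ themselves lie in the maximal ideal. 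The converse (\ref{by-way-of}) would be obtained by reading the socle generator of any such resolution to produce $\Phi_\rho$, then invoking the uniqueness of the inverse system. The main obstacle I anticipate is the acyclicity step after localization: one must carefully package (\ref{MOC}) so that $p_{\pmb\delta}^{-1}=\pmb\delta^{-1}q$ produces a splitting compatible with both Koszul differentials, and this bookkeeping is where the formulas of Observation~\ref{obvious} carry most of the weight.
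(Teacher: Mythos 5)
The paper does not prove Theorem~\ref{old-mvd}; it cites \cite[Thms.~6.15 and 4.16]{EKK} together with the Persistence of Perfection Principle for item~(\ref{p-of-p}). So there is no ``paper's own proof'' here to compare against --- this is an imported result, and the remark preceding the theorem says so explicitly. Your proposal, by contrast, attempts to reconstruct the argument of \cite{EKK} from the scaffolding in Data~\ref{Mar-5-2014}, which is a legitimate but much more ambitious undertaking than the paper asks of the reader.

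Within your sketch there is at least one concrete misstatement and one serious vagueness. The misstatement: you claim the bottom row of (\ref{MOC}) ``becomes the Eagon-Northcott-style resolution of $I{\mathfrak R}_{\pmb\delta}$.'' But the bottom row terminates at ${\mathfrak R}\otimes_{\mathbb Z}K_{0,n-2}={\mathfrak R}\otimes_{\mathbb Z}D_{n-2}^{\mathbb Z}(U^*)$, which is a free module of rank $\binom{n}{2}$ and has no intrinsic connection to the ideal $I=\operatorname{ann}(\Phi)$; the bottom row is a truncated Schur/Koszul-type complex, not a resolution of $I$ or of ${\mathfrak R}/I$. Whatever the bottom row resolves after inverting $\pmb\delta$, identifying it correctly is exactly where the bookkeeping of \cite{EKK} lives, and your one-sentence gesture at a ``mapping-cone comparison'' does not close this gap. (The correct mechanism is considerably more delicate: as you can see from the proof of Lemma~\ref{main-lemma}(\ref{main-lemma-a})--(\ref{main-lemma-b}) in the present paper, the verification that the vertical maps $\underbar v_i$ split after inverting $\pmb\delta$ is already a multi-page basis-manipulation argument, and that lemma only treats $i=1,2$.) The vagueness: your treatment of part~(4)(\ref{by-way-of}) --- ``reading the socle generator \dots then invoking uniqueness of the inverse system'' --- compresses what is in fact a nontrivial classification result \cite[Prop.~1.8]{EKK} relating Gorenstein-linear resolutions of the prescribed shape to inverse systems with nonvanishing catalecticant, and that reduction needs to be stated, not waved at. Your instincts about the overall shape of the argument (localize at $\pmb\delta$, use $q$ as the inverse of $p$ up to $\pmb\delta$, split (\ref{MOC}), cite persistence of perfection for (3)) are all reasonable and consistent with what the paper's setup suggests, but as written this would not constitute a proof --- it would constitute a plan whose hardest steps remain unexecuted.
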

\begin{remark} The paper \cite{EKK} only promises that the ${\mathfrak R}_{\pmb \delta}$-modules $(G_1)_{\pmb\delta}$ and $(G_2)_{\pmb\delta}$ of Theorem~\ref{old-mvd} are projective. In Lemma~\ref{main-lemma}, we  prove that  the ${\mathfrak R}_{\pmb \delta}$-modules $(G_1)_{\pmb\delta}$ and $(G_2)_{\pmb\delta}$  are free. So, Lemma~\ref{main-lemma} shows that each  ``projective'' in Theorem~\ref{old-mvd} may be replaced by ``free''.\end{remark}

\section{The main theorem}\label{main-theorem} The complex $(\mathbb G,g)$ of Theorem~\ref{old-mvd} has all of the desired properties, except, one is not able to answer the (very basic) questions, ``What \underline{exactly} are $G_1$, $G_2$, $g_1$, and $g_2$?'' The explicitly constructed complex $(\mathbb B,b)$ is our remedy to this defect of $(\mathbb G,g)$. 
The main result in the paper is Theorem~\ref{main}.

\begin{theorem}\label{main}Let $A$ be a standard-graded, Artinian, Gorenstein algebra over a field $\pmb k$. If the embedding codimension of $A$ is three and  
the minimal homogeneous resolution of $A$ by free $\operatorname{Sym}_{\bullet}^{\pmb k}A_1$-modules is Gorenstein-linear, then
$\operatorname{Sym}_{\bullet}^{\pmb k}A_1\otimes_{\mathfrak R}\mathbb B$ is
 a minimal homogeneous  resolution of 
$A$ by free $\operatorname{Sym}_{\bullet}^{\pmb k}A_1$-modules, where $(\mathbb B,b)$ is  
the  complex  of Definition~{\rm{\ref{main-def}.\ref{main-def-c}}}. 
Furthermore, $\operatorname{Sym}_{\bullet}^{\pmb k}A_1\otimes_{\mathfrak R}\mathbb B$ is explicitly constructed in a polynomial manner from the coefficients of the Macaulay inverse system for $A$. 
\end{theorem}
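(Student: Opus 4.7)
The approach is to compare the explicit complex $(\mathbb B,b)$ with the known but implicit complex $(\mathbb G,g)$ of Theorem~\ref{old-mvd}, exactly along the route the authors preview in the discussion of Lemma~\ref{main-lemma}. Everything we need over a field will follow from the generic statements about $\mathfrak R$, once we produce a map of complexes $\tau\colon(\mathbb B,b)\to(\mathbb G,g)$ whose image is a sub-complex $(\mathbb E,e)$ with $(\mathbb E,e)_{\pmb\delta}=(\mathbb G,g)_{\pmb\delta}$.

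First I would verify that $(\mathbb B,b)$ is in fact a complex. Since $\mathfrak b$ is defined on $\bigwedge^2_{\mathfrak R}B_2$, the homomorphism $b_2\colon B_2\to B_2^\ast$ is automatically alternating, and the self-dual shape $b_3(1)=b_1$ then reduces the complex condition to the single identity $b_1\circ b_2=0$. I would check this by expanding $b_1(b_2(\mu+\nu))$ on the summands ${\operatorname{Sym}}_{n-1}^{\mathbb Z}U_0$ and $D_{n}^{\mathbb Z}(U_0^*)$ of $B_2$ and applying the classical adjoint identities of Observation~\ref{obvious}, namely $q\circ p=\pmb\delta\otimes\mathrm{id}$, $p\circ q=\pmb\delta\otimes\mathrm{id}$, the symmetry of $\mathfrak Q$, and the relation $\mathfrak Q(p(\mu)\otimes\nu)=\pmb\delta\cdot\mu(\nu)$. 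The terms carrying the factor $\Psi(x)$ produced by $\beta_1,\beta_2,\beta_3$ will cancel in pairs against the terms coming from the $\Psi(y),\Psi(z)$ summands of $\mathfrak b$ once the Koszul relation $\Psi(x)\cdot[\Psi(y)\cdot\cdots-\Psi(z)\cdot\cdots]$ is rewritten using the definition of $\widetilde\Phi$ and the action $x(\widetilde\Phi)=\Phi$.

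Next I would construct the comparison map $\tau$ degree by degree. The modules $B_2=\mathfrak R\otimes_{\mathbb Z}({\operatorname{Sym}}_{n-1}^{\mathbb Z}U_0\oplus D_n^{\mathbb Z}(U_0^*))$ and $B_1=B_2^\ast$ have ranks $n+(n+1)=2n+1$, matching the generic ranks of $G_2$ and $G_1$. The splitting of $U$ as $\mathbb Zx\oplus U_0$ gives natural inclusions ${\operatorname{Sym}}_{n-1}^{\mathbb Z}U_0\hookrightarrow {\operatorname{Sym}}_n^{\mathbb Z}U$ (multiply by $x$) and $D_n^{\mathbb Z}(U_0^*)\hookrightarrow D_n^{\mathbb Z}(U^*)$ (extend by zero on $x$-divisible monomials); composing with $\Psi$, $q$, and the classical adjoint constructions of Data~\ref{manuf-data} I would build explicit lifts into $L_{0,n}$, $L_{1,n}$, and check that they land in $G_i=\ker\underline v_i$ by using precisely the identity $\mu(\Phi)=x(\mu\cdot\widetilde\Phi)$ encoded in $\widetilde\Phi$. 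Set $\tau_3=\pmb\delta\cdot\mathrm{id}$ (with appropriate sign) and $\tau_0=\mathrm{id}$; the freedom in defining $\widetilde\Phi$ was precisely chosen so that the resulting $\tau_1,\tau_2$ commute with the differentials. I would then let $(\mathbb E,e)$ be the image subcomplex $\tau(\mathbb B)$ and verify that localizing at $\pmb\delta$ makes $\tau$ an isomorphism: over $\mathfrak R_{\pmb\delta}$ the map $p$ becomes invertible with inverse $\pmb\delta^{-1}q$, so the relevant kernel-and-image descriptions collapse to a direct summand identification.

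The main obstacle will be the compatibility of $\tau_2$ with $b_2$ and $g_2$, because $g_2$ is defined in Data~\ref{Mar-5-2014}\,(\ref{Mar-5-2014-c}) via a sequence of Koszul and Eagon-Northcott maps restricted to kernels, while $b_2$ is built from the alternating form $\mathfrak b$ carrying the three awkward pieces $\beta_1,\beta_2,\beta_3$. I expect the proof to require a careful dual-basis computation, rewriting each $\beta_i$ in terms of sums $\sum_\ell\Psi(m_\ell)\otimes q(m_\ell^*)(\cdots)$ using the defining identity for $q$ in Data~\ref{manuf-data}\,(\ref{8.19.c}) and the interchange rules (\ref{mod-act}) and (\ref{flip-flop}). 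Once this is done, everything else is formal: by Theorem~\ref{old-mvd}(2), $\mathbb G_{\pmb\delta}$ resolves $\mathfrak R_{\pmb\delta}/I\mathfrak R_{\pmb\delta}$, hence so does $\mathbb B_{\pmb\delta}$, so $\mathbb B$ itself has homology supported in $V(\pmb\delta)$ and is a free $\mathfrak R_{\pmb\delta}$-resolution of $\mathfrak R_{\pmb\delta}/I\mathfrak R_{\pmb\delta}$. Finally, for the specific algebra $A$ in the statement, the hypothesis that its resolution is Gorenstein-linear is equivalent to $\det(\alpha_{mm'})\neq 0$, i.e.\ to $\rho(\pmb\delta)\in\pmb k^{\times}$; the ideal $(\rho(\mathfrak R_{(1,0)}))$ is the irrelevant maximal ideal of $S=\operatorname{Sym}_{\bullet}^{\pmb k}A_1$ and has grade three. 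Therefore Theorem~\ref{old-mvd}(3) (the Persistence of Perfection Principle) applies with $\mathbb G$ replaced by $\mathbb B$, yielding that $S\otimes_{\mathfrak R}\mathbb B$ resolves $S/\operatorname{ann}(\Phi_\rho)=A$; the Gorenstein-linear shape (\ref{form-mvd}) together with $\rho(\mathfrak R_{(1,0)})\subseteq\mathfrak m_S$ forces minimality, completing the proof.
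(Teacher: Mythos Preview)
Your proposal follows essentially the same route as the paper: build $\tau\colon(\mathbb B,b)\to(\mathbb G,g)$, show its image $(\mathbb E,e)$ is a free subcomplex with $(\mathbb E,e)_{\pmb\delta}=(\mathbb G,g)_{\pmb\delta}$, and then invoke Theorem~\ref{old-mvd} to pass to the field case. Two points are worth flagging.

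First, your opening step --- verifying directly that $b_1\circ b_2=0$ --- is not how the paper proceeds, and it is unnecessary. The paper never checks this identity head-on; instead it first proves the commutative diagram $\tau_{i-1}\circ b_i=g_i\circ\tau_i$ (Lemma~\ref{comm-diag}), then shows each $\tau_i$ is injective by exhibiting that it carries a basis of $B_i$ to a basis of the free module $E_i$ (Lemma~\ref{main-lemma}.\ref{main-lemma-a},\ref{main-lemma-c}), and only then deduces $b_i\circ b_{i+1}=0$ from $\tau_{i-1}\circ b_i\circ b_{i+1}=g_i\circ g_{i+1}\circ\tau_{i+1}=0$ (Lemma~\ref{main-lemma}.\ref{main-lemma-e}). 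Your direct verification would work but amounts to a computation of the same order as Lemma~\ref{comm-diag} itself, so it doubles the work.

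Second, a small correction: $\tau_3(1)=\pmb\delta^2\,x\wedge y\wedge z$, not $\pmb\delta$ times the identity. The extra power of $\pmb\delta$ is forced by the bi-degrees and is what makes the square at position~$3$ commute with $g_3$, which already carries one copy of $q$ (hence one $\pmb\delta$ in the adjoint identities) in its definition via $q\circ\operatorname{ev}^*$.
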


\noindent Theorem~\ref{main} is an immediate consequence of  Lemma~\ref{main-lemma}.\ref{main-lemma-g} by way of Theorem~{\rm\ref{old-mvd}.\ref{by-way-of}}.

Roughly speaking, in order to prove Lemma~\ref{main-lemma} (and hence Theorem~\ref{main}) one must
identify a nice generating set for $(G_1)_{\pmb \delta}$ and $(G_2)_{\pmb \delta}$ and one must write $g_1$ and $g_2$ in terms of this nice generating set. In fact, we reformulate $g_1$ and $g_2$ first, in Lemma~\ref{comm-diag}, and then we reformulate  $(G_1)_{\pmb \delta}$ and $(G_2)_{\pmb \delta}$ in Lemma~\ref{main-lemma} (\ref{main-lemma-a}) and (\ref{main-lemma-b}). The proof of Lemma~\ref{comm-diag} is not particularly hard; but it is long. On the other hand, Lemma~\ref{comm-diag} is the central calculation in the paper.

 Ultimately, in Lemma~\ref{main-lemma}.\ref{main-lemma-f}, we produce an isomorphism of complexes $\tau:(\mathbb B,b)\to (\mathbb E,e)$, where $(\mathbb E,e)$ is a sub-complex of $(\mathbb G,g)$ with 
$(\mathbb E,e)_{\pmb \delta}=(\mathbb G,g)_{\pmb \delta}$. We begin by defining the critical homomorphisms $\tau_i$ in Definition~\ref{tau} and Observation~\ref{really}.
 In Lemma~\ref{comm-diag} 
 we show that the homomorphisms $\tau:(\mathbb B,b)\to (\mathbb G,g)$ form a commutative diagram.

\begin{definition}\label{tau} Recall the sequence of ${\mathfrak R}$-module homomorphisms $(\mathbb B,b)$ of Definition {\rm\ref{main-def}.\ref{main-def-c}} and the complex of ${\mathfrak R}$-modules $(\mathbb G,g)$ of Data {\rm \ref{Mar-5-2014}.\ref{Mar-5-2014-c}}. Define ${\mathfrak R}$-module homomorphisms $\tau_i:B_i\to G_i$ as follows.
\begin{enumerate}[\rm(a)]
\item Let $\tau_0:B_0={\mathfrak R}\to {\mathfrak R}=G_0$ be the identity map.
\item Let $\tau_1:B_1\to G_1$ be the
 ${\mathfrak R}$-module homomorphism defined 
by $$\tau_1 (\nu+\mu)=xq(\nu)+\pmb\delta \mu-xq(\mu({\widetilde{\Phi}}))\in {\mathfrak R}\otimes_{{\mathbb Z}}{\operatorname{Sym}}_n^{{\mathbb Z}}U,$$ for $\nu\in D_{n-1}^{{\mathbb Z}}(U_0^*)$ and $\mu\in {\operatorname{Sym}}_n^{{\mathbb Z}}U_0$. (We show in Observation~\ref{really} that the image of $\tau_1$ is contained in $G_1$.)

\item Let $\tau_2:B_2\to G_2$ be the
 ${\mathfrak R}$-module homomorphism defined 
by $$\tau_2 (\mu+\nu)= \begin{cases}
 \phantom{+}\pmb\delta\kappa(x\wedge z\otimes q((y\mu)({\widetilde{\Phi}})))-\pmb\delta\kappa (x \wedge y \otimes q((z\mu)({\widetilde{\Phi}})))-\pmb \delta^2 \kappa (y\wedge z\otimes \mu)\vspace{5pt}\\
+
  \pmb\delta\kappa (x\wedge z \otimes q(y(\nu))) -\pmb\delta\kappa (x\wedge y \otimes q(z(\nu))),&\text{in }{\mathfrak R}\otimes_{Z}L_{1,n}
,\end{cases}$$ for $\mu\in {\operatorname{Sym}}_{n-1}^{{\mathbb Z}}U_0$  and  $\nu\in D_{n}^{{\mathbb Z}}(U_0^*)$.
(We show in Observation~\ref{really} that the image of $\tau_2$ is contained in $G_2$.)
\item Let $\tau_3:B_3={\mathfrak R} \to {\mathfrak R}\otimes_{{\mathbb Z}}\bigwedge_{{\mathbb Z}}^3U=G_3$ be the
 ${\mathfrak R}$-module homomorphism defined $\tau_3(1)=\pmb \delta^2 x\wedge y\wedge z$.

\end{enumerate}
\end{definition}

\begin{observation}\label{really} The image of each homomorphism $\tau_i$, from Definition~{\rm\ref{tau}}, is in $G_i$.  \end{observation}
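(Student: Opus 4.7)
The plan is to verify, case by case for $i\in\{0,1,2,3\}$, that $\tau_i(B_i)\subseteq G_i$, relying almost entirely on Observation~\ref{obvious}(\ref{obvious-c}) (which says $p(q(\nu))=\pmb\delta\otimes\nu$) together with the defining properties of $\widetilde\Phi$, namely $x(\widetilde\Phi)=\Phi$ and $\mu(\widetilde\Phi)=0$ for $\mu\in\operatorname{Sym}_{2n-1}^{\mathbb Z}U_0$. The cases $i=0$ and $i=3$ are immediate: $\tau_0$ is the identity on $\mathfrak R=G_0$, and $\tau_3(1)=\pmb\delta^2\, x\wedge y\wedge z$ already lies in $G_3=\mathfrak R\otimes_{\mathbb Z}\bigwedge^3_{\mathbb Z}U$ by definition.

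For $\tau_1$, recall that $G_1=\ker\underbar v_1\subseteq\mathfrak R\otimes_{\mathbb Z}\operatorname{Sym}_n^{\mathbb Z}U$ with $\underbar v_1(\mu)=\mu(\Phi)$, so one must show $\underbar v_1\tau_1=0$. On the summand $D_{n-1}^{\mathbb Z}(U_0^*)$ I would compute $\underbar v_1(xq(\nu))=x\cdot p(q(\nu))=x\cdot\pmb\delta\nu=\pmb\delta\cdot x(\nu)$, which vanishes since $\nu$ has no $x$-component as an element of $D_{n-1}^{\mathbb Z}(U_0^*)$. On the summand $\operatorname{Sym}_n^{\mathbb Z}U_0$, the same substitution yields $\underbar v_1(\pmb\delta\mu-xq(\mu(\widetilde\Phi)))=\pmb\delta\,\mu(\Phi)-\pmb\delta\cdot(x\mu)(\widetilde\Phi)$, and the two terms cancel because $(x\mu)(\widetilde\Phi)=\mu(x\widetilde\Phi)=\mu(\Phi)$.

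For $\tau_2$, note first that membership in $\mathfrak R\otimes_{\mathbb Z}L_{1,n}$ is automatic: every term of $\tau_2$ lies in the image of the Koszul map $\kappa:\bigwedge^2_{\mathbb Z}U\otimes_{\mathbb Z}\operatorname{Sym}_{n-1}^{\mathbb Z}U\to U\otimes_{\mathbb Z}\operatorname{Sym}_n^{\mathbb Z}U$, which is contained in $L_{1,n}$. The substantive task is the vanishing $\underbar v_2\tau_2=0$. Any Koszul symbol $\kappa(u\wedge u'\otimes\mu_i)=u\otimes u'\mu_i-u'\otimes u\mu_i$ is sent by $\underbar v_2$ to $u\otimes(u'\mu_i)(\Phi)-u'\otimes(u\mu_i)(\Phi)$; I would then apply Observation~\ref{obvious}(\ref{obvious-c}) to turn each $q(\eta)(\Phi)$ into $\pmb\delta\cdot\eta$. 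With that substitution, the three $\mu$-terms in the definition of $\tau_2$ produce six summands in $\mathfrak R\otimes_{\mathbb Z}(U\otimes_{\mathbb Z} D_{n-2}^{\mathbb Z}(U^*))$ which pair up and cancel in three pairs, using $x(\widetilde\Phi)=\Phi$ on the pairs that match a $\Phi$ with a $\widetilde\Phi$; the two $\nu$-terms produce four summands, of which the two proportional to $x(\nu)$ vanish (because $\nu\in D_n^{\mathbb Z}(U_0^*)$ has no $x$-component), and the remaining two are both $\pm\pmb\delta^2\,x\otimes (yz)(\nu)$ and cancel.

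I do not anticipate any serious obstacle here: once the definitions are unwound, the verification is a short bookkeeping calculation with the five Koszul terms of $\tau_2$, driven entirely by the identities $p\circ q=\pmb\delta\cdot\mathrm{id}$ and $x(\widetilde\Phi)=\Phi$ together with the commutativity $\mu(\mu'(\nu))=(\mu\mu')(\nu)$ from \eqref{mod-act}. The closest thing to a subtlety is keeping straight when an action of $x$ gets absorbed by $\widetilde\Phi$ (producing $\Phi$) versus when it annihilates an element of the $U_0$-part of $D_\bullet(U^*)$.
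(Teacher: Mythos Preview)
Your proposal is correct and follows essentially the same approach as the paper's proof: both dispatch $i=0,3$ as trivial, verify $\underbar v_1\tau_1=0$ on each summand of $B_1$ using $p\circ q=\pmb\delta\cdot\mathrm{id}$ together with $x(\nu)=0$ and $x(\widetilde\Phi)=\Phi$, and then for $\tau_2$ observe that membership in $\mathfrak R\otimes_{\mathbb Z}L_{1,n}$ is automatic and compute $\underbar v_2$ term by term to exhibit the same cancellation pattern you describe. The only difference is presentational: the paper writes out each of the six (respectively four) summands explicitly rather than summarizing the pairing, but the argument is identical.
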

\begin{proof}We need only discuss $\tau_1$ and $\tau_2$. The module $G_1$ 
is defined to be the kernel of the ${\mathfrak R}$-module homomorphism 
$$\underbar v_1:  {\mathfrak R} \otimes_{{\mathbb Z}}L_{0,n}={\mathfrak R} \otimes_{{\mathbb Z}}{\operatorname{Sym}}_{n}^{{\mathbb Z}}U\to {\mathfrak R} \otimes_{{\mathbb Z}}D_{n-2}^{{\mathbb Z}}(U^*).$$ We  verify that the image of $\tau_1$ is contained in the kernel of $\underbar v_1$.
If $\nu\in D_{n-1}U_0^*$, then
$$\begin{array}{llll}\underbar v_1(\tau_1(\nu))=\underbar v_1(xq(\nu))&=&[xq(\nu)](\Phi)&{\rm(\ref{v1})}\vspace{5pt}\\&=&x\left( [q(\nu)](\Phi)\right)&{\rm(\ref{mod-act})}\vspace{5pt}\\&=&x(p(q(\nu))&{\rm(\ref{obvious}.\ref{obvious-a})}\vspace{5pt}\\&=&\pmb \delta x(\nu)
&{\rm(\ref{obvious}.\ref{obvious-c})}
\vspace{5pt}\\&=&0&{\rm(\ref{xim})},\end{array}$$ and if 
$\mu$ in ${\operatorname{Sym}}_n^{{\mathbb Z}}U_0$, then
\begin{longtable}{llll}$\underbar v_1(\tau_1(\mu))=\underbar v_1\left(\pmb \delta \mu- xq(\mu({\widetilde{\Phi}}))\right)$
&$=$&$\pmb \delta \mu(\Phi)- [xq(\mu({\widetilde{\Phi}}))](\Phi)$&{\rm(\ref{v1})}\vspace{5pt}\\
&$=$&$\pmb \delta \mu(\Phi)- x\left([q(\mu({\widetilde{\Phi}}))](\Phi)\right)$&{\rm(\ref{mod-act})}\vspace{5pt}\\
&$=$&$\pmb \delta \mu(\Phi)- x\left( p[q(\mu({\widetilde{\Phi}}))]\right)$&{\rm(\ref{obvious}.\ref{obvious-a})}\vspace{5pt}\\
&$=$&$\pmb \delta \mu(\Phi)- \pmb \delta x\left( \mu({\widetilde{\Phi}})\right)$&{\rm(\ref{obvious}.\ref{obvious-c})}\vspace{5pt}\\
&$=$&$\pmb \delta \mu(\Phi)- \pmb \delta \mu\left( x({\widetilde{\Phi}})\right)$&{\rm(\ref{mod-act})}\vspace{5pt}\\
&$=$&$\pmb \delta \mu(\Phi)- \pmb \delta \mu (\Phi)$&{\rm(\ref{data-3})}\vspace{5pt}\\
&$=$&$0$.
\end{longtable}
 The module $L_{1,n}$ is equal to the submodule $\kappa (\bigwedge^2_{{\mathbb Z}}U\otimes_{{\mathbb Z}}{\operatorname{Sym}}_{n-1}^{{\mathbb Z}}U)$
of $\bigwedge^1_{{\mathbb Z}}U\otimes_{{\mathbb Z}}{\operatorname{Sym}}_{n}^{{\mathbb Z}}U$;
 and therefore the image of $\tau_2$ is automatically contained in  ${\mathfrak R}\otimes_{{\mathbb Z}}L_{1,n}$. We still must verify that $\tau_2(B_2)$ is contained in $G_2$, which is defined to be the kernel of the ${\mathfrak R}$-module homomorphism
$$\underbar v_2:{\mathfrak R}\otimes_{{\mathbb Z}}L_{1,n}\to {\mathfrak R}\otimes_{{\mathbb Z}}K_{1,n-2}.$$
Fix $\mu\in {\operatorname{Sym}}_{n-1}^{{\mathbb Z}}U_0$. The definition of $\kappa$ yields
$$\textstyle \kappa (x\wedge y \otimes q((z\mu)({\widetilde{\Phi}})))=y\otimes xq((z\mu)({\widetilde{\Phi}}))-x\otimes yq((z\mu)({\widetilde{\Phi}}))\in {\mathfrak R}\otimes_{{\mathbb Z}}\bigwedge^1_{{\mathbb Z}}U\otimes _{{\mathbb Z}}{\operatorname{Sym}}_{n}^{{\mathbb Z}}U;$$so, calculations similar to the calculations of $\ker \underbar v_1$ yield that 
$$\begin{array}{lll}\underbar v_2\left(\kappa (x\wedge z \otimes q((y\mu)({\widetilde{\Phi}})))\right)
&=&z\otimes [xq((y\mu)({\widetilde{\Phi}}))](\Phi)-x\otimes [zq((y\mu)({\widetilde{\Phi}}))](\Phi)\vspace{5pt}\\
&=&z\otimes x\left([q((y\mu)({\widetilde{\Phi}}))](\Phi)\right)-x\otimes z\left([q((y\mu)({\widetilde{\Phi}}))](\Phi)\right)\vspace{5pt}\\
&=&\pmb\delta \left[\phantom{-}z\otimes x((y\mu)({\widetilde{\Phi}}))-x\otimes z((y\mu)({\widetilde{\Phi}}))\right]\vspace{5pt}\\
&=&\pmb\delta \left[\phantom{-}z\otimes (y\mu)(\Phi)-x\otimes (yz\mu)({\widetilde{\Phi}})\right],\vspace{5pt}\\
\underbar v_2\left(-\kappa (x\wedge y \otimes q((z\mu)({\widetilde{\Phi}})))\right)
&=&\pmb\delta \left[-y\otimes (z\mu)(\Phi)+x\otimes (yz\mu)({\widetilde{\Phi}})\right],\quad\text{and}\vspace{5pt}\\
\underbar v_2\left(-\pmb\delta\kappa (y\wedge z\otimes \mu))\right)
&=&\pmb\delta \left[-z\otimes (y\mu)(\Phi)+y\otimes (z\mu)(\Phi)\vphantom{{\widetilde{\Phi}}}\right]; 
\end{array}$$
hence, $\tau_2(\mu)$, which is equal to $$\pmb\delta\left(\kappa (x\wedge z \otimes q((y\mu)({\widetilde{\Phi}})))-\kappa (x\wedge y \otimes q((z\mu)({\widetilde{\Phi}})))-\pmb\delta\kappa (y\wedge z\otimes \mu)\right),$$ is in $\ker \underbar v_2=G_2$. Fix $\nu\in D_{n}^{{\mathbb Z}}(U_0^*)$. Observe that
$$\begin{array}{lll}\underbar v_2(\tau_2(\nu))&=&\pmb\delta\underbar v_2\left(\vphantom{\widetilde{\Phi}}
\kappa (x\wedge z \otimes q(y(\nu)))-
\kappa (x\wedge y \otimes q(z(\nu))) \right)\vspace{5pt}\\
&=&\pmb\delta^2[z\otimes xy(\nu)-x\otimes yz(\nu)-y\otimes xz(\nu)+x\otimes yz(\nu)]\end{array}$$ and this is zero because $x(\nu)=0$. It follows that $\tau_2(\nu)$ also is in $G_2$.
\end{proof}

We re-write the differential $b_2$ of    Definition~\ref{main-def} in a manner that is a little easier to use. This calculation makes use of the facts 
\begin{equation}\label{FT}\sum\limits_{m\in\binom {y,z}{r}} m^*(\mu)\cdot m = \mu 
\quad\text{ and }\quad \sum\limits_{m\in\binom {y,z}{r}} m(\nu)\cdot m^* = \nu\end{equation} 
 for all $\mu$ in ${\operatorname{Sym}}_r^{{\mathbb Z}}U_0$  and all $\nu\in D_r^{{\mathbb Z}}(U_0^*)$ for any non-negative integer $r$.
\begin{observation}\label{b2}
\begin{enumerate}[\rm (a)]
\item\label{b2-a} If $\mu\in {\operatorname{Sym}}_{n-1}^{{\mathbb Z}}U_0$, then 
$$b_2(\mu)=\begin{cases}
\phantom{+}
x\otimes \sum\limits_{m_1\in \binom{y,z}{n-1}}
\left[\mathfrak Q((z\mu)({\widetilde{\Phi}})\otimes (ym_1)({\widetilde{\Phi}}))-\mathfrak Q((y\mu)({\widetilde{\Phi}})\otimes (zm_1)({\widetilde{\Phi}}))\right]
\cdot m_1^*
\vspace{5pt}\\
+x\otimes \sum\limits_{m_1\in \binom{y,z}{n}}
\left[\vphantom{{\widetilde{\Phi}}}\mathfrak Q((z\mu)({\widetilde{\Phi}})\otimes y(m_1^*))-\mathfrak Q((y\mu)({\widetilde{\Phi}})\otimes z(m_1^*))\right ]\cdot m_1\vspace{5pt}\\
+
y\otimes \pmb \delta z\mu- z\otimes\pmb \delta y\mu.
\end{cases}$$
\item\label{b2-b} If $\nu\in D_{n}^{{\mathbb Z}}(U_0^*)$, then 
$$b_2(\nu)=\begin{cases}
-y\otimes\pmb\delta\cdot z(\nu)+z\otimes\pmb\delta\cdot y(\nu)
 \vspace{5pt}\\
-x\otimes \sum\limits_{m_1\in \binom{y,z}{n-1}} 
\left[\mathfrak Q((zm_1)({\widetilde{\Phi}})\otimes y(\nu))-\mathfrak Q((ym_1)({\widetilde{\Phi}})\otimes z(\nu))\right]\cdot m_1^* \vspace{5pt}\\
+x\otimes \sum\limits_{m_1\in \binom{y,z}{n}} 
\left[\vphantom{{\widetilde{\Phi}}} \mathfrak Q(z(\nu)\otimes y(m_1^*))-\mathfrak Q(y(\nu)\otimes z(m_1^*))
\right]
\cdot m_1.
\end{cases}$$
\end{enumerate}
\end{observation}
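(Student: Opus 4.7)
The strategy is to read $b_2$ off from its defining relation $[b_2(\theta_2)](\theta_2') = \mathfrak{b}(\theta_2 \wedge \theta_2')$ of Definition~\ref{main-def}(\ref{main-def-b}). To verify (\ref{b2-a}), I would pair the claimed formula for $b_2(\mu)$ with an arbitrary generator $\mu' + \nu'$ of $B_2$, where $\mu' \in {\operatorname{Sym}}_{n-1}^{\mathbb Z} U_0$ and $\nu' \in D_n^{\mathbb Z}(U_0^*)$, using the natural duality between the summands of $B_1 = \mathfrak{R}\otimes_{\mathbb Z}(D_{n-1}^{\mathbb Z}(U_0^*) \oplus {\operatorname{Sym}}_n^{\mathbb Z} U_0)$ and $B_2$, and check that the result agrees with $\mathfrak{b}(\mu \wedge (\mu' + \nu'))$ as expanded via Definition~\ref{main-def}(\ref{main-def-b}).

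The crucial computational input is the pair of dual basis identities (\ref{FT}): $\sum_{m_1 \in \binom{y,z}{n-1}} m_1^*(\mu')\, m_1 = \mu'$ and $\sum_{m_1 \in \binom{y,z}{n}} m_1(\nu')\, m_1^* = \nu'$. Applied together with the $\mathbb{Z}$-bilinearity of $\mathfrak{Q}$, which lets me pull the scalars $m_1^*(\mu')$ or $m_1(\nu')$ through the arguments of $\mathfrak{Q}$, these identities collapse the first summand of the proposed formula (after pairing with $\mu'$) into $\Psi(x)\bigl[\mathfrak{Q}((z\mu)({\widetilde{\Phi}}) \otimes (y\mu')({\widetilde{\Phi}})) - \mathfrak{Q}((y\mu)({\widetilde{\Phi}}) \otimes (z\mu')({\widetilde{\Phi}}))\bigr] = \beta_3(\mu \wedge \mu')$, and the second summand (after pairing with $\nu'$) into $\beta_2(\mu \otimes \nu')$. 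The final two terms $y \otimes \pmb{\delta} z\mu - z \otimes \pmb{\delta} y\mu$ pair with $\mu' + \nu'$ to yield $\Psi(y)\pmb{\delta} \cdot [z\mu](\nu') - \Psi(z)\pmb{\delta}\cdot [y\mu](\nu')$. Summing these matches $\mathfrak{b}(\mu \wedge (\mu' + \nu'))$ term-for-term in the $\nu = 0$ instance of Definition~\ref{main-def}(\ref{main-def-b}).

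Part (\ref{b2-b}) is handled in exactly the same way, applied to $\nu \in D_n^{\mathbb Z}(U_0^*)$, after noting that $\mathfrak{b}$ is alternating, so $\mathfrak{b}(\nu \wedge (\mu' + \nu'))$ equals the $\mu = 0$ specialization of Definition~\ref{main-def}(\ref{main-def-b}), namely $-\beta_2(\mu' \otimes \nu) + \beta_1(\nu \wedge \nu') - \Psi(y)\pmb{\delta}\cdot [z\mu'](\nu) + \Psi(z)\pmb{\delta}\cdot [y\mu'](\nu)$. One then expands the proposed formula by the same template: the first line is designed to reproduce $-\beta_2(\mu' \otimes \nu)$ after invoking (\ref{FT}) on the $m_1^*$--$\mu'$ pairing, the second line is designed to reproduce $\beta_1(\nu \wedge \nu')$ after invoking (\ref{FT}) on the $m_1$--$\nu'$ pairing, and the first line ($-y \otimes \pmb{\delta} z(\nu) + z \otimes \pmb{\delta} y(\nu)$) yields the two $\pmb{\delta}$-terms directly.

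I do not anticipate any real obstacle: this is bookkeeping against the definitions and (\ref{FT}). The only points that need a little care are (i) keeping straight which argument of $\mathfrak{Q}$ the dummy $m_1$ sits in, so that (\ref{FT}) is invoked in the correct slot; (ii) using Observation~\ref{obvious}(\ref{obvious-f}) to permute arguments of $\mathfrak{Q}$ if needed; and (iii) applying the alternating property of $\mathfrak{b}$ consistently when moving a $\nu$-argument to the left in part~(\ref{b2-b}).
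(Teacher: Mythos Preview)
Your proposal is correct and follows essentially the same route as the paper. The paper expands $b_2(\mu)$ (resp.\ $b_2(\nu)$) in the dual basis of $B_1$, computes each coefficient as $\mathfrak b(\mu\wedge m_1)$ or $\mathfrak b(\mu\wedge m_1^*)$ via Definition~\ref{main-def}(\ref{main-def-b}), and then invokes (\ref{FT}) to collapse the $\pmb\delta$-sums; you instead pair the claimed formula against a general test element and invoke (\ref{FT}) to collapse the $\mathfrak Q$-sums---these are the same computation read in opposite directions. One minor slip in your write-up of part~(\ref{b2-b}): you label two different lines of the displayed formula as ``the first line''; the $m_1^*$-sum (which produces $-\beta_2(\mu'\otimes\nu)$) is the second line of the display, and the $m_1$-sum (which produces $\beta_1(\nu\wedge\nu')$) is the third.
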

\begin{proof}{\bf(\ref{b2-a}).} If $\mu\in {\operatorname{Sym}}_{n-1}^{{\mathbb Z}}U_0$, then  
\begin{longtable}{lll}$b_2(\mu)$&$=$&$
\sum\limits_{m_1\in \binom{y,z}{n-1}}[b_2(\mu)](m_1)\cdot m_1^*+\sum\limits_{m_1\in \binom{y,z}{n}}[b_2(\mu)](m_1^*)\cdot m_1$\vspace{5pt}\\
&$=$&$\sum\limits_{m_1\in \binom{y,z}{n-1}}\mathfrak b(\mu\wedge m_1)\cdot m_1^*+\sum\limits_{m_1\in \binom{y,z}{n}}
\mathfrak b(\mu\wedge m_1^*)
\cdot m_1$\vspace{5pt}\\
&$=$&$\begin{cases} \phantom{+}\sum\limits_{m_1\in \binom{y,z}{n-1}}\beta_3(\mu\wedge m_1)
\cdot m_1^*\vspace{5pt}\\
+\sum\limits_{m_1\in \binom{y,z}{n}}
\left(\vphantom{{\widetilde{\Phi}}}\beta_2 (\mu\otimes m_1^*)+
\Psi(y)\cdot\pmb \delta [z\mu](m_1^*)-\Psi(z)\cdot\pmb \delta [y\mu](m_1^*)\right)
\cdot m_1
\end{cases}$\vspace{5pt}\\
&$=$&$\begin{cases}\phantom{+}
\sum\limits_{m_1\in \binom{y,z}{n-1}}
\Psi(x)\cdot [\mathfrak Q((z\mu)({\widetilde{\Phi}})\otimes (ym_1)({\widetilde{\Phi}}))-\mathfrak Q((y\mu)({\widetilde{\Phi}})\otimes (zm_1)({\widetilde{\Phi}}))]
\cdot m_1^*\vspace{5pt}\\
+\sum\limits_{m_1\in \binom{y,z}{n}}
\Psi(x)\cdot \left[\vphantom{{\widetilde{\Phi}}}\mathfrak Q((z\mu)({\widetilde{\Phi}})\otimes y(m_1^*))-\mathfrak Q((y\mu)({\widetilde{\Phi}})\otimes z(m_1^*))\right ]\cdot m_1
\vspace{5pt}\\+
\Psi(y)\cdot\pmb \delta z\mu-\Psi(z)\cdot\pmb \delta y\mu
\end{cases}$\\\vspace{5pt}
&$=$&$\begin{cases}
\phantom{+}
x\otimes \sum\limits_{m_1\in \binom{y,z}{n-1}}
\left[\mathfrak Q((z\mu)({\widetilde{\Phi}})\otimes (ym_1)({\widetilde{\Phi}}))-\mathfrak Q((y\mu)({\widetilde{\Phi}})\otimes (zm_1)({\widetilde{\Phi}}))\right]
\cdot m_1^*
\vspace{5pt}\\
+x\otimes \sum\limits_{m_1\in \binom{y,z}{n}}
\left[\vphantom{{\widetilde{\Phi}}}\mathfrak Q((z\mu)({\widetilde{\Phi}})\otimes y(m_1^*))-\mathfrak Q((y\mu)({\widetilde{\Phi}})\otimes z(m_1^*))\right ]\cdot m_1
\vspace{5pt}\\+
y\otimes \pmb \delta z\mu- z\otimes\pmb \delta y\mu.\end{cases}$
\end{longtable}

\medskip\noindent{\bf(\ref{b2-b}).} In a similar manner, if $\nu\in D_n^{{\mathbb Z}}(U_0^*)$, then 
\begin{longtable}{lll}

$ b_2(\nu)$&=&$\sum\limits_{m_1\in \binom{y,z}{n-1}} [b_2(\nu)](m
_1)\cdot m_1^*+\sum\limits_{m_1\in \binom{y,z}{n}} [b_2(\nu)](m_1^*)\cdot m_1$\vspace{5pt}\\
&=&$\sum\limits_{m_1\in \binom{y,z}{n-1}} \mathfrak b(\nu\wedge m_1)
\cdot m_1^*+\sum\limits_{m_1\in \binom{y,z}{n}} \mathfrak b(\nu\wedge m_1^*)\cdot m_1$
\vspace{5pt}\\
&=&$\begin{cases}\phantom{+}\sum\limits_{m_1\in \binom{y,z}{n-1}} 
\left(\vphantom{{\widetilde{\Phi}}}-\beta_2(m_1\otimes \nu)-\Psi(y)\cdot\pmb\delta\cdot [zm_1](\nu)+\Psi(z)\cdot\pmb\delta\cdot [ym_1](\nu)\right)
\cdot m_1^*\vspace{5pt}\\+\sum\limits_{m_1\in \binom{y,z}{n}} \beta_1(\nu\wedge m_1^*)\cdot m_1\end{cases}$
\vspace{5pt}\\
&=&$\begin{cases}\phantom{+}\sum\limits_{m_1\in \binom{y,z}{n-1}} 
-
\Psi(x)\cdot \left[\mathfrak Q((zm_1)({\widetilde{\Phi}})\otimes y(\nu))-\mathfrak Q((ym_1)({\widetilde{\Phi}})\otimes z(\nu))\right]\cdot m_1^* \vspace{5pt}\\
-\Psi(y)\cdot\pmb\delta\cdot z(\nu)+\Psi(z)\cdot\pmb\delta\cdot y(\nu)
\vspace{5pt}\\+\sum\limits_{m_1\in \binom{y,z}{n}} 
\left(\vphantom{{\widetilde{\Phi}}}\Psi(x)\cdot \mathfrak Q(z(\nu)\otimes y(m_1^*))-\Psi(x)\cdot \mathfrak Q(y(\nu)\otimes z(m_1^*))
\right)
\cdot m_1\end{cases}$
\vspace{5pt}\\
&=&$\begin{cases} -y\otimes\pmb\delta\cdot z(\nu)+z\otimes\pmb\delta\cdot y(\nu)
\vspace{5pt}\\
-x\otimes \sum\limits_{m_1\in \binom{y,z}{n-1}} 
\left[\mathfrak Q((zm_1)({\widetilde{\Phi}})\otimes y(\nu))-\mathfrak Q((ym_1)({\widetilde{\Phi}})\otimes z(\nu))\right]\cdot m_1^* \vspace{5pt}\\
+x\otimes \sum\limits_{m_1\in \binom{y,z}{n}} 
\left(\vphantom{{\widetilde{\Phi}}} \mathfrak Q(z(\nu)\otimes y(m_1^*))-\mathfrak Q(y(\nu)\otimes z(m_1^*))
\right)
\cdot m_1.\end{cases}$
\end{longtable}
\end{proof}

\begin{lemma}\label{comm-diag} The ${\mathfrak R}$-module homomorphisms $\tau_i:B_i\to G_i$ of Definition {\rm\ref{tau}} give rise to a commutative diagram$:$

$$\xymatrix{0\ar[r]&{\mathfrak R}\ar[r]^{b_3}\ar[d]^{\tau_3}&B_2\ar[r]^{b_2}\ar[d]^{\tau_2}&B_1\ar[r]^{b_1}\ar[d]^{\tau_1}&{\mathfrak R}\ar[d]^{=}\vspace{5pt}\\
0\ar[r]&G_3\ar[r]^{g_3}&G_2\ar[r]^{g_2}&G_1\ar[r]^{g_1}&{\mathfrak R}.}$$

\end{lemma}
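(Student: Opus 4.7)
The diagram breaks into three commuting squares, which I verify independently, working right to left. The strategy throughout is to compute both compositions on generators of the source and match using Definition~\ref{tau}, the explicit decomposition of $b_2$ from Observation~\ref{b2}, and the adjoint/symmetry identities of Observation~\ref{obvious}.

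The rightmost square, $g_1 \circ \tau_1 = b_1$, is nearly tautological. Since $g_1$ is induced by $\Psi\colon \operatorname{Sym}_n^{{\mathbb Z}}U \to {\mathfrak R}$, applying $\Psi$ to the formulas $\tau_1(\nu) = xq(\nu)$ and $\tau_1(\mu) = \pmb\delta\mu - xq(\mu({\widetilde{\Phi}}))$ from Definition~\ref{tau}(b) reproduces $b_1(\nu) = \Psi(x)\Psi(q(\nu))$ and $b_1(\mu) = \pmb\delta\Psi(\mu) - \Psi(x)\Psi(q(\mu({\widetilde{\Phi}})))$ from Definition~\ref{main-def}(c) on the nose.

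The middle square, $\tau_1 \circ b_2 = g_2 \circ \tau_2$, is the central calculation. I verify it generator by generator for $\mu \in \operatorname{Sym}_{n-1}^{{\mathbb Z}} U_0$ and $\nu \in D_n^{{\mathbb Z}}(U_0^*)$. On the $\tau_1 \circ b_2$ side, the explicit decomposition of $b_2(\mu)$ and $b_2(\nu)$ from Observation~\ref{b2} lets me apply $\tau_1$ componentwise by ${\mathfrak R}$-linearity, producing an element of ${\mathfrak R}\otimes\operatorname{Sym}_n^{{\mathbb Z}}U$ that is an explicit sum over monomials in $\binom{y,z}{n-1}\cup\binom{y,z}{n}$ together with two pure terms arising from $\Psi(y)\pmb\delta(z\mu) - \Psi(z)\pmb\delta(y\mu)$. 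On the $g_2 \circ \tau_2$ side, I unfold the Koszul maps in $\tau_2(\mu), \tau_2(\nu)$ via Definition~\ref{tau}(c) and apply $1\otimes\Psi\otimes 1$, producing six explicit terms involving $xq((y\mu)({\widetilde{\Phi}})), xq((z\mu)({\widetilde{\Phi}}))$, $yq((z\mu)({\widetilde{\Phi}})), zq((y\mu)({\widetilde{\Phi}}))$, and $y\mu, z\mu$. The $\pmb\delta^2$-terms (involving $y\mu, z\mu$) and the $\pmb\delta\Psi(y)\cdot xq$, $\pmb\delta\Psi(z)\cdot xq$ terms match directly between the two sides, the matching being essentially a Koszul-sign bookkeeping. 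The delicate matching is for the remaining $\pmb\delta\Psi(x)\cdot yq((z\mu)({\widetilde{\Phi}}))$ and $\pmb\delta\Psi(x)\cdot zq((y\mu)({\widetilde{\Phi}}))$ terms: on the $g_2\tau_2$ side they contain the full adjoint $q$ applied to elements of $D_{n-1}^{{\mathbb Z}}(U^*)$, while on the $\tau_1 b_2$ side they are only represented as partial sums over $\binom{y,z}{n-1}\cup\binom{y,z}{n}$ involving $\mathfrak Q$ and $q(m^*), q(m({\widetilde{\Phi}}))$. Reassembling those monomial sums into the full adjoint expressions uses the dual-basis identities (\ref{FT}), the symmetry identities Observation~\ref{obvious}(c)--(f), and the defining property $x({\widetilde{\Phi}}) = \Phi$ of ${\widetilde{\Phi}}$ from Data~\ref{data-3}. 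The case $\nu \in D_n^{{\mathbb Z}}(U_0^*)$ is entirely analogous via Observation~\ref{b2}(b).

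The leftmost square, $\tau_2 \circ b_3 = g_3 \circ \tau_3$, is verified as follows. First, $b_3(1) \in B_2$ is the element representing $b_1 \in \operatorname{Hom}_{{\mathfrak R}}(B_1, {\mathfrak R}) = B_1^* = B_2$; its coordinates in the decomposition $B_2 = {\mathfrak R}\otimes(\operatorname{Sym}_{n-1}^{{\mathbb Z}}U_0 \oplus D_n^{{\mathbb Z}}(U_0^*))$ are read off by pairing $b_1$ with the dual bases of $\binom{y,z}{n-1}$ and $\binom{y,z}{n}$ and inserting the formulas from Definition~\ref{main-def}(c). Applying $\tau_2$ componentwise yields an explicit element of ${\mathfrak R}\otimes L_{1,n}$. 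Separately, $g_3(\tau_3(1)) = \pmb\delta^2\cdot g_3(x\wedge y\wedge z)$ is computed by unfolding $\operatorname{Kos}^{\Psi}\circ\kappa\circ q\circ\operatorname{ev}^*$ from Data~\ref{Mar-5-2014}(c), producing a sum indexed by a dual basis of $\operatorname{Sym}_{n-1}^{{\mathbb Z}}U$ and $D_{n-1}^{{\mathbb Z}}(U^*)$. Matching the two expressions again uses Observation~\ref{obvious} and $x({\widetilde{\Phi}})=\Phi$, in a manner parallel to the middle square. The main obstacle, as the authors themselves flag, is the middle square: while each step is elementary, the term-by-term matching is lengthy, and the crucial creative step is recognizing that the partial sums of the form $\sum_{m_1\in\binom{y,z}{n}}\mathfrak Q((z\mu)({\widetilde{\Phi}})\otimes y(m_1^*))\cdot m_1$ arising on the $\tau_1 b_2$ side can be reassembled, together with companion sums contributing the ``$x$-components,'' into the full adjoint expressions $y\cdot q((z\mu)({\widetilde{\Phi}}))$ that appear naturally on the $g_2\tau_2$ side.
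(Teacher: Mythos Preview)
Your proposal is correct and follows essentially the same strategy as the paper's proof: verify the three squares independently, with the rightmost immediate and the middle square as the computational heart, using Observation~\ref{b2} to expand $b_2$ and the identities of Observation~\ref{obvious} together with $x({\widetilde{\Phi}})=\Phi$ to reconcile the two sides.

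One tactical difference is worth noting. Where you describe \emph{reassembling} the partial sums over $\binom{y,z}{n-1}\cup\binom{y,z}{n}$ into full adjoint expressions such as $yq((z\mu)({\widetilde{\Phi}}))$, the paper takes the dual route: it reduces the middle square to an equality $X=Y$ of elements of ${\mathfrak R}\otimes_{{\mathbb Z}}\operatorname{Sym}_n^{{\mathbb Z}}U$, then verifies $X(\nu)=Y(\nu)$ by \emph{evaluation} on the basis $\{m_2^* : m_2\in\binom{y,z}{n}\}\cup\{(xm_2)^* : m_2\in\binom{x,y,z}{n-1}\}$ of $D_n^{{\mathbb Z}}(U^*)$. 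For the second family the paper extends the $\binom{y,z}{*}$ sums to $\binom{x,y,z}{*}$ sums and splits $Y$ into four pieces $Y_1,\dots,Y_4$; the identity $\sum_{m_1}[q(m_1^*)](m_2^*)\cdot m_1 = q(m_2^*)$ forces $Y_1+Y_3=0$, the relation $x({\widetilde{\Phi}})=\Phi$ with Observation~\ref{obvious}.\ref{obvious-e} forces $Y_2=0$, and the survivor $Y_4$ equals $X((xm_2)^*)$. The leftmost square is handled the same way, by evaluating both sides against $1\otimes 1\otimes \nu$ for $\nu\in D_n^{{\mathbb Z}}(U^*)$. Your reassembly approach and the paper's evaluation approach are equivalent in principle, but evaluation makes the individual cancellations (especially $Y_1+Y_3=0$) easier to isolate and check.
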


\begin{proof} We first show that $b_1=g_1\circ\tau_1$.
If $\nu\in D_{n-1}^{{\mathbb Z}}(U_0^*)$ and $\mu\in {\operatorname{Sym}}_n^{{\mathbb Z}}U$, then, according to Definition~\ref{main-def}.\ref{main-def-c},
\begin{equation}\label{right-side'}b_1(\nu+\mu)=\Psi(x)\cdot \Psi(q(\nu))+\pmb\delta \cdot \Psi(\mu)-\Psi(x)\cdot \Psi(q(\mu({\widetilde{\Phi}}))).\end{equation}
On the other hand, $g_1$ is the restriction to $G_1$ of $1\otimes \Psi:{\mathfrak R}\otimes_{{\mathbb Z}} {\operatorname{Sym}}_{n}^{{\mathbb Z}}U\to {\mathfrak R}$; so, $(g_1\circ \tau_1)(\nu+\mu)$ is given by the right side of (\ref{right-side'}).

We show that $g_2\circ \tau_2=\tau_1\circ b_2$  for elements from each of the summands of the  module $$B_2={\mathfrak R}\otimes_{{\mathbb Z}}({\operatorname{Sym}}_{n-1}^{{\mathbb Z}}U_0\oplus D_{n}^{{\mathbb Z}} (U_0^*)).$$  
We first take $\mu\in  {\operatorname{Sym}}_{n-1}^{{\mathbb Z}}U_0$. Routine calculations yield
\begin{equation}\label{12.1.1}(g_2\circ \tau_2)(\mu)=\pmb\delta \begin{cases}
 \phantom{+}y\otimes \left(\pmb \delta z\mu-xq([z\mu]({\widetilde{\Phi}}))\right)
-z\otimes \left(\pmb \delta y\mu-xq([y\mu]({\widetilde{\Phi}}))\right)\vspace{5pt}\\
+x \otimes \left(yq((z\mu)({\widetilde{\Phi}}))-zq([y\mu]({\widetilde{\Phi}})) \right).\end{cases}\end{equation}
Indeed,
\begin{longtable}{lll}$(g_2\circ \tau_2)(\mu)$&$=$&$g_2\left(\pmb\delta\kappa(x\wedge z\otimes q((y\mu)({\widetilde{\Phi}})))-\pmb\delta\kappa (x \wedge y \otimes q((z\mu)({\widetilde{\Phi}})))-\pmb \delta^2 \kappa (y\wedge z\otimes \mu)\right)$\vspace{5pt}\\
&$=$&$\pmb\delta g_2\left(\begin{array}{lll}
\phantom{-}z\otimes xq((y\mu)({\widetilde{\Phi}}))&-x\otimes zq((y\mu)({\widetilde{\Phi}}))\vspace{5pt}\\
-y \otimes xq((z\mu)({\widetilde{\Phi}})))&+x \otimes yq((z\mu)({\widetilde{\Phi}})))\vspace{5pt}\\
-\pmb \delta  (z\otimes y\mu)&+\pmb \delta (y\otimes z\mu)&\text{in ${\mathfrak R}\otimes_{{\mathbb Z}} U\otimes_{{\mathbb Z}} {\operatorname{Sym}}_n^{{\mathbb Z}}U$}\end{array}\right)$\vspace{5pt}\\
&$=$&$\pmb\delta \left(\begin{array}{lll}
\phantom{-}z\otimes xq((y\mu)({\widetilde{\Phi}}))&-x\otimes zq((y\mu)({\widetilde{\Phi}}))\vspace{5pt}\\
-y \otimes xq((z\mu)({\widetilde{\Phi}})))&+x \otimes yq((z\mu)({\widetilde{\Phi}})))\vspace{5pt}\\
-\pmb \delta  (z\otimes y\mu)&+\pmb \delta (y\otimes z\mu)&\text{in ${\mathfrak R}\otimes_{{\mathbb Z}} {\operatorname{Sym}}_n^{{\mathbb Z}}U$}\end{array}\right).$\end{longtable}
Use  Observation~\ref{b2} to see that 
$(\tau_1\circ b_2)(\mu)$ is equal to
\begin{equation}\label{12.1.2} 
\begin{cases}
\phantom{+}
x\otimes \sum\limits_{m_1\in \binom{y,z}{n-1}}
\left [\mathfrak Q((z\mu)({\widetilde{\Phi}})\otimes (ym_1)({\widetilde{\Phi}}))-\mathfrak Q((y\mu)({\widetilde{\Phi}})\otimes (zm_1)({\widetilde{\Phi}}))\right]\cdot xq(m_1^*)\vspace{5pt}\\
+x\otimes\sum\limits_{m_1\in \binom{y,z}{n}}
\left[
\mathfrak Q((z\mu)({\widetilde{\Phi}})\otimes y(m_1^*))-\mathfrak Q((y\mu)({\widetilde{\Phi}})\otimes z(m_1^*))\right]
\cdot [\pmb\delta m_1-xq(m_1({\widetilde{\Phi}}))]\vspace{5pt}\\
+\pmb\delta y\otimes(\pmb\delta z\mu-xq([z\mu]({\widetilde{\Phi}})))-\pmb\delta z\otimes(\pmb\delta y\mu-xq([y\mu]({\widetilde{\Phi}}))).
\end{cases}
\end{equation}
Compare (\ref{12.1.1}) and (\ref{12.1.2}). It suffices to show that the elements
 $$\begin{array}{lll}X&=&\pmb \delta\left(yq([z\mu]({\widetilde{\Phi}}))
- zq([y\mu]({\widetilde{\Phi}}))\right)\quad\text{and}\vspace{5pt}\\ 
Y&=&\begin{cases}
+\sum\limits_{m_1\in \binom{y,z}{n-1}}
\left[\mathfrak Q((z\mu)({\widetilde{\Phi}})\otimes (ym_1)({\widetilde{\Phi}}))-\mathfrak Q((y\mu)({\widetilde{\Phi}})\otimes (zm_1)({\widetilde{\Phi}}))\right]\cdot xq(m_1^*)\vspace{5pt}\\
+\sum\limits_{m_1\in \binom{y,z}{n}}
\left[
\mathfrak Q((z\mu)({\widetilde{\Phi}})\otimes y(m_1^*))-\mathfrak Q((y\mu)({\widetilde{\Phi}})\otimes z(m_1^*))
\right]
\cdot [\pmb\delta m_1-xq(m_1({\widetilde{\Phi}}))]
\end{cases}\end{array}$$ of ${\mathfrak R}\otimes_{{\mathbb Z}}{\operatorname{Sym}}_{n}^{{\mathbb Z}} U$ are equal. To that end, we compare 
$X(\nu)$ and $Y(\nu)$ for $\nu\in D_n^{{\mathbb Z}}(U^*)$. Furthermore, it will suffice to compare   
$X(\nu)$ and $Y(\nu)$ for $\nu=m_2^*$ with $m_2\in \binom{y,z}{n}$ and $\nu=(xm_2)^*$ for $m_2\in \binom{x,y,z}{n-1}$ because $\binom{x,y,z}{n}=\binom{y,z}{n}\cup x\binom{x,y,z}{n-1}$ and $\{m^*\mid m\in \binom{x,y,z}{n}\}$ is a basis for $D_n^{{\mathbb Z}}(U^*)$. 

First take $\nu=m_2^*$ with $m_2\in \binom{y,z}{n}$. Observe that $$[xq(m_1^*)](m_2^*)=0, \quad [xq(m_1({\widetilde{\Phi}}))](m_2^*)=0,\quad\text{and}\quad \textstyle \sum\limits_{m_1\in \binom{y,z}{n}} m_1((m_2^*))\cdot m_1^*=m_2^*.$$ It follows that $$Y(m_2^*)=\pmb \delta\left(\mathfrak Q((z\mu)({\widetilde{\Phi}})\otimes y(m_2^*))-\mathfrak Q((y\mu)({\widetilde{\Phi}})\otimes z(m_2^*))\right)=X(m_2^*).$$

Now take  $\nu=(xm_2)^*$ for $m_2\in \binom{x,y,z}{n-1}$. We see that 
$$Y(\nu)=\begin{cases}
+\sum\limits_{m_1\in \binom{y,z}{n-1}}
\left[\mathfrak Q((z\mu)({\widetilde{\Phi}})\otimes (ym_1)({\widetilde{\Phi}}))-\mathfrak Q((y\mu)({\widetilde{\Phi}})\otimes (zm_1)({\widetilde{\Phi}}))\right]\cdot [xq(m_1^*)]((xm_2)^*)\vspace{5pt}\\
+\sum\limits_{m_1\in \binom{y,z}{n}}
\left[
\mathfrak Q((z\mu)({\widetilde{\Phi}})\otimes y(m_1^*))-\mathfrak Q((y\mu)({\widetilde{\Phi}})\otimes z(m_1^*))
\right]
\cdot [\pmb\delta m_1-xq(m_1({\widetilde{\Phi}}))]((xm_2)^*).
\end{cases}$$
Use the facts $x((xm_2)^*)=m_2^*$ and $m_1(xm_2)^*=0$ to re-write $Y(\nu)$ as $Y(\nu)=\sum_{i=1}^4Y_i$, with
$$\begin{array}{ll}
Y_1=&\sum\limits_{m_1\in \binom{x,y,z}{n-1}}
\left[\mathfrak Q((z\mu)({\widetilde{\Phi}})\otimes (ym_1)({\widetilde{\Phi}}))-\mathfrak Q((y\mu)({\widetilde{\Phi}})\otimes (zm_1)({\widetilde{\Phi}}))\right]\cdot [q(m_1^*)](m_2^*),\vspace{5pt}\\
Y_2=&-\sum\limits_{m_1\in \binom{x,y,z}{n-2}}
\left[\mathfrak Q((z\mu)({\widetilde{\Phi}})\otimes (yxm_1)({\widetilde{\Phi}}))-\mathfrak Q((y\mu)({\widetilde{\Phi}})\otimes (zxm_1)({\widetilde{\Phi}}))\right]\cdot [q((xm_1)^*)](m_2^*),\vspace{5pt}\\
Y_3=&\sum\limits_{m_1\in \binom{x,y,z}{n}}
\left[
\mathfrak Q((z\mu)({\widetilde{\Phi}})\otimes y(m_1^*))-\mathfrak Q((y\mu)({\widetilde{\Phi}})\otimes z(m_1^*))
\right]
\cdot [-q(m_1({\widetilde{\Phi}}))](m_2^*), \text{ and}\vspace{5pt}\\
Y_4=&-\sum\limits_{m_1\in \binom{x,y,z}{n-1}}
\left[
\mathfrak Q((z\mu)({\widetilde{\Phi}})\otimes y((xm_1)^*))-\mathfrak Q((y\mu)({\widetilde{\Phi}})\otimes z((xm_1)^*))
\right]
\cdot [-q((xm_1)({\widetilde{\Phi}}))](m_2^*).
\end{array}$$
We know that \begin{equation}\label{fourier-trick}\sum\limits_{m_1\in \binom{x,y,z}{n-1}}[q(m_1^*)](m_2^*)\cdot m_1
=\sum\limits_{m_1\in \binom{x,y,z}{n-1}}[q(m_2^*)](m_1^*)\cdot m_1=q(m_2^*);$$ hence,
$$Y_1=\mathfrak Q((z\mu)({\widetilde{\Phi}})\otimes (yq(m_2^*))({\widetilde{\Phi}}))-\mathfrak Q((y\mu)({\widetilde{\Phi}})\otimes (zq(m_2^*))({\widetilde{\Phi}})).\end{equation}
A similar trick yields
$$Y_3=-
\left[
\mathfrak Q((z\mu)({\widetilde{\Phi}})\otimes y([q(m_2^*)]({\widetilde{\Phi}})))-\mathfrak Q((y\mu)({\widetilde{\Phi}})\otimes z([q(m_2^*)]({\widetilde{\Phi}})))
\right];$$ thus, $Y_1+Y_3=0$. In $Y_2$,
$$\begin{array}{ll}
&\mathfrak Q((z\mu)({\widetilde{\Phi}})\otimes (yxm_1)({\widetilde{\Phi}}))-\mathfrak Q((y\mu)({\widetilde{\Phi}})\otimes (zxm_1)({\widetilde{\Phi}}))\vspace{5pt}\\=&
\mathfrak Q((z\mu)({\widetilde{\Phi}})\otimes (ym_1)(\Phi))-\mathfrak Q((y\mu)({\widetilde{\Phi}})\otimes (zm_1)(\Phi))\vspace{5pt}\\
=&\pmb\delta \left([(z\mu)({\widetilde{\Phi}})](ym_1)-[(y\mu)({\widetilde{\Phi}})](zm_1)\right)=0;\end{array}$$and therefore, $Y_2=0$. We conclude that $Y(\nu)=Y_4$. As we simplify $Y_4$, we see that
$$[q((xm_1)({\widetilde{\Phi}}))](m_2^*)=[q((m_1)(\Phi))](m_2^*)=\pmb\delta m_1(m_2^*)=\pmb\delta \chi(m_1=m_2).$$ The final equality holds because $m_1$ and $m_2$ are both monomials in $\binom{x,y,z}{n-1}$. It follows that
$$Y(\nu)=Y_4=
\pmb \delta \left[
\mathfrak Q((z\mu)({\widetilde{\Phi}})\otimes y((xm_2)^*))-\mathfrak Q((y\mu)({\widetilde{\Phi}})\otimes z((xm_2)^*))
\right]=X((xm_2)^*)=X(\nu),$$
and $g_2\circ \tau_2=\tau_1\circ b_2$  for elements of the summand ${\mathfrak R}\otimes_{{\mathbb Z}} {\operatorname{Sym}}_{n-1}^{{\mathbb Z}}U_0$ of $B_2$.

Now we show that $g_2\circ \tau_2=\tau_1\circ b_2$  for elements $\nu\in B_2$ with $\nu\in D_n^{{\mathbb Z}}(U^*_0)$.
We compute
$$\begin{array}{l}\phantom{=}(g_2\circ \tau_2)(\nu)
=\pmb \delta g_2
\left(\kappa (x\wedge z\otimes q(y(\nu)))-\kappa (x\wedge y\otimes q(z(\nu)))\vphantom{{\widetilde{\Phi}}}\right)\vspace{5pt}\\
=\pmb \delta g_2\left(\left(
z\otimes xq(y(\nu))-x\otimes zq(y(\nu))
-y\otimes xq(z(\nu))+x\otimes yq(z(\nu))\vphantom{{\widetilde{\Phi}}}
\right)\in \textstyle\bigwedge^1_{{\mathbb Z}}U\otimes_{{\mathbb Z}}{\operatorname{Sym}}_n^{{\mathbb Z}}U\right)\vspace{5pt}\\
=\pmb \delta \left(
z\otimes xq(y(\nu))-x\otimes zq(y(\nu))
-y\otimes xq(z(\nu))+x\otimes yq(z(\nu))
\vphantom{{\widetilde{\Phi}}}\right)\in {\mathfrak R}\otimes_{{\mathbb Z}}{\operatorname{Sym}}_n^{{\mathbb Z}}U.\end{array}$$
Use Observation~\ref{b2} to see that 
$$(\tau_1\circ b_2)(\nu)=\left\{\begin{array}{l}
-y\pmb\delta\otimes xq(z(\nu))+z\pmb\delta\otimes xq(y(\nu))\vspace{5pt}\\
-x\otimes  \sum\limits_{m_1\in \binom{y,z}{n-1}}\left[\mathfrak Q((zm_1)({\widetilde{\Phi}})\otimes y(\nu))-\mathfrak Q((ym_1)({\widetilde{\Phi}})\otimes z(\nu))\right] \cdot xq(m_1^*)\vspace{5pt}\\
+x\otimes \sum\limits_{m_1\in \binom{y,z}{n}} \left[\mathfrak Q(z(\nu)\otimes y(m_1^*))- \mathfrak Q(y(\nu)\vphantom{{\widetilde{\Phi}}}\otimes z(m_1^*))\right]
\cdot [\pmb\delta\cdot m_1-xq(m_1({\widetilde{\Phi}}))].
\end{array}\right.$$
Compare $\pmb \delta(g_2\circ \tau_2)(\nu)$ and $(\tau_1\circ b_2)(\nu)$.
In order to prove that these two expressions are equal, 
it suffices to show that $X=Y$ for the elements $X=\pmb\delta(yq(z(\nu))-zq(y(\nu)))$
and

$$Y=\left\{\begin{array}{l}
-\sum\limits_{m_1\in \binom{y,z}{n-1}}\left[\mathfrak Q((zm_1)({\widetilde{\Phi}})\otimes y(\nu))-\mathfrak Q((ym_1)({\widetilde{\Phi}})\otimes z(\nu))\right] \cdot xq(m_1^*)\vspace{5pt}\\
+\sum\limits_{m_1\in \binom{y,z}{n}} \left[\mathfrak Q(z(\nu)\otimes y(m_1^*))- \mathfrak Q(y(\nu)\vphantom{{\widetilde{\Phi}}}\otimes z(m_1^*))\right]
\cdot [\pmb\delta\cdot m_1-xq(m_1({\widetilde{\Phi}}))]
\end{array}\right.$$ of  ${\mathfrak R}\otimes_{{\mathbb Z}} {\operatorname{Sym}}^{{\mathbb Z}}_nU$. Apply $m_2^*$, with $m_2\in\binom{y,z}{n}$.
We have $x(m_2^*)=0$; so, 
$$Y(m_2^*)=\sum\limits_{m_1\in \binom{y,z}{n}} \left[\mathfrak Q(z(\nu)\otimes y(m_1^*))- \mathfrak Q(y(\nu)\vphantom{{\widetilde{\Phi}}}\otimes z(m_1^*))\right]\cdot \pmb\delta\cdot m_1(m_2^*)$$
$$= \pmb\delta\left[\mathfrak Q(z(\nu)\otimes y(m_2^*))- \mathfrak Q(y(\nu)\vphantom{{\widetilde{\Phi}}}\otimes z(m_2^*))\right]=X(m_2^*).$$Apply $(xm_2)^*$ with $m_2\in \binom{x,y,z}{n-1}$. We know that $m_1((xm_2)^*)=0$ for $m_1\in \binom{y,z}{n}$; but $x((xm_2)^*)=m_2^*$. So,
$$Y((xm_2)^*)=\left\{\begin{array}{l}
-\sum\limits_{m_1\in \binom{y,z}{n-1}}\left[\mathfrak Q((zm_1)({\widetilde{\Phi}})\otimes y(\nu))-\mathfrak Q((ym_1)({\widetilde{\Phi}})\otimes z(\nu))\right] \cdot [q(m_1^*)](m_2^*)\vspace{5pt}\\
+\sum\limits_{m_1\in \binom{y,z}{n}} \left[\mathfrak Q(z(\nu)\otimes y(m_1^*))- \mathfrak Q(y(\nu)\vphantom{{\widetilde{\Phi}}}\otimes z(m_1^*))\right]
\cdot [-[q(m_1({\widetilde{\Phi}}))] (m_2^*)].
\end{array}\right.$$Re-write $Y((xm_2)^*)$ as $\sum_{i=1}^4Y_i$ with 
$$\begin{array}{lll}
Y_1&=&-\sum\limits_{m_1\in \binom{x,y,z}{n-1}}\left[\mathfrak Q((zm_1)({\widetilde{\Phi}})\otimes y(\nu))-\mathfrak Q((ym_1)({\widetilde{\Phi}})\otimes z(\nu))\right] \cdot [q(m_1^*)](m_2^*)\vspace{5pt}\\
Y_2&=&+\sum\limits_{m_1\in \binom{x,y,z}{n-2}}\left[\mathfrak Q((zxm_1)({\widetilde{\Phi}})\otimes y(\nu))-\mathfrak Q((yxm_1)({\widetilde{\Phi}})\otimes z(\nu))\right] \cdot [q((xm_1)^*)](m_2^*)\vspace{5pt}\\
Y_3&=&-\sum\limits_{m_1\in \binom{x,y,z}{n}} \left[\mathfrak Q(z(\nu)\otimes y(m_1^*))- \mathfrak Q(y(\nu)\vphantom{{\widetilde{\Phi}}}\otimes z(m_1^*))\right]
\cdot [q(m_1({\widetilde{\Phi}}))] (m_2^*)\vspace{5pt}\\
Y_4&=&+\sum\limits_{m_1\in \binom{x,y,z}{n-1}} \left[\mathfrak Q(z(\nu)\otimes y((xm_1)^*))- \mathfrak Q(y(\nu)\vphantom{{\widetilde{\Phi}}}\otimes z((xm_1)^*))\right]
\cdot [q((xm_1)({\widetilde{\Phi}}))] (m_2^*).
\end{array}$$Use the trick of (\ref{fourier-trick}) to see that $Y_1+Y_3=0$ and use the defining property $x({\widetilde{\Phi}})=\Phi$ of ${\widetilde{\Phi}}$ together with parts (\ref{obvious-a}) and (\ref{obvious-e}) of Observation~\ref{obvious} to see that $$Y_2=\sum\limits_{m_1\in \binom{x,y,z}{n-2}}\left[
\pmb\delta (zm_1)(y(\nu))-\pmb\delta (ym_1)(z(\nu))\right] \cdot [q((xm_1)^*)](m_2^*)=0.$$
Of course, $[q((xm_1)({\widetilde{\Phi}}))] (m_2^*)=\pmb\delta m_1(m_2^*)$; and therefore
$$\begin{array}{lll}Y((xm_2)^*)&=&Y_4=\sum\limits_{m_1\in \binom{x,y,z}{n-1}} \left[\mathfrak Q(z(\nu)\otimes y((xm_1)^*))- \mathfrak Q(y(\nu)\vphantom{{\widetilde{\Phi}}}\otimes z((xm_1)^*))\right]
\cdot \pmb\delta m_1(m_2^*)\vspace{5pt}\\
&=&\pmb\delta \left[\mathfrak Q(z(\nu)\otimes y((xm_2)^*))- \mathfrak Q(y(\nu)\vphantom{{\widetilde{\Phi}}}\otimes z((xm_2)^*))\right]=X((xm_2)^*).\end{array}$$
This completes the proof that $g_2\circ \tau_2=\tau_1\circ b_2$.

Finally, we prove that $g_3\circ \tau_3=\tau_2\circ b_3$. 
Observe that $(g_3\circ \tau_3)(1)$  and $(\tau_2\circ b_3)(1)$ both are elements of ${\mathfrak R}\otimes_{{\mathbb Z}}U\otimes {\operatorname{Sym}}_n^{{\mathbb Z}}U$. We prove that  $(g_3\circ \tau_3)(1)=(\tau_2\circ b_3)(1)$ by showing that
\begin{equation}\label{nu}[(g_3\circ \tau_3)(1)](1\otimes 1\otimes \nu) = [(\tau_2\circ b_3)(1)](1\otimes 1\otimes \nu)
\qquad\text{in }{\mathfrak R}\otimes_{{\mathbb Z}}U,\end{equation} for each  $\nu\in D_n^{{\mathbb Z}}U^*$.
 We see that the left side of (\ref{nu})
$$\begin{array}{ll}=&\pmb \delta^2\sum\limits_{m\in \binom{x,y,z}{n-1}}\left(\begin{array}{l}
\phantom{+}xm\otimes y\otimes [zq(m^*)](\nu)-ym\otimes x\otimes [zq(m^*)](\nu)-xm\otimes z\otimes [yq(m^*)](\nu)\vspace{5pt}\\
+zm\otimes x\otimes [yq(m^*)](\nu)+ym\otimes z\otimes [xq(m^*)](\nu)-zm\otimes y\otimes [xq(m^*)](\nu)\end{array}\right)\vspace{5pt}\\
=&\pmb \delta^2\left(\vphantom{\widetilde{\Phi}}[zq(y(\nu))-yq(z(\nu))]\otimes x
+[xq(z(\nu))-zq(x(\nu))]\otimes y
+[y(q(x(\nu))-xq(y(\nu))]\otimes z\right).\end{array}$$
On the other hand, the right side of (\ref{nu}) is 
\begin{longtable}{l}
$\phantom{=}\begin{cases}\phantom{+}\pmb\delta\sum\limits_{m\in \binom{y,z}{n-1}}xq(m^*)\otimes 
\left(\begin{array}{l}
\phantom{+}z\otimes [xq((ym)({\widetilde{\Phi}}))](\nu)-x\otimes [zq((ym)({\widetilde{\Phi}}))](\nu)\vspace{5pt}\\
-y\otimes [xq((zm)({\widetilde{\Phi}}))](\nu)+x\otimes [yq((zm)({\widetilde{\Phi}}))](\nu)\vspace{5pt}\\
-\pmb \delta  z\otimes [ym](\nu)+\pmb \delta  y\otimes [zm](\nu)\end{array}\right)\vspace{5pt}\\
+\pmb\delta\sum\limits_{m\in \binom{y,z}n}[\pmb \delta m -xq(m({\widetilde{\Phi}}))]\otimes 
 \left(\vphantom{{\widetilde{\Phi}}}\begin{array}{l} 
\phantom{+}z  \otimes [xq(y(m^*))](\nu)-x  \otimes [zq(y(m^*))](\nu)\vspace{5pt}\\
-y \otimes [xq(z(m^*))](\nu)+x \otimes [yq(z(m^*))](\nu)
\end{array}
\right)
\end{cases}$\vspace{5pt}\\

$=\pmb\delta\begin{cases}
\phantom{+}xq\left((y[q(x(\nu))])({\widetilde{\Phi}})\right)\otimes z
-xq\left((y[q(z(\nu))])({\widetilde{\Phi}})\right)\otimes x
\vspace{5pt}\\
-xq\left((z[q(x(\nu))])({\widetilde{\Phi}})\right)\otimes y
+xq\left((z[q(y(\nu))])({\widetilde{\Phi}})\right)\otimes x
\vspace{5pt}\\
-\pmb \delta  xq(y(\nu))\otimes z+\pmb \delta  xq(z(\nu))\otimes y\vspace{5pt}\\
+\pmb \delta yq(x(\nu)) \otimes z  -\pmb \delta yq(z(\nu)) \otimes x  
\vspace{5pt}\\
-\pmb \delta zq(x(\nu)) \otimes y+\pmb \delta zq(y(\nu)) \otimes x  
\vspace{5pt}\\
-xq([yq(x(\nu))]({\widetilde{\Phi}}))\otimes z +xq([yq(z(\nu))]({\widetilde{\Phi}}))\otimes x \vspace{5pt}\\
+xq([zq(x(\nu))]({\widetilde{\Phi}}))\otimes y-xq([zq(y(\nu))]({\widetilde{\Phi}}))\otimes x  
\end{cases}$\vspace{5pt}\\

$=\pmb \delta^2 \left(\vphantom{{\widetilde{\Phi}}}[zq(y(\nu))-yq(z(\nu))] \otimes x+[xq(z(\nu))-zq(x(\nu))]\otimes y+[yq(x(\nu))-xq(y(\nu))]\otimes z\right)$.
\end{longtable}
The two sides of (\ref{nu}) agree and the proof is complete.\end{proof}

\begin{lemma}\label{main-lemma}Retain the notation and hypotheses of Lemma~{\rm\ref{comm-diag}}. For $0\le i\le 3$, define $E_i=\tau_i(B_i)$ and for $1\le i\le 3$, let $e_i$ be the restriction of $g_i:G_i\to G_{i-1}$ to $E_i$.  The following statements hold.

\begin{enumerate}[\rm(a)]
\item\label{main-lemma-a}Each module $E_i$ is a free ${\mathfrak R}$-module.
\begin{enumerate}[\rm(1)]
\item\label{main-lemma-a-i}The elements 
\begin{equation}\label{mar-3-14.1'}\textstyle\{\tau_1(m^*)\mid m\in\binom{y,z}{n-1}\}\cup  \{\tau_1(m)\mid m\in\binom{y,z}{n}\}\end{equation} form a basis for $E_1$. 
\item\label{main-lemma-a-ii}The elements \begin{equation}\label{mar-3-14.1''}\textstyle\{\tau_2(m)\mid m\in\binom{y,z}{n-1}\}\cup  \{\tau_2(m^*)\mid m\in\binom{y,z}{n}\}\end{equation} form a basis for $E_2$.
\item\label{main-lemma-a-iii} The module $E_0$ is equal to $G_0={\mathfrak R}$.
\item\label{main-lemma-a-iv} The module $E_3$ is the free $\mathfrak R$-module $\pmb \delta^2G_3$.
\end{enumerate}
\item\label{main-lemma-b} For each $i$, with $0\le i\le 3$, $(E_i)_{\pmb \delta}=(G_i)_{\pmb \delta}$.
\item\label{main-lemma-c} Each ${\mathfrak R}$-module $\tau_i: B_i\to E_i$ is an isomorphism.
\item\label{main-lemma-d} The sequence of homomorphisms $$(\mathbb E,e):\quad   \xymatrix{0\ar[r]& E_3\ar[r]^{e_3}&E_2\ar[r]^{e_2}&E_1\ar[r]^{e_1}&E_0}
$$ is a complex  of free  ${\mathfrak R}$-modules.
\item\label{main-lemma-e} The sequence of homomorphisms $(\mathbb B,b)$ of Definition~{\rm\ref{main-def}.\ref{main-def-c}} is a complex of free  ${\mathfrak R}$-modules.
\item\label{main-lemma-f}  The ${\mathfrak R}$-module homomorphisms $\tau_i: B_i\to E_i$  give 
an isomorphism of complexes $(\mathbb B,b)\simeq (\mathbb E,e):$
$$\xymatrix{0\ar[r]&{\mathfrak R}\ar[r]^{b_3}\ar[d]^{\tau_3}&B_2\ar[r]^{b_2}\ar[d]^{\tau_2}&B_1\ar[r]^{b_1}\ar[d]^{\tau_1}&{\mathfrak R}\ar[d]^{=}\vspace{5pt}\\
0\ar[r]&E_3\ar[r]^{e_3}&E_2\ar[r]^{e_2}&E_1\ar[r]^{e_1}&{\mathfrak R}}.$$ Furthermore, the localizations $(E,e)_{\pmb\delta}$ and $(G,g)_{\pmb \delta}$ are equal.

\item\label{main-lemma-g} All of the assertions of Theorem~{\rm\ref{old-mvd}} hold for the explicitly constructed complex $(\mathbb B,b)$ in place of $(\mathbb G,g)$.

\end{enumerate}
\end{lemma}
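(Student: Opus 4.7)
The central idea is to exploit the commutative diagram $\tau: (\mathbb B, b) \to (\mathbb G, g)$ of Lemma~\ref{comm-diag} in concert with the identities $p\circ q = q\circ p = \pmb\delta\cdot\mathrm{id}$ from Observation~\ref{obvious}.\ref{obvious-b},\ref{obvious-c} in order to transfer the resolution properties of $(\mathbb G, g)$ to the explicit complex $(\mathbb B, b)$. Because $\pmb\delta$ is a nonzero element of the polynomial ring $\mathfrak R = \mathbb Z[x,y,z,\{t_m\}]$, it is a non-zero divisor, so each free $\mathfrak R$-module $B_i$ injects into its localization $(B_i)_{\pmb\delta}$. This reduces the injectivity of $\tau_i$ (and hence parts (a)(i)-(ii) and (c)) to the injectivity of $(\tau_i)_{\pmb\delta}$, and in fact I would aim to prove at one stroke that $(\tau_i)_{\pmb\delta}: (B_i)_{\pmb\delta} \to (G_i)_{\pmb\delta}$ is an isomorphism for $i=1,2$, which simultaneously yields (b).

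For the surjectivity after localization, I would construct explicit preimages. Given $r\in (G_1)_{\pmb\delta}$, decompose it along $\operatorname{Sym}_n^{\mathbb Z} U = \operatorname{Sym}_n^{\mathbb Z} U_0 \oplus x\cdot\operatorname{Sym}_{n-1}^{\mathbb Z} U$ as $r = r_0 + xr_1$, set $\mu = \frac{1}{\pmb\delta}r_0 \in \mathfrak R_{\pmb\delta}\otimes \operatorname{Sym}_n^{\mathbb Z} U_0$, and define $\nu = \frac{1}{\pmb\delta}p(r_1) + \mu({\widetilde{\Phi}}) \in \mathfrak R_{\pmb\delta}\otimes D_{n-1}^{\mathbb Z}(U^*)$. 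The key verification is that $\nu$ actually lies in the submodule $\mathfrak R_{\pmb\delta}\otimes D_{n-1}^{\mathbb Z}(U_0^*)$, i.e., that $x(\nu)=0$: using $x({\widetilde{\Phi}})=\Phi$, one computes $x(\nu) = \frac{1}{\pmb\delta}(xr_1)(\Phi) + \mu(\Phi) = \frac{1}{\pmb\delta}(r_0+xr_1)(\Phi) = \frac{1}{\pmb\delta}r(\Phi) = 0$, where the last equality is exactly the defining condition for $r$ to lie in $G_1$. A routine computation then confirms $\tau_1(\nu+\mu) = r$. An analogous construction handles $i=2$, using the description of $G_2$ as the kernel of $\underline v_2$ in Data~\ref{Mar-5-2014} together with the Koszul structure of $\tau_2$. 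Since both $(B_i)_{\pmb\delta}$ and $(G_i)_{\pmb\delta}$ are free of rank $2n+1$ over the Noetherian ring $\mathfrak R_{\pmb\delta}$, a surjection between them is automatically an isomorphism, so $(\tau_i)_{\pmb\delta}$ is an isomorphism. Part (b) follows, and by the injection $B_i \hookrightarrow (B_i)_{\pmb\delta}$ so does injectivity of $\tau_i$, giving (c) and the basis claims (a)(i)-(ii). Part (a)(iii) is trivial, and (a)(iv) follows directly from $\tau_3(1) = \pmb\delta^2\, x\wedge y\wedge z$ and $G_3 = \mathfrak R\cdot (x\wedge y\wedge z)$.

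The remaining parts are then formal. For (d), the commutativity of Lemma~\ref{comm-diag} gives $g_i(E_i) = g_i(\tau_i(B_i)) = \tau_{i-1}(b_i(B_i)) \subset E_{i-1}$, so the restriction $e_i$ is well-defined, and $e_{i-1}\circ e_i = g_{i-1}\circ g_i|_{E_i} = 0$ since $(\mathbb G, g)$ is a complex. Part (f) then combines (c) with Lemma~\ref{comm-diag} to produce the isomorphism of complexes, and $(\mathbb E,e)_{\pmb\delta} = (\mathbb G,g)_{\pmb\delta}$ is precisely (b). Part (e) is the pullback of (d) along the isomorphism $\tau$. Finally, for (g), every conclusion of Theorem~\ref{old-mvd} either concerns $(\mathbb G, g)_{\pmb\delta}$ directly or a further specialization along a ring homomorphism that sends $\pmb\delta$ to a unit; since $(\mathbb B, b)_{\pmb\delta} \cong (\mathbb E, e)_{\pmb\delta} = (\mathbb G, g)_{\pmb\delta}$, each such conclusion transfers verbatim to $(\mathbb B, b)$. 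The hardest step I anticipate is executing the section construction for $i=2$ cleanly, where the candidate preimage must be shown, via the membership condition in $\ker \underline v_2$ and the defining property of ${\widetilde{\Phi}}$, to land in the correct $U_0$-summands of $\operatorname{Sym}_{n-1}^{\mathbb Z} U_0$ and $D_n^{\mathbb Z}(U_0^*)$.
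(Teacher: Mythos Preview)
Your approach is correct and is a genuinely different route from the paper's. For $i=1$ the paper performs an explicit change of basis: starting from the fact that $q$ is an isomorphism after inverting $\pmb\delta$, it writes down a concrete basis of $\mathfrak R_{\pmb\delta}\otimes\operatorname{Sym}_n U$ containing the set (\ref{mar-3-14.1'}), and then checks that $\underline v_1$ kills exactly this subset and carries the complementary subset bijectively onto a basis of $\mathfrak R_{\pmb\delta}\otimes D_{n-2}(U^*)$. Your construction of an explicit section $r\mapsto(\nu,\mu)$ is cleaner and more conceptual for $i=1$: the verification $x(\nu)=0$ is precisely the membership condition $r(\Phi)=0$, and the computation $\tau_1(\nu+\mu)=r$ is immediate from $q\circ p=\pmb\delta\cdot\mathrm{id}$. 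One small correction: at this point in the argument only \emph{projectivity} of $(G_i)_{\pmb\delta}$ is known from Theorem~\ref{old-mvd} (freeness is in fact a consequence of this very lemma), but your rank argument still goes through, since a surjection from a free module of rank $2n+1$ onto a projective module of the same rank over the domain $\mathfrak R_{\pmb\delta}$ splits with projective kernel of rank zero, hence zero.

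For $i=2$ your sketch is thinner, and here the two approaches converge more than you might hope. Any section construction must begin by decomposing an element of $(G_2)_{\pmb\delta}\subset\mathfrak R_{\pmb\delta}\otimes L_{1,n}$, and the only natural way to do this is via the direct-sum decomposition of $L_{1,n}$ into $\kappa(x\wedge y\otimes-)$, $\kappa(x\wedge z\otimes-)$, and $\kappa(y\wedge z\otimes-)$ pieces (the paper's (\ref{decomp1})). Once you read off $\mu$ from the $y\wedge z$ component, recovering $\nu$ and checking that it lands in $D_n(U_0^*)$ amounts to the same bookkeeping with the basis (\ref{K-basis}) of $K_{1,n-2}$ that the paper carries out. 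So your method gives a real simplification for $i=1$, while for $i=2$ the work is comparable; the paper's uniform basis-manipulation treatment has the advantage that both cases are handled by the same template. Your treatment of (d)--(g) matches the paper's.
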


\begin{proof}  
Assertions (\ref{main-lemma-a-iii}) and (\ref{main-lemma-a-iv}) are obvious. Assertion (\ref{main-lemma-b}) is also obvious when $i=0$ or $i=3$.

\medskip\noindent{\bf (\ref{main-lemma-a-i}) and (\ref{main-lemma-b}) for $i=1$.}   We prove 
(\ref{main-lemma-a-i}) 
and (\ref{main-lemma-b}) for $i=1$ by showing that $(G_1)_{\pmb \delta}$ is a free ${\mathfrak R}_{\pmb \delta}$-module with basis  (\ref{mar-3-14.1'}). The fact that $q:{\mathfrak R}_{\pmb \delta}\otimes_{{\mathbb Z}}D_{n-1}^{{\mathbb Z}}(U^*)\to {\mathfrak R}_{\pmb \delta}\otimes_{{\mathbb Z}}{\operatorname{Sym}}_{n-1}^{{\mathbb Z}}U$ is an ${\mathfrak R}_{\pmb\delta}$-module isomorphism guarantees that 
\begin{equation}\label{4.3.1}\textstyle \text{the elements $\{ q(m^*)\mid m\in \binom{x,y,z}{n-1}\}$ form a basis for ${\mathfrak R}_{\pmb \delta}\otimes_{{\mathbb Z}}{\operatorname{Sym}}_{n-1}^{\mathbb Z}U$.}\end{equation} It follows that   the elements 
\begin{equation}\label{basis1}\textstyle \{xq(m^*)\mid m\in \binom{x,y,z}{n-1}\}\cup \{m\mid m\in \binom{y,z}{n}\}\end{equation}
form a basis for ${\mathfrak R}_{\pmb \delta}\otimes_{{\mathbb Z}}{\operatorname{Sym}}_{n}U$. Thus,
\begin{equation}\label{basis2}\textstyle \{ xq(m^*)\mid m\in \binom{y,z}{n-1}\}\cup
\{ \pmb\delta m-xq(m({\widetilde{\Phi}}))\mid m\in \binom{y,z}{n}\}\cup
\{ xq((xm)^*)\mid m\in \binom{x,y,z}{n-2}\}
\end{equation} is a basis for 
${\mathfrak R}_{\pmb \delta}\otimes_{{\mathbb Z}}{\operatorname{Sym}}_{n}U$. 
(This step is legitimate, but a little complicated. We took the basis (\ref{basis1}); 
multiplied each element in the right-most set by a unit and added an element of the sub-module spanned by the left-most set, then we  
partitioned the left-most set into two subsets.)
At any rate, (\ref{basis2}) is a basis for ${\mathfrak R}_{\pmb \delta}\otimes_{{\mathbb Z}}{\operatorname{Sym}}_{n}U$ and (\ref{basis2}) 
 is the union of (\ref{mar-3-14.1'}) and \begin{equation}\label{mar-16}\textstyle\{ xq((xm)^*)\mid m\in \binom{x,y,z}{n-2}\}.\end{equation} Observe that $\underbar v_1$ gives a bijection between the set (\ref{mar-16})  and  the basis 
$$\textstyle \{\pmb \delta m^*\mid m\in\binom{x,y,z}{n-2}\}\text{ for } 
{\mathfrak R}_{\pmb \delta}\otimes_{{\mathbb Z}} D_{n-2}(U^*)={\mathfrak R}_{\pmb \delta}\otimes_{{\mathbb Z}} K_{0,n-2},$$ 
since $$\underbar v_1(xq((xm)^*))=[xq((xm)^*)](\Phi)=x([q((xm)^*)](\Phi))=\pmb\delta\cdot x((xm)^*)=\pmb \delta m^*.$$
Thus, the map $\underbar v_1:{\mathfrak R}_{\pmb \delta}\otimes_{{\mathbb Z}}{\operatorname{Sym}}_{n}U\to {\mathfrak R}_{\pmb \delta}\otimes_{{\mathbb Z}} D_{n-2}(U^*)$ takes the basis 
$$\textstyle {\rm(\ref{mar-3-14.1'})} \cup  {\rm(\ref{mar-16})}$$  for 
${\mathfrak R}_{\pmb \delta}\otimes_{{\mathbb Z}}{\operatorname{Sym}}_{n}U$, sends each element of {\rm(\ref{mar-3-14.1'})} to zero and carries 
{\rm(\ref{mar-16})} bijectively onto a basis for 
${\mathfrak R}_{\pmb \delta}\otimes_{{\mathbb Z}} D_{n-2}(U^*)$. We conclude that $(\ker \underbar v_1)_{\pmb \delta}$ is the free ${\mathfrak R}_{\pmb \delta}$ module with basis {\rm(\ref{mar-3-14.1'})} and this establishes (\ref{main-lemma-a-i}) 
and (\ref{main-lemma-b}) for $i=1$.

\medskip\noindent {\bf (\ref{main-lemma-a-ii}) and (\ref{main-lemma-b}) for $i=2$.} 
 We prove  (\ref{main-lemma-a-ii}) and (\ref{main-lemma-b}) for $i=2$ by showing that $(G_2)_{\pmb \delta}$ is a free ${\mathfrak R}_{\pmb \delta}$-module with basis  (\ref{mar-3-14.1''}). Our argument is similar to the proof of (\ref{main-lemma-a-i}) 
and (\ref{main-lemma-b}) for $i=1$
 in that we 
prove that (\ref{mar-3-14.1''}) together with
\begin{equation}\label{something-else}\begin{array}{l}
\textstyle \phantom{{}\cup{}}\{\kappa(x\wedge y\otimes q((xm)^*))\mid m\in \binom{x,y,z}{n-2}\}\cup\{\kappa(x\wedge z\otimes q((xm)^*))\mid m\in \binom{x,y,z}{n-2}\}\vspace{5pt}\\{}\cup \{\kappa(x\wedge y\otimes q((ym)^*))\mid m\in \binom{y,z}{n-2}\}
\end{array}\end{equation} 
forms a basis 
for ${\mathfrak R}_{\pmb\delta}\otimes_{{\mathbb Z}}L_{1,n}$ with the property that $\underbar v_2$ carries (\ref{something-else}) bijectively onto a basis for ${\mathfrak R}_{\pmb\delta}\otimes_{{\mathbb Z}}K_{1,n-2}$.
 The $\mathbb Z$-modules $L_{1,n}$ and $K_{1,n-2}$ are known to be free and have bases 
\begin{equation}\label{L-basis}\begin{array}{l}\textstyle
\phantom{{}\cup{}}\{\kappa(x\wedge y\otimes m)\mid m\in \binom{x,y,z}{n-1}\}\cup 
\{\kappa(x\wedge z\otimes m)\mid m\in \binom{x,y,z}{n-1}\}\vspace{5pt}\\{}\cup
\{\kappa(y\wedge z\otimes m)\mid m\in \binom{y,z}{n-1}\}\end{array}\end{equation}
and
\begin{equation}\label{K-basis}\begin{array}{l}\textstyle
\phantom{{}\cup{}}\{\eta(x\wedge y\otimes (xm)^*)\mid m\in \binom{x,y,z}{n-2}\}\cup 
\{\eta(x\wedge z\otimes (xm)^*)\mid m\in \binom{x,y,z}{n-2}\}\vspace{5pt}\\{}\cup
\{\eta(x\wedge y\otimes (ym)^*)\mid m\in \binom{y,z}{n-2}\},\end{array}\end{equation}
respectively, see, for example, \cite[(5.4) and (5.5)]{EKK} or \cite[Examples~2.1.3.h and 2.1.17.h]{W}.
The basis (\ref{L-basis}) for $L_{1,n}$ leads to the decomposition of ${\mathfrak R}_{\pmb\delta}\otimes_{{\mathbb Z}}L_{1,n}$ into the following direct sum of free ${\mathfrak R}_{\pmb\delta}$-modules:
\begin{equation}\label{decomp1}{\mathfrak R}_{\pmb\delta}\otimes_{{\mathbb Z}}L_{1,n}=\begin{cases}{\mathfrak R}_{\pmb\delta}\otimes_{{\mathbb Z}}\kappa(x\wedge y\otimes {\operatorname{Sym}}_{n-1}^{{\mathbb Z}}U)\oplus
{\mathfrak R}_{\pmb\delta}\otimes_{{\mathbb Z}}\kappa(x\wedge z\otimes {\operatorname{Sym}}_{n-1}^{{\mathbb Z}}U)\vspace{5pt}\\{} \oplus \bigoplus\limits_{m\in \binom{y,z}{n-1}}
{\mathfrak R}_{\pmb\delta} \kappa(y\wedge z\otimes m)\end{cases}\end{equation}
For each $m\in \binom{y,z}{n-1}$, recall the element 
$$\tau_2 (m)=\pmb\delta\kappa(x\wedge z\otimes q((ym)({\widetilde{\Phi}})))-\pmb\delta\kappa (x \wedge y \otimes q((zm)({\widetilde{\Phi}})))-\pmb \delta^2 \kappa (y\wedge z\otimes m)$$ of $E_2$.
 Notice that $\tau_2(m)$ is equal to the sum of a unit of ${\mathfrak R}_{\pmb\delta}$ times the basis vector $\kappa(y\wedge z\otimes m)$ from the third summand in (\ref{decomp1}) plus an element from the first two summands. It follows that $\{\tau_2(m)\mid m\in \binom{y,z}{n-1}\}$ generates a free submodule of ${\mathfrak R}_{\pmb\delta}\otimes_{{\mathbb Z}}L_{1,n}$ and 
\begin{equation}\label{decomp2}{\mathfrak R}_{\pmb\delta}\otimes_{{\mathbb Z}}L_{1,n}=\begin{cases}{\mathfrak R}_{\pmb\delta}\otimes_{{\mathbb Z}}\kappa(x\wedge y\otimes {\operatorname{Sym}}_{n-1}^{{\mathbb Z}}U)\oplus
{\mathfrak R}_{\pmb\delta}\otimes_{{\mathbb Z}}\kappa(x\wedge z\otimes {\operatorname{Sym}}_{n-1}^{{\mathbb Z}}U)\vspace{5pt}\\{} \oplus \bigoplus\limits_{m\in \binom{y,z}{n-1}}
{\mathfrak R}_{\pmb\delta} \tau_2(m).\end{cases}\end{equation}
Use (\ref{4.3.1}) to see that 
$$\textstyle \{\kappa(x\wedge y\otimes q(m^*))\mid m\in\binom{x,y,z}{n-1}\}\cup \{\kappa(x\wedge z\otimes q(m^*))\mid m\in\binom{x,y,z}{n-1}\}\cup\{\tau_2(m)\mid m\in \binom{y,z}{n-1}\}$$ is a basis for ${\mathfrak R}_{\pmb\delta}\otimes_{{\mathbb Z}}L_{1,n}$. Keep in mind that $\binom{x,y,z}{n-1}$ is the disjoint union $x\binom{x,y,z}{n-2} \cup \binom{y,z}{n-1}$ and that $\binom{y,z}{n-1}$ is the disjoint union $y\binom{y,z}{n-1}\cup\{z^{n-1}\}$ as well as the disjoint union $z\binom{y,z}{n-1}\cup\{y^{n-1}\}$. Thus, 
$$\begin{array}{l}\textstyle \phantom{{}\cup{}}\{\kappa(x\wedge y\otimes q((xm)^*))\mid m\in\binom{x,y,z}{n-2}\}\cup \{\kappa(x\wedge z\otimes q((xm)^*))\mid m\in\binom{x,y,z}{n-2}\}\vspace{5pt}\\{}
\cup\{ \kappa(x\wedge y\otimes q((ym)^*))\mid m\in\binom{y,z}{n-2}\}
\cup\{ \kappa(x\wedge z\otimes q((zm)^*))\mid m\in\binom{y,z}{n-2}\}\vspace{5pt}\\{}\cup \{\kappa(x\wedge y\otimes q((z^{n-1})^*)), \kappa(x\wedge z\otimes q((y^{n-1})^*))\}
\cup\{\tau_2(m)\mid m\in \binom{y,z}{n-1}\}\end{array}$$ is a basis for ${\mathfrak R}_{\pmb\delta}\otimes_{{\mathbb Z}}L_{1,n}$. We subtract each basis element in the set $\{ \kappa(x\wedge y\otimes q((ym)^*))\mid m\in\binom{y,z}{n-2}\}$ from the corresponding basis element in the set 
$\{ \kappa(x\wedge z\otimes q((ym)^*))\mid m\in\binom{y,z}{n-2}\}$ to see that

\begin{equation}\label{this-basis}\begin{array}{l}\textstyle \phantom{{}\cup{}}\{\kappa(x\wedge y\otimes q((xm)^*))\mid m\in\binom{x,y,z}{n-2}\}\cup \{\kappa(x\wedge z\otimes q((xm)^*))\mid m\in\binom{x,y,z}{n-2}\}\vspace{5pt}\\{}
\cup\{ \kappa(x\wedge y\otimes q((ym)^*))\mid m\in\binom{y,z}{n-2}\}\vspace{5pt}\\{}
\cup\{ \kappa(x\wedge z\otimes q((zm)^*))
-\kappa(x\wedge y\otimes q((ym)^*))
\mid m\in\binom{y,z}{n-2}\}\vspace{5pt}\\{}\cup \{\kappa(x\wedge y\otimes q((z^{n-1})^*)), \kappa(x\wedge z\otimes q((y^{n-1})^*))\}
\cup\{\tau_2(m)\mid m\in \binom{y,z}{n-1}\}\end{array}\end{equation} is a basis for ${\mathfrak R}_{\pmb\delta}\otimes_{{\mathbb Z}}L_{1,n}$. 
The union of the first three subset of (\ref{this-basis}), namely
$$\begin{array}{l}\textstyle \phantom{{}\cup{}}\{\kappa(x\wedge y\otimes q((xm)^*))\mid m\in\binom{x,y,z}{n-2}\}\cup \{\kappa(x\wedge z\otimes q((xm)^*))\mid m\in\binom{x,y,z}{n-2}\}
\vspace{5pt}\\{}\cup\{ \kappa(x\wedge y\otimes q((ym)^*))\mid m\in\binom{y,z}{n-2}\}, \end{array}$$
is the set we have called (\ref{something-else}). We reparameterize sets four  and five of (\ref{this-basis}), namely
\begin{equation}\label{four-five}\begin{array}{l}\textstyle \phantom{{}\cup{}}\{ \kappa(x\wedge z\otimes q((zm)^*))
-\kappa(x\wedge y\otimes q((ym)^*))
\mid m\in\binom{y,z}{n-2}\}\vspace{5pt}\\{}\cup \{\kappa(x\wedge y\otimes q((z^{n-1})^*)), \kappa(x\wedge z\otimes q((y^{n-1})^*))\}.\end{array}\end{equation} If $m\in\binom{y,z}{n-2}$, then let $m_1$ be the monomial $yzm$ in $\binom{y,z}{n}$.
Observe that 
$$\begin{array}{lll}\tau_2(m_1^*)&=&\pmb\delta\kappa(x\wedge z\otimes q(y(m_1^*)))
-\pmb\delta\kappa(x\wedge y\otimes q(z(m_1^*)))\vspace{5pt}\\&=&\pmb\delta\kappa(x\wedge z\otimes q((zm)^*)))
-\pmb\delta\kappa(x\wedge y\otimes q((ym)^*))).\end{array}$$
Observe also that 
$$\tau_2(m_1^*)=\pmb\delta\kappa(x\wedge z\otimes q(y(m_1^*)))
-\pmb\delta\kappa(x\wedge y\otimes q(z(m_1^*)))=\begin{cases} \phantom{-}\pmb\delta\kappa(x\wedge z\otimes q((y^{n-1})^*)&\text{if $m_1=y^n$} \vspace{5pt}\\-\pmb\delta(\kappa(x\wedge y\otimes q((z^{n-1})^*)&\text{if $m_1=z^n$}. \end{cases}$$
Thus, $\{ \tau_2(m_1^*)
\mid m_1\in \binom{y,z}{n}\}$ is a basis for the free ${\mathfrak R}_{\pmb \delta}$-module spanned by (\ref{four-five}) and  
 $$\textstyle \text{the union of (\ref{something-else}), $\{ \tau_2(m_1^*)
\mid m_1\in \binom{y,z}{n}\}$, and $\{\tau_2(m)\mid m\in \binom{y,z}{n-1}\}$}$$ is a basis for ${\mathfrak R}_{\pmb\delta}\otimes_{{\mathbb Z}}L_{1,n}$. Furthermore,  the set  (\ref{mar-3-14.1''}) is the union of  
$$\textstyle \{ \tau_2(m_1^*)
\mid m_1\in \binom{y,z}{n}\}\cup\{\tau_2(m)\mid m\in \binom{y,z}{n-1}\}.$$ Therefore, we have established that the union of
 (\ref{mar-3-14.1''}) and (\ref{something-else}) is a basis for the free-module ${\mathfrak R}_{\pmb\delta}\otimes_{{\mathbb Z}}L_{1,n}$. We saw in Observation \ref{really} that each element in (\ref{mar-3-14.1''}) is in the kernel of $\underbar v_2$.
A straightforward calculation  shows that $\underbar v_2$ carries (\ref{something-else}) bijectively onto the  unit $\pmb \delta$ times the basis (\ref{K-basis})  of ${\mathfrak R}_{\pmb\delta}\otimes_{{\mathbb Z}}K_{1,n-2}$:
\begin{equation}\label{something-else-2}\begin{array}{llll}
\underbar v_2(\kappa(x\wedge y\otimes q((xm)^*)))&=&\pmb\delta\eta(x\wedge y\otimes (xm)^*) 
&\text{for }m\in \binom{x,y,z}{n-2}\vspace{5pt}\\
\underbar v_2(\kappa(x\wedge z\otimes q((xm)^*)))&=&\pmb\delta\eta(x\wedge z\otimes (xm)^*) 
&\text{for }m\in \binom{x,y,z}{n-2}\vspace{5pt}\\
\underbar v_2(\kappa(x\wedge y\otimes q((ym)^*))&=&\pmb\delta\eta(x\wedge y\otimes (ym)^*)  
&\text{for }m\in \binom{y,z}{n-2}.
\end{array}\end{equation} 
In particular, for example, the top equation in (\ref{something-else-2}) is:
$$\begin{array}{lll}\phantom{=}&\underbar v_2\left(\kappa(x\wedge y\otimes q((xm)^*))\vphantom{{\widetilde{\Phi}}}\right)\vspace{5pt}\\
=&\underbar v_2\left(y\otimes xq((xm)^*)-x\otimes yq((xm)^*) \vphantom{{\widetilde{\Phi}}}\right) &\text{the definition of $\kappa$}\vspace{5pt}\\
=&y\otimes [xq((xm)^*)](\Phi)-x\otimes [yq((xm)^*) )](\Phi)&\text{the definition of $\underbar v_2$}\vspace{5pt}\\
=&y\otimes x\left([q((xm)^*)](\Phi)\vphantom{{\widetilde{\Phi}}}\right)-x\otimes y\left([q((xm)^*) )](\Phi)\vphantom{{\widetilde{\Phi}}}\right)&\text{(\ref{mod-act})}\vspace{5pt}\\
=&\pmb\delta \left(y\otimes x((xm)^*)-x\otimes y((xm)^*) \vphantom{{\widetilde{\Phi}}}\right)&\text{Observation \ref{obvious}, items (\ref{obvious-a}) and (\ref{obvious-c})}\vspace{5pt}\\
=&\pmb\delta \eta(x\wedge y\otimes (xm)^*)
&\text{the definition of $\eta$}.\vspace{5pt}\\
\end{array}$$
We conclude that the kernel of $\underbar v_2:{\mathfrak R}_{\pmb\delta}\otimes_{{\mathbb Z}}L_{1,n}\to{\mathfrak R}_{\pmb\delta}\otimes_{{\mathbb Z}}K_{1,n-2}$ is the free ${\mathfrak R}_{\pmb\delta}$-module with basis (\ref{mar-3-14.1''}) and this 
completes the proof of  (\ref{main-lemma-a-ii}) and (\ref{main-lemma-b}) for $i=2$.

\medskip\noindent{\bf(\ref{main-lemma-c}).} We proved in (\ref{main-lemma-a}) that $\tau_i$ carries a basis for the free ${\mathfrak R}$-module $B_i$ bijectively onto a basis for the free ${\mathfrak R}$-module $E_i$.

\medskip\noindent{\bf (\ref{main-lemma-d}).} We must verify that $e_i(E_i)\subseteq E_{i-1}$ and this follows from (\ref{main-lemma-c}) and Lemma~\ref{comm-diag}:
$$e_i(E_i)=g_i(E_i)=g_i(\operatorname{im} \tau_i)=\operatorname{im}(g_i\circ \tau_i)=\operatorname{im}(\tau_{i-1}\circ b_i)=\tau_{i-1}\operatorname{im}(b_i)\subseteq \tau_{i-1}(B_{i-1})=E_{i-1}.$$

\medskip\noindent{\bf (\ref{main-lemma-e}).} We must verify that $b_i\circ b_{i+1}=0$. One may apply the fact that  $\tau_{i-1}$ is injective, together with  Lemma~\ref{comm-diag}, to the complex $(\mathbb G,g)$, in order  to see that  $$\tau_{i-1}\circ b_i\circ b_{i+1}=g_i\circ g_{i+1}\circ \tau_{i+1}=0;$$ hence, $b_i\circ b_{i+1}=0$.

\medskip\noindent{\bf (\ref{main-lemma-f}).} We know from (\ref{main-lemma-d}) and (\ref{main-lemma-e})
that $(\mathbb E,e)$ and $(\mathbb B,b)$ are complexes; from Lemma \ref{comm-diag} that $\tau:\mathbb B\to\mathbb E$ is a map of complexes; from (\ref{main-lemma-c}) that $\tau:\mathbb B\to\mathbb E$ is an isomorphism  of complexes; and from (\ref{main-lemma-b}) that $\mathbb E_{\pmb \delta}=\mathbb G_{\pmb \delta}$.

\medskip\noindent{\bf (\ref{main-lemma-g}).} We see in (\ref{main-lemma-f}) that $(\mathbb B,b)$ to isomorphic to a free sub-complex of $(\mathbb G,g)$ and that $(\mathbb B,b)_{\pmb \delta}$ and $(\mathbb G,g)_{\pmb \delta}$ are isomorphic complexes.
\end{proof}

\section{The matrix description of $\mathbb B$.}\label{mat-desc}

\setcounter{equation}{0}

Start with Data \ref{Opening-Data}. Pick a basis $x,y,z$ for $U$ and use the basis $\binom{x,y,z}{2n-2}$ for ${\operatorname{Sym}}_{2n-2}^{\mathbb Z}U$. It follows that ${\mathfrak R}$ is the bi-graded polynomial ring
$$\textstyle\mathbb Z[\Psi(x),\Psi(y),\Psi(z),\{\Phi(m)\mid  \binom{x,y,z}{2n-2}\} ].$$ 
The symbols $\Psi(x)$, $\Psi(y)$, $\Psi(z)$, and $\Phi(m)$ are all fairly cumbersome. In order to avoid these symbols, we write ${\mathrm{R}}$ in place of ${\mathfrak R}$ when we emphasize that we have chosen the monomial bases 
for $U$ and ${\operatorname{Sym}}_{2n-2}^{\mathbb Z}U$. Furthermore, 
we write 
\begin{equation}\label{in-place}\begin{array}{l}\text{$x$, $y$, $z$, and $t_{m}$ in ${\mathrm{R}}$ in place of $\Psi(x)$, $\Psi(y)$, $\Psi(z)$, and $\Phi(m)$ in ${\mathfrak R}$, respectively,}\vspace{5pt}\\ \text{for $m\in \binom{x,y,z}{2n-2}$.}\end{array}\end{equation} 
At any rate, ${\mathrm{R}}$ is the bi-graded polynomial ring
\begin{equation}\label{Ri}\textstyle {\mathrm{R}}=\mathbb Z[x,y,z,\{t_m|m\in \binom{x,y,z}{2n-2}\}],\end{equation}
where $x$, $y$, and $z$ have degree $(1,0)$ and each variable $t_{m}$ has degree $(0,1)$. The equation 
$$\Phi=\sum\limits_{m\in \binom{x,y,z}{2n-2}} t_m\otimes m^* \in {\mathrm{R}}\otimes_{\mathbb Z}D_{2n-2}^{{\mathbb Z}}(U^*)$$
is explained in Remark~\ref{Phi}. It follows immediately that the element ${\widetilde{\Phi}}$ of Data \ref{data-3} is given by \begin{equation}\label{superPhi}{\widetilde{\Phi}}=\sum\limits_{m\in \binom{x,y,z}{2n-2}} t_m\otimes (xm)^* \in {\mathrm{R}}\otimes_{\mathbb Z}D_{2n-1}^{{\mathbb Z}}(U^*).\end{equation}
In Proposition~\ref{explc}, we describe the complex $(\mathbb B,b)$ 
\begin{equation}\label{TBD} 0\to {\mathrm{R}} \xrightarrow{\ b_3\ } \begin{matrix} {\mathrm{R}}\otimes _{{\mathbb Z}} {\operatorname{Sym}}_{n-1}^{{\mathbb Z}}U_0\vspace{5pt}\\\oplus\vspace{5pt}\\
{\mathrm{R}}\otimes_{\mathbb Z} D_n^{{\mathbb Z}} (U_0^*)\end{matrix} \xrightarrow{\ b_2\ } 
\begin{matrix} {\mathrm{R}}\otimes _{{\mathbb Z}} D_{n-1}^{{\mathbb Z}}(U_0^*)\vspace{5pt}\\\oplus\vspace{5pt}\\
{\mathrm{R}}\otimes_{\mathbb Z} {\operatorname{Sym}}_n^{{\mathbb Z}} U_0\end{matrix} \xrightarrow{\ b_1\ } {\mathrm{R}}\end{equation}
in terms of the elements of ${\mathrm{R}}$ explicitly. As a bi-homogeneous complex $\mathbb B$ has the form
$$
0 \to {\textstyle {\mathrm{R}}\left(-2n-1,-3\binom{n+1}2+1\right)}\longrightarrow 
\begin{matrix}
{\mathrm{R}}\left(-n-1,-2\binom{n+1}2\right)^{n}\vspace{5pt}\\
\oplus\vspace{5pt}\\{\mathrm{R}}\left(-n-1,-2\binom{n+1}2+1\right)^{n+1}
\end{matrix}
\longrightarrow \begin{matrix}{\mathrm{R}}\left(-n,-\binom{n+1}2+1\right)^n\vspace{5pt}\\
\oplus\vspace{5pt}\\{\mathrm{R}}\left(-n,-\binom{n+1}2\right)^{n+1}\end{matrix} \longrightarrow {\mathrm{R}}.$$
There are two motivations for this project. First of all, we have promised that $(\mathbb B,b)$ is built in an explicit and polynomial manner from the coefficients of the  Macaulay inverse system $\Phi$; we are thereby compelled to leave no doubt that we have given an explicit description. Secondly, we recognize that some readers will prefer the description of Definition~\ref{main-def}; whereas others will prefer the description of Proposition~\ref{explc}.

\begin{remark}\label{10.1}Let $T$ be the matrix $(t_{m_1m_2})$ where $m_1$ and $m_2$ roam over  $\binom{x,y,z}{n-1}$ in the same order. Let $\pmb \delta$ be the determinant of $T$ and $Q$ be the classical adjoint of $T$. It makes sense to refer to the entries of $Q$ as $Q_{m_1,m_2}$ for $m_1$ and $m_2$ in $\binom{x,y,z}{n-1}$.

\begin{enumerate}[(a)] \item  Notice that $T$ is the matrix for $p$ from Data \ref{manuf-data}.\ref{8.19.d} with respect to the bases $\binom{x,y,z}{n-1}$ for ${\operatorname{Sym}}_{n-1}^{\mathbb Z}U$ and $\{m^*\mid m\in \binom{x,y,z}{n-1}\}$ for $D_{n-1}^{{\mathbb Z}}(U^*)$, because
$$\textstyle p({m_2})=\sum\limits_{m_1\in\binom{x,y,z}{n-1}}t_{m_1m_2}\otimes m_1^*\in {\mathrm{R}}\otimes_{{\mathbb Z}}D_{n-1}^{{\mathbb Z}}(U^*), \quad \text{for $m_2\in \binom{x,y,z}{n-1}$}. $$ \item The element $\pmb \delta\in {\mathrm{R}}$ of the present remark is the image  of $\pmb \delta\in {\mathfrak R}$ from Data \ref{manuf-data}.\ref{8.19.b} 
under the convention (\ref{in-place}).
 \item\label{10.1-c} Notice that $Q$ is the matrix for the map $q$ of Data \ref{manuf-data}.\ref{8.19.c} with respect to the bases $\{m^*\mid m\in \binom{x,y,z}{n-1}\}$ for $D_{n-1}^{{\mathbb Z}}(U^*)$ and $\binom{x,y,z}{n-1}$ for ${\operatorname{Sym}}_{n-1}^{\mathbb Z}U$, in the sense that
\begin{equation}\label{not-yet}\textstyle q({m_2^*})=\sum\limits_{m_1\in\binom{x,y,z}{n-1}}Q_{m_1,m_2}\otimes m_1\in {\mathrm{R}}\otimes_{{\mathbb Z}}{\operatorname{Sym}}_{n-1}^{{\mathbb Z}}U, \quad \text{for $m_2\in \binom{x,y,z}{n-1}$}.\end{equation}
\end{enumerate}
It follows from  Observation~\ref{obvious}.\ref{obvious-f},  Data~\ref{manuf-data}.\ref{manuf-data-d}, and (\ref{8.19.c}) that
\begin{equation}\label{Q-sub}\textstyle \mathfrak Q(m_1^*\otimes m_2^*)=\mathfrak Q(m_2^*\otimes m_1^*)=[q(m_2^*)](m_1^*)=Q_{m_1,m_2} \quad \text{for $m_1$ and $m_2$ in $\binom{x,y,z}{n-1}$}.\end{equation}
The order on  $\binom{x,y,z}{n-1}$ that was used to create the matrices $T$ and $Q$ is irrelevant; see, for example, Remark~\ref{square}. The matrices $T$ and $Q$ are both symmetric and, as was seen in Observation~\ref{obvious},   
\begin{equation}\label{TQ2}\textstyle \sum\limits_{m\in\binom{x,y,z}{n-1}} t_{m'm}Q_{m,m''}=\chi(m'=m'')\pmb \delta =\sum\limits_{m\in\binom{x,y,z}{n-1}} Q_{m'',m} t_{mm'},\quad \quad \text{ for all $m'$ and $m''$ in $\binom{x,y,z}{n-1}$.}\end{equation}
\end{remark}

\begin{definition}\label{10.2} For each $m_2\in \binom{x,y,z}{n-1}$, define the element $\lambda_{m_2}$ of ${\mathrm{R}}$ by 
$\lambda_{m_2} =\sum\limits_{m_1\in \binom{x,y,z}{n-1}} m_1Q_{m_1,m_2}$.\end{definition}

\begin{remark}\label{real-10.2} If $m_2\in \binom{x,y,z}{n-1}$, then
$$\begin{array}{rcll}(\Psi\circ q)(m_2^*)&=&\Psi(\sum\limits_{m_1\in\binom{x,y,z}{n-1}}Q_{m_1,m_2}\otimes m_1)&{\rm (\ref{not-yet})}\vspace{5pt}\\
&=&\sum\limits_{m_1\in\binom{x,y,z}{n-1}}\Psi(m_1)Q_{m_1,m_2}\in {\mathfrak R}\vspace{5pt}\\
&=&\sum\limits_{m_1\in\binom{x,y,z}{n-1}}m_1Q_{m_1,m_2}\in {\mathrm{R}}&{\rm(\ref{in-place})}\vspace{5pt}\\
&=&\lambda_{m_2}\in {\mathrm{R}}.&{\rm(\ref{10.2})}\end{array}$$
 \end{remark}

\begin{observation}\label{critical2} If $m_1\in \binom{y,z}n$, then 
$$m_1({\widetilde{\Phi}})=
\sum\limits_{m_2\in \binom{x,y,z}{n-2}} t_{m_1m_2} \otimes(xm_2)^*\in {\mathrm{R}}\otimes_{\mathbb Z} D_{n-1}^{\mathbb Z}(U^*).$$
\end{observation}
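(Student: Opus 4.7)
The plan is to start from the explicit formula \eqref{superPhi} for ${\widetilde{\Phi}}$ and apply the monomial $m_1\in \binom{y,z}{n}$ term by term, using the divided‐power pairing described in \ref{conventions}.\ref{conv-f}.

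First I would write
$$m_1({\widetilde{\Phi}}) \;=\; \sum_{m\in \binom{x,y,z}{2n-2}} t_m\otimes m_1\bigl((xm)^*\bigr),$$
since the action of ${\operatorname{Sym}}_\bullet^{{\mathbb Z}}U$ on $D_\bullet^{{\mathbb Z}}(U^*)$ is ${\mathrm{R}}$-linear in the first tensor slot. The next step is to evaluate $m_1\bigl((xm)^*\bigr)$ for a general monomial $m\in\binom{x,y,z}{2n-2}$. By iterating \eqref{xim} (once for each variable in $m_1$, in any order, which is legitimate because of \eqref{mod-act}) one obtains the standard divided‐power identity
$$m_1\bigl((xm)^*\bigr)\;=\;\chi(m_1\mid xm)\,(xm/m_1)^*.$$

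Now I would use the hypothesis that $m_1$ lies in $\binom{y,z}{n}$, i.e.\ $x\nmid m_1$. This makes the divisibility condition $m_1\mid xm$ equivalent to $m_1\mid m$, and ensures that when this divisibility holds, $xm/m_1=x(m/m_1)$. Reindexing the sum by writing $m=m_1m_2$ with $m_2=m/m_1\in\binom{x,y,z}{n-2}$ collapses the sum to
$$m_1({\widetilde{\Phi}}) \;=\; \sum_{m_2\in\binom{x,y,z}{n-2}} t_{m_1m_2}\otimes (xm_2)^*,$$
which is the claimed identity.

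There is essentially no obstacle here: the entire argument is bookkeeping about divided‐power duality and the definition \eqref{superPhi}, so the only thing to be careful about is the reindexing and the fact that the $\chi$-condition reduces precisely to $m_1\mid m$ once $x\nmid m_1$ is invoked.
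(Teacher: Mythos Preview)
Your proposal is correct and follows essentially the same approach as the paper's proof: start from the explicit formula \eqref{superPhi}, apply $m_1$ term by term, use that $x\nmid m_1$ to reduce the divisibility condition, and reindex by $m_2=m/m_1$. The paper compresses the divisibility reduction into a single sentence, but the content is identical.
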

\begin{proof} The explicit form of ${\widetilde{\Phi}}$ is given in (\ref{superPhi}). The monomial $m_1$ does not involve $x$; so
$m_1[(xm)^*]$ is equal to $\chi(m_1|m)(x\frac m{m_1})^*$ and 
$$m_1({\widetilde{\Phi}})=\sum\limits_{m\in \binom{x,y,z}{2n-2}} t_m\otimes  m_1[(xm)^*]=
\sum\limits_{m\in \binom{x,y,z}{2n-2}} t_m \otimes \chi(m_1|m)(x{\textstyle\frac m{m_1}})^*.$$
Let $m_2=\chi(m_1|m)\frac m{m_1}$. Notice that as $m$ roams over $\binom{x,y,z}{2n-2}$, $m_2$ roams over $\binom{x,y,z}{n-2}$. The assertion follows. \end{proof}

\begin{proposition}\label{explc} The differentials in the complex $(\mathbb B,b)$ of Definition~{\rm\ref{main-def}.\ref{main-def-c}} are described below.
\begin{enumerate}[\rm (1)]
\item\label{explc-1} The ${\mathrm{R}}$-module homomorphism   $b_1$  is 
$$\begin{cases}
b_1(1\otimes m^*)=x\lambda_{m}, &\text{for $m\in \binom{y,z}{n-1}$, and}\vspace{5pt}\\
b_1(1\otimes m)=\pmb\delta m -x\sum\limits_{m_1\in \binom{x,y,z}{n-2}}\lambda_{xm_1} t_{m_1m},&\text{for $m\in \binom{y,z}{n}$}.\end{cases}$$
\item\label{explc-2} The ${\mathrm{R}}$-module homomorphism   $b_2$ is described below. 
\begin{enumerate}[\rm (a)]
\item\label{explc-2-a} If $m_2\in \binom{y,z}{n-1}$, then
$$b_2(1\otimes m_2) = \begin{cases}
\phantom{+}\sum\limits_{m_1\in \binom{y,z}{n-1}}\sum\limits_{M_1,M_2\in \binom{x,y,z}{n-2}}xQ_{xM_1,xM_2}
\det\bmatrix t_{m_1M_1y}&t_{m_1M_1z}\vspace{5pt}\\
t_{m_2M_2y}&t_{m_2M_2z}\endbmatrix \otimes m_1^*\vspace{5pt}\\
+\sum\limits_{m_1\in \binom{y,z}{n}} 
\sum\limits_{M\in \binom{x,y,z}{n-2}}x[\chi(y|m_1)Q_{\frac{m_1}y,xM}t_{Mm_2z}-\chi(z|m_1)Q_{\frac{m_1}z,xM}t_{Mm_2y}]\otimes m_1\vspace{5pt}\\
+y\pmb \delta\otimes zm_2-z\pmb\delta\otimes ym_2.
\end{cases}$$

\item \label{explc-2-b} If $m_2\in \binom{y,z}{n}$, then $b_2(1\otimes m_2^*)$ is equal to 

$$\begin{cases}
\phantom{+}\chi(z|m_2)\left[
\sum\limits_{m'\in \binom{x,y,z}{n-2}}\sum\limits_{M\in \binom{y,z}{n-1}}
xt_{yMm'}Q_{xm',\frac{m_2}{z}}\otimes M^*-y\pmb\delta \otimes (\frac{m_2}z)^*
+\sum\limits_{M\in \binom{y,z}{n-1}}xQ_{M,\frac{m_2}z}\otimes yM
\right]
\vspace{5pt}\\
-\chi(y|m_2)\left[
\sum\limits_{m'\in \binom{x,y,z}{n-2}}\sum\limits_{M\in \binom{y,z}{n-1}}
xt_{zMm'}Q_{xm',\frac{m_2}{y}}\otimes M^*-z\pmb\delta \otimes (\frac{m_2}y)^*
+\sum\limits_{M\in \binom{y,z}{n-1}}xQ_{M,\frac{m_2}y}\otimes zM
\right].

\end{cases}$$

\end{enumerate}

\item\label{explc-3} The ${\mathrm{R}}$-module homomorphism   $b_3$ is  given by
$$b_3(1)=\sum\limits_{m\in \binom{y,z}{n-1}}b_1(1\otimes m^*) \otimes m
+\sum\limits_{m\in \binom{y,z}{n}}b_1(1\otimes m) \otimes m^*.$$
\end{enumerate}
\end{proposition}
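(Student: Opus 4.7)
The plan is to translate each differential from the coordinate-free formulation of Definition~\ref{main-def} into the coordinates of the polynomial ring ${\mathrm{R}}$. The essential tools are already in place: Remark~\ref{real-10.2} converts $\Psi\circ q$ applied to a dual monomial basis element $m^*$ into the polynomial $\lambda_m$; Observation~\ref{critical2} expands $m({\widetilde{\Phi}})$ for $m\in\binom{y,z}{n}$ as an explicit sum involving $t_{mm_2}\otimes(xm_2)^*$; the identity~(\ref{Q-sub}) converts $\mathfrak Q$ evaluated on a pair of dual monomial basis elements into an entry $Q_{m_1,m_2}$; Observation~\ref{b2} supplies a convenient reformulation of $b_2$ on each summand of $B_2$; and the symmetries $t_{m_1m_2}=t_{m_2m_1}$ and $Q_{m_1,m_2}=Q_{m_2,m_1}$ will be used freely when relabeling summation indices.

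First I would prove part~(\ref{explc-1}). For $m\in\binom{y,z}{n-1}$, Definition~\ref{main-def}.\ref{main-def-c} gives $b_1(1\otimes m^*)=\Psi(x)\cdot\Psi(q(m^*))$, which equals $x\lambda_m$ by Remark~\ref{real-10.2}. For $m\in\binom{y,z}{n}$, the same definition gives $b_1(1\otimes m)=\pmb\delta\cdot\Psi(m)-\Psi(x)\cdot\Psi(q(m({\widetilde{\Phi}})))$; substituting the expansion of $m({\widetilde{\Phi}})$ from Observation~\ref{critical2} and then applying Remark~\ref{real-10.2} to each term $q((xm_1)^*)$ produces the stated formula. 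Part~(\ref{explc-3}) is then an immediate consequence: since $b_3(1)=b_1$ and $B_1=B_2^*$, one chooses the monomial basis $\binom{y,z}{n-1}\cup\{m^*\mid m\in\binom{y,z}{n}\}$ of $B_2$, takes the dual basis $\{m^*\mid m\in\binom{y,z}{n-1}\}\cup\binom{y,z}{n}$ of $B_1$, and recognizes the displayed formula as the expansion of the element of $B_2$ corresponding to $b_1\in\operatorname{Hom}(B_1,{\mathfrak R})$ in this dual basis.

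The bulk of the work is part~(\ref{explc-2}). The strategy is to start from the two formulas of Observation~\ref{b2}, substitute $(y\mu)({\widetilde{\Phi}})$ and $(z\mu)({\widetilde{\Phi}})$ via Observation~\ref{critical2} wherever such terms appear, evaluate every occurrence of $y(m^*)$ or $z(m^*)$ using~(\ref{xim}) (which produces the characteristic factors $\chi(y|m)$ and $\chi(z|m)$ along with the divided monomial), and finally replace every $\mathfrak Q$ of dual basis elements by the corresponding entry $Q_{\cdot,\cdot}$ via~(\ref{Q-sub}). The $2\times 2$ determinant appearing in~(\ref{explc-2-a}) emerges from the difference $\mathfrak Q((z\mu)({\widetilde{\Phi}})\otimes(ym_1)({\widetilde{\Phi}}))-\mathfrak Q((y\mu)({\widetilde{\Phi}})\otimes(zm_1)({\widetilde{\Phi}}))$ after both arguments are fully expanded by Observation~\ref{critical2} and the resulting products of $t$-variables are reorganized by the commutativity of monomial multiplication.

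The main obstacle is purely bookkeeping: keeping straight which summand each dual element lies in (elements of $D_{n-1}^{\mathbb Z}(U_0^*)$ versus those $(xm)^*\in D_{n-1}^{\mathbb Z}(U^*)$ divisible by $x$), correctly inserting the characteristic function factors from~(\ref{xim}) so that expressions such as $y(m^*)$ vanish when $y\nmid m$, and renaming dummy summation variables so that the final expression displays the expected $y\leftrightarrow z$ symmetry. No new conceptual ingredient beyond Observations~\ref{b2} and~\ref{critical2}, Remark~\ref{real-10.2}, and equation~(\ref{Q-sub}) is required.
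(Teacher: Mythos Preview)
Your proposal is correct and follows essentially the same route as the paper: both arguments begin from Definition~\ref{main-def}.\ref{main-def-c} and Observation~\ref{b2}, expand the ${\widetilde{\Phi}}$-terms via Observation~\ref{critical2}, convert each $\mathfrak Q$ to an entry of $Q$ via~(\ref{Q-sub}), and apply Remark~\ref{real-10.2} together with the identifications~(\ref{in-place}) to land in ${\mathrm{R}}$. Your explicit mention of~(\ref{xim}) for the $\chi$-factors and your explanation of part~(\ref{explc-3}) as the dual-basis expansion of $b_1\in B_2^*$ simply make explicit what the paper leaves implicit.
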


\begin{proof} We prove (\ref{explc-1}). Let $m_2\in \binom{y,z}{n-1}$. We see that
$$\begin{array}{rcll}
b_1(1\otimes m_2^*)&=& \Psi(x)\cdot \Psi(q(m_2^*))&(\text{\ref{main-def}.\ref{main-def-c}})\vspace{5pt}\\ 
&=&x\lambda_{m_2},&\text{(\ref{in-place}) and (\ref{real-10.2})}
\end{array}$$
as expected.
Let $m_2\in \binom{y,z}{n}$. We see that
$$\begin{array}{rcll}
b_1(1\otimes m_2)&=& \pmb\delta\cdot \Psi(m_2)-\Psi(x)\cdot (\Psi\circ q)(m_2({\widetilde{\Phi}}))&\text{(\ref{main-def}.\ref{main-def-c})}\vspace{5pt}\\
&=&\pmb\delta\cdot \Psi(m_2)-\Psi(x)\cdot \sum\limits_{m_1\in \binom{x,y,z}{n-2}}t_{m_1m_2}\otimes (\Psi\circ q)((xm_1)^*)&\text{(\ref{critical2})}\vspace{5pt}\\
&=&\pmb\delta m_2- x
\sum\limits_{m_1\in \binom{x,y,z}{n-2}}\lambda_{xm_1}t_{m_1m_2},&\text{(\ref{real-10.2}) and (\ref{in-place})}
\end{array}$$as expected. The proof of (\ref{explc-1}) is complete. Assertion (\ref{explc-3}) follows automatically.

We prove (\ref{explc-2-a}). Let $m_2\in \binom{y,z}{n-1}$. We use Observation~\ref{b2}.\ref{b2-a} to see that $b_2(1\otimes m_2)$ is equal to 

\begin{longtable}{ll}
$=\begin{cases}
\phantom{+}
\Psi(x) \sum\limits_{m_1\in \binom{y,z}{n-1}}
\left[\mathfrak Q((zm_2)({\widetilde{\Phi}})\otimes (ym_1)({\widetilde{\Phi}}))-\mathfrak Q((ym_2)({\widetilde{\Phi}})\otimes (zm_1)({\widetilde{\Phi}}))\right]
\otimes m_1^*
\vspace{5pt}\\
+\Psi(x) \sum\limits_{m_1\in \binom{y,z}{n}}
\left[\vphantom{{\widetilde{\Phi}}}\mathfrak Q((zm_2)({\widetilde{\Phi}})\otimes y(m_1^*))-\mathfrak Q((ym_2)({\widetilde{\Phi}})\otimes z(m_1^*))\right ]\otimes m_1\vspace{5pt}\\
+
\Psi(y)\cdot  \pmb \delta \otimes zm_2- \Psi(z)\cdot \pmb\delta \otimes ym_2.
\end{cases}

$\vspace{5pt}\\

$=\begin{cases} \sum\limits_{m_1\in \binom{y,z}{n-1}} 
\sum\limits_{M_1,M_2\in \binom{x,y,z}{n-2}}\Psi(x)\cdot [
t_{zm_2M_2}t_{ym_1M_1}
-t_{ym_2M_2}t_{zm_1M_1}]\mathfrak Q(
 (xM_2)^*
\otimes 
(xM_1)^*
)\otimes m_1^*\vspace{5pt}\\
+\sum\limits_{m_1\in \binom{y,z}{n}}\sum\limits_{M\in \binom{x,y,z}{n-2}}  
\Psi(x)\cdot [t_{zm_2M} \mathfrak Q((xM)^*\otimes y(m_1^*))
-t_{ym_2M}\mathfrak Q((xM)^*\otimes z(m_1^*))]
\otimes m_1\vspace{5pt}\\
+
\Psi(y)\cdot\pmb\delta\otimes zm_2- \Psi(z)\cdot\pmb\delta\otimes ym_2 \end{cases}$&\text{(\ref{critical2})}\vspace{5pt}\\

$=\begin{cases} \sum\limits_{m_1\in \binom{y,z}{n-1}} 
\sum\limits_{M_1,M_2\in \binom{x,y,z}{n-2}}\Psi(x)\cdot [
t_{zm_2M_2}t_{ym_1M_1}
-t_{ym_2M_2}t_{zm_1M_1}]Q_{xM_2,xM_1}
\otimes m_1^*\vspace{5pt}\\
+\sum\limits_{m_1\in \binom{y,z}{n}}\sum\limits_{M\in \binom{x,y,z}{n-2}}  
\Psi(x)\cdot [\chi(y|m_1)t_{zm_2M}  Q_{xM,\frac{m_1}y}
-\chi(z|m_1)t_{ym_2M} Q_{xM,\frac {m_1}z}]
\otimes m_1\vspace{5pt}\\
+
\Psi(y)\cdot\pmb\delta\otimes zm_2- \Psi(z)\cdot\pmb\delta\otimes ym_2 \end{cases}$&\text{(\ref{Q-sub})}\vspace{5pt}\\
$=\begin{cases} \sum\limits_{m_1\in \binom{y,z}{n-1}} 
\sum\limits_{M_1,M_2\in \binom{x,y,z}{n-2}}x[
t_{zm_2M_2}t_{ym_1M_1}
-t_{ym_2M_2}t_{zm_1M_1}]Q_{xM_2,xM_1}
\otimes m_1^*\vspace{5pt}\\
+\sum\limits_{m_1\in \binom{y,z}{n}}\sum\limits_{M\in \binom{x,y,z}{n-2}}  
x[\chi(y|m_1)t_{zm_2M}  Q_{xM,\frac{m_1}y}
-\chi(z|m_1)t_{ym_2M} Q_{xM,\frac {m_1}z}]
\otimes m_1\vspace{5pt}\\
+
y\pmb\delta\otimes zm_2- z\pmb\delta\otimes ym_2, \end{cases}$&\text{(\ref{in-place})}

\end{longtable}
\noindent as expected. We prove (\ref{explc-2-b}). Let $m_2\in \binom{y,z}{n}$. We use Observation~\ref{b2}.\ref{b2-b} to see that $b_2(1\otimes m_2^*)$ is equal to 

\begin{longtable}{ll}
$=\begin{cases} -\sum\limits_{m_1\in \binom{y,z}{n-1}} 
\Psi(x)\cdot [\mathfrak Q((zm_1)({\widetilde{\Phi}})\otimes y(m_2^*))-\mathfrak Q((ym_1)({\widetilde{\Phi}})\otimes z(m_2^*))]\otimes m_1^*\vspace{5pt}\\
+\sum\limits_{m_1\in \binom{y,z}{n}} 
\Psi(x)\cdot [\mathfrak Q(z(m_2^*)\otimes y(m_1^*))-\mathfrak Q(y(m_2^*)\otimes z(m_1^*))]
\otimes m_1\vspace{5pt}\\
-\Psi(y)\cdot\pmb\delta\otimes z(m_2^*)+ \Psi(z)\cdot\pmb\delta\otimes  y(m_2^*)
\end{cases}$&{\rm (\ref{main-def}.\ref{main-def-a})}\vspace{5pt}\\

$=\begin{cases} -\sum\limits_{m_1\in \binom{y,z}{n-1}} \sum\limits_{m \in \binom{x,y,z}{n-2}}
\Psi(x)\cdot [t_{zm_1m}\mathfrak Q(
  (xm )^*

\otimes y(m_2^*))-t_{ym_1m}\mathfrak Q(
  (xm )^*

\otimes z(m_2^*))]\otimes m_1^*\vspace{5pt}\\
+\sum\limits_{m_1\in \binom{y,z}{n}} 
\Psi(x)\cdot [\mathfrak Q(z(m_2^*)\otimes y(m_1^*))-\mathfrak Q(y(m_2^*)\otimes z(m_1^*))]
\otimes m_1\vspace{5pt}\\
-\Psi(y)\cdot\pmb\delta\otimes z(m_2^*)+ \Psi(z)\cdot\pmb\delta\otimes  y(m_2^*)
\end{cases}$&\text{(\ref{critical2})}\vspace{5pt}\\

$=\begin{cases} -\sum\limits_{m_1\in \binom{y,z}{n-1}} \sum\limits_{m \in \binom{x,y,z}{n-2}}
\Psi(x)\cdot [t_{zm_1m}\chi(y|m_2)Q_{xm,\frac{m_2}y}-t_{ym_1m} \chi(z|m_2)Q_{xm,\frac{m_2}z}]\otimes m_1^*\vspace{5pt}\\
+\sum\limits_{m_1\in \binom{y,z}{n-1}} 
\Psi(x)\cdot \chi(z|m_2) Q_{\frac{m_2}z,m_1}\otimes ym_1
-\sum\limits_{m_1\in \binom{y,z}{n-1}}\Psi(x)\cdot \chi(y|m_2)Q_{\frac{m_2}y,m_1}
\otimes zm_1\vspace{5pt}\\
-\Psi(y)\cdot\pmb\delta\otimes z(m_2^*)+ \Psi(z)\cdot\pmb\delta\otimes  y(m_2^*)
\end{cases}$&\text{(\ref{Q-sub})}\vspace{5pt}\\

$=\begin{cases} -\sum\limits_{m_1\in \binom{y,z}{n-1}} \sum\limits_{m \in \binom{x,y,z}{n-2}}
x[t_{zm_1m}\chi(y|m_2)Q_{xm,\frac{m_2}y}-t_{ym_1m} \chi(z|m_2)Q_{xm,\frac{m_2}z}]\otimes m_1^*\vspace{5pt}\\
+\sum\limits_{m_1\in \binom{y,z}{n-1}} 
x\chi(z|m_2) Q_{\frac{m_2}z,m_1}\otimes ym_1
-\sum\limits_{m_1\in \binom{y,z}{n-1}}x\chi(y|m_2)Q_{\frac{m_2}y,m_1}
\otimes zm_1\vspace{5pt}\\
-y\pmb\delta\otimes z(m_2^*)+ z\pmb\delta\otimes  y(m_2^*),
\end{cases}$&\text{(\ref{in-place})}
\end{longtable}
as expected. 
\end{proof}

\section{Examples}\label{ex}

Consider the resolution $(\mathbb B,b)$ of Definition~\ref{main-def}, with $n=3$. Let $x,y,z$ be a basis for $U$ and write $R=\mathbb Z[x,y,z]$ in place of ${\mathfrak R}_{(\bullet,0)}={\operatorname{Sym}}_{\bullet}^{\mathbb Z}(U)$.  For each index $i$, with $0\le i\le 3$, we exhibit an $R$-algebra homomorphism $\rho_i: {\mathfrak R} \to R$,  with the property that $\rho_i({\mathfrak R}_{(0,1)})\subseteq \mathbb Z$ and $\rho_i(\pmb \delta)\neq 0$  in $\mathbb Z$. For each $i$, we record the resolution $\rho_i\otimes_{{\mathfrak R}} \mathbb B_{\pmb \delta}$ of $R_{\pmb \delta}/I_i$ by free $R_{\pmb \delta}$-modules, where $I_i$ is the annihilator of $\Phi_i= \rho_i\otimes_{{\mathfrak R}} \Phi$.  We focus on the particular ideals $I_0,\dots,I_3$ because it is shown in \cite{EKK} that none of these four ideals may be obtained from another by way of change of variables. (Actually, the calculation in \cite{EKK} is made over a field of characteristic zero; hence, the conclusion also holds over $\mathbb Z_{\pmb \delta}$.) If one looks at the examples from the point of view of the presentation matrices $\rho_i\otimes b_2$, then the work in \cite{EKK} says that none of the presentation matrices  $\rho_i\otimes b_2$ may be obtained from another by performing a sequence of change of variables and invertible row and column operations. (This is the topic of Project~\ref{P4} from the Introduction.)

To describe the $R$-algebra homomorphism $\rho_i: {\mathfrak R} \to R$, it suffices to record $\Phi_i= \rho_i\otimes_{{\mathfrak R}} \Phi$ because ${\mathfrak R}$ is a polynomial ring over $R$;  each variable of ${\mathfrak R}$ over $R$ appears as a coefficient in $$\Phi=\sum_{m\in\binom{x,y,z}{2n-2}}\Phi(m)\otimes m^*\in {\mathfrak R}\otimes_{{\mathbb Z}} D_{2n-2}(U^*);$$ and ${\mathfrak R}$ is the polynomial ring $R[\{\Phi(m)\mid m\in\binom{x,y,z}{2n-2}\}]$. At any rate, 
\begin{equation}\label{phis}\begin{array}{lll}
\Phi_0&=&(x^2y^2)^*-(xyz^2)^*+2(z^4)^*+(x^4)^*+2(y^4)^{*},\vspace{5pt}\\
\Phi_1&=&(x^2y^2)^*-(xyz^2)^*+2(z^4)^*+(x^4)^*,\vspace{5pt}\\
\Phi_2&=&(x^2y^2)^*-(xyz^2)^*+2(z^4)^*,\text{ and}\vspace{5pt}\\ 
\Phi_3&=& (y^2z^2)^*+(x^2z^2)^*+(x^2y^2)^*+2(xyz^2)^*+2(xy^2z)^*+2(x^2yz)^*.
\end{array}\end{equation}The fact that none of the ideals $\{I_i\}$ may be obtained for any other ideal from this list by way of change of variables is due to the fact that there are $i$ linearly independent linear forms $\ell_1,\dots,\ell_i$ in $\mathbb Z[x,y,z]$ with $\ell_1^3,\dots,\ell_i^3$ in $I_i$ but there does not exist $i+1$ such linearly independent linear forms. (The existence of $\ell_1,\dots,\ell_i$ is clear in each case; the non-existence of $i+1$ such linear forms requires a calculation and this calculation is made in \cite[Prop.~7.14]{EKK}.) The ideal $I_2$ is generated by the maximal order Pfaffians of the Buchsbaum-Eisenbud  matrix \cite[Sect.~6, pg.~480]{BE77}$$\bmatrix 
0&x&0&0&0&0&z\vspace{5pt}\\
-x&0&y&0&0&z&0\vspace{5pt}\\
0&-y&0&x&z&0&0\vspace{5pt}\\
0&0&-x&0&y&0&0\vspace{5pt}\\
0&0&-z&-y&0&x&0\vspace{5pt}\\
0&-z&0&0&-x&0&y\vspace{5pt}\\
-z&0&0&0&0&-y&0\endbmatrix.$$ (A proof of this assertion is contained in \cite[Prop.~7.10]{EKK}.) The Macaulay inverse systems $\Phi_1$ and $\Phi_0$ are  modifications of $\Phi_2$. It is shown in Lemma~\ref{6.2} that the ideal $I_3$ is equal  \begin{equation}\label{6.0}(x^3,y^3,z^3):(x+y+z)^2.\end{equation} 

Now that the $\Phi_i$ are defined in (\ref{phis}), the $R$-algebra homomorphisms $\rho_i$ are implicitly defined, as described above (\ref{phis}). For each $i$, we record the matrices $T_i=\rho_i\otimes T$ and $Q_i=\rho_i\otimes Q$ for $T$ and $Q$ from Remark~\ref{10.1}. We express these matrices using the basis 
$x^2,xy,xz,y^2,yz,z^2$ for ${\operatorname{Sym}}_2(U)$. We also describe the homomorphisms $\rho_i\otimes b_1$ and $\rho_i\otimes b_2$ using the basis $(y^2)^*,(yz)^*,(z^2)^*,y^3,y^2z,yz^2,z^3$ for $\rho_i\otimes B_1$ and the basis $y^2,yz,z^2,(y^3)^*,(y^2z)^*,(yz^2)^*,(z^3)^*$ for $\rho_i\otimes B_2$.

 \begin{example} When $i=0$, then $\Phi_0= (x^2y^2)^*-(xyz^2)^*+2(z^4)^*+(x^4)^*+2(y^4)^{*}$,
$$T_0=\bmatrix
1& 0 & 0  &1 &0  &0  \vspace{5pt}\\
0& 1 & 0  &0 &0  &-1 \vspace{5pt}\\
0& 0 & 0  &0 &-1 &0  \vspace{5pt}\\
1& 0 & 0  &2 &0  &0  \vspace{5pt}\\
0& 0 & -1 &0 &0  &0  \vspace{5pt}\\
0& -1& 0  &0 &0  &2  \endbmatrix,\quad  Q_0=\bmatrix
 -2& 0 & 0& 1 & 0& 0  \vspace{5pt}\\
 0 & -2& 0& 0 & 0& -1 \vspace{5pt}\\
 0 & 0 & 0& 0 & 1& 0  \vspace{5pt}\\
 1 & 0 & 0& -1& 0& 0  \vspace{5pt}\\
 0 & 0 & 1& 0 & 0& 0  \vspace{5pt}\\
 0 & -1& 0& 0 & 0& -1 \vspace{5pt}\\
\endbmatrix,$$  $\rho_0(\pmb \delta)=-1$, 
$$\begin{array}{lll}
(\rho_0\otimes b_1)(1\otimes (y^2)^*)&=&x^3-xy^2,\vspace{5pt}\\
(\rho_0\otimes b_1)(1\otimes (yz)^*)&=&x^2z,\vspace{5pt}\\
(\rho_0\otimes b_1)(1\otimes (z^2)^*)&=&-x^2y-xz^2,\vspace{5pt}\\
(\rho_0\otimes b_1)(1\otimes y^3)&=&- y^3 +4x^2y+2xz^2,\vspace{5pt}\\
(\rho_0\otimes b_1)(1\otimes y^2z)&=&- y^2z,\vspace{5pt}\\
(\rho_0\otimes b_1)(1\otimes yz^2)&=&- yz^2 -2x^3+xy^2,\vspace{5pt}\\
(\rho_0\otimes b_1)(1\otimes z^3)&=&- z^3 -2xyz, \text{ and}\end{array}$$
\begin{equation}\label{star}\rho_0\otimes b_2=\left[\begin{array}{rrrrrrr}
0&0&0&-z&y&0&-2x\vspace{5pt}\\
0&0&2x&x&-z&y&0\vspace{5pt}\\
0&-2x&0&0&-3x&-z&y\vspace{5pt}\\
z&-x&0&0&-x&0&0\vspace{5pt}\\
-y&z&3x&x&0&0&0\vspace{5pt}\\
0&-y&z&0&0&0&-x\vspace{5pt}\\
2x&0&-y&0&0&x&0
\end{array}\right].\end{equation} We provide a few details in this example; however the calculations are straightforward and we suppress them in Examples \ref{i=1}, \ref{i=2}, and \ref{6.4}. The rows and columns of $T_0$ are tagged by the monomials 
$x^2,xy,xz,y^2,yz,z^2$ in that order. The entry in row $m_i$, column $m_j$ is $(m_im_j)(\Phi_0)$. So in particular, row $4$ of $T_0$ is $$[(y^2x^2)(\Phi_0),(y^2xy)(\Phi_0),(y^2xz)(\Phi_0),(y^2y^2)(\Phi_0),
(y^2yz)(\Phi_0),(y^2z^2)(\Phi_0)]=[1,0,0,2,0,0].$$
One computes $\rho_0(\pmb \delta)= \det T_0$ and the classical adjoint $Q_0$, which is equal to, $(\det T_0)$ times the inverse of $T_0$. The recipe of Definition~\ref{10.2} gives
$$\begin{array}{lll}
\rho_0(\lambda_{x^2})&=&-2x^2+y^2,\\
\rho_0(\lambda_{xy})&=&-2xy-z^2,\\
\rho_0(\lambda_{xz})&=&yz,\\
\rho_0(\lambda_{y^2})&=&x^2-y^2,\\
\rho_0(\lambda_{yz})&=&xz,\\
\rho_0(\lambda_{z^2})&=&-xy-z^2,\end{array}$$
and Proposition~\ref{explc}.\ref{explc-1} immediately gives   the value of $(\rho_0\otimes b_1)$ applied to $(y^2)^*$, $(yz)^*$, $(z^2)^*$. We compute
$$\begin{array}{lll}(\rho_0\otimes b_1)(1\otimes y^3)&=&\rho_0(\pmb\delta y^3 -x\sum\limits_{m_1\in \binom{x,y,z}{1}}\lambda_{xm_1} t_{m_1y^3})\vspace{5pt}\\
&=&- y^3 -x\rho_0(\lambda_{xx} t_{xy^3}+\lambda_{xy} t_{yy^3}+\lambda_{xz} t_{zy^3})\vspace{5pt}\\
&=&- y^3 -x((-2x^2+y^2) (0)+(-2xy-z^2) (2)+yz (0))\vspace{5pt}\\
&=&- y^3 -x(2(-2xy-z^2) )=- y^3 +4x^2y+2xz^2.\end{array}$$
One computes $(\rho_0\otimes b_1)$ of the basis elements $y^2z$, $yz^2$ and $z^3$ in a similar manner.

Use Proposition~\ref{explc}.\ref{explc-2-a} to see that $$(\rho_0\otimes b_2)(1\otimes y^2) = \rho_0\begin{cases}
\phantom{+}\sum\limits_{m_1\in \binom{y,z}{2}}\sum\limits_{M_1,M_2\in \binom{x,y,z}{1}}xQ_{xM_1,xM_2}
\det\bmatrix t_{m_1M_1y}&t_{m_1M_1z}\vspace{5pt}\\
t_{y^2M_2y}&t_{y^2M_2z}\endbmatrix \otimes m_1^*\vspace{5pt}\\
+\sum\limits_{m_1\in \binom{y,z}{3}} 
\sum\limits_{M\in \binom{x,y,z}{1}}x[\chi(y|m_1)Q_{\frac{m_1}y,xM}t_{My^2z}-\chi(z|m_1)Q_{\frac{m_1}z,xM}t_{My^2y}]\otimes m_1\vspace{5pt}\\
-y\otimes y^2z+z\otimes y^3.
\end{cases}$$
We have seen that $\rho_0(t_{y^2M_2y})=2\chi(M_2=y)$ and $\rho_0(t_{y^2M_2z})=0$. It follows that 

$$(\rho_0\otimes b_2)(1\otimes y^2) = \rho_0\begin{cases}
-2x\sum\limits_{m_1\in \binom{y,z}{2}}\sum\limits_{M_1\in \binom{x,y,z}{1}}Q_{xM_1,xy}
t_{m_1M_1z}
\otimes m_1^*\vspace{5pt}\\
-2x\sum\limits_{m_1\in \binom{y,z}{3}} 
\chi(z|m_1)Q_{\frac{m_1}z,xy}\otimes m_1\vspace{5pt}\\
-y\otimes y^2z+z\otimes y^3.
\end{cases}$$
We have seen that $\rho_0(Q_{xM_1,xy})=-2\chi(M_1=y)$ and if $m_1\in \binom{y,z}3$, then $$\rho_0(\chi(z|m_1)Q_{\frac{m_1}z,xy})=-\chi(m_1=z^3).$$ It follows that

$$(\rho_0\otimes b_2)(1\otimes y^2) = 4x\sum\limits_{m_1\in \binom{y,z}{2}}\rho_0(t_{m_1yz})
\otimes m_1^*
+2x\otimes z^3-y\otimes y^2z+z\otimes y^3.
$$
We see that $\rho_0(t_{m_1yz})=0$ for $m_1\in \binom{y,z}{2}$. We conclude that 
$$(\rho_0\otimes b_2)(1\otimes y^2) = 
2x\otimes z^3-y\otimes y^2z+z\otimes y^3.
$$ We have recorded this calculation as column one of (\ref{star}). One computes $\rho_0\otimes b_2$ of the basis elements $yz$ and $z^2$ in a similar manner. 

Use Proposition~\ref{explc}.\ref{explc-2-b} to see that $(\rho_0\otimes b_2)(1\otimes (y^3)^*)$
is equal to $$=-\rho_0\left[
\sum\limits_{m'\in \binom{x,y,z}{1}}\sum\limits_{M\in \binom{y,z}{2}}
xt_{zMm'}Q_{xm',y^2}\otimes M^*-z\pmb\delta \otimes (y^2)^*
+\sum\limits_{M\in \binom{y,z}{2}}xQ_{M,y^2}\otimes zM
\right].
$$
Observe that $\rho_0(Q_{xm',y^2})=\chi(m'=x)$ and if $M\in \binom{y,z}{2}$, then $\rho_0(Q_{M,y^2})=-\chi(M=y^2)$. It follows that 
$$
(\rho_0\otimes b_2)(1\otimes (y^3)^*)
=
-\rho_0\left[
\sum\limits_{M\in \binom{y,z}{2}}
xt_{zMx}\otimes M^*+z \otimes (y^2)^*
-x\otimes y^2z
\right].
$$
The value of $\rho_0(t_{zMx})$ is $-\chi(M=yz)$; thus,
$$(\rho_0\otimes b_2)(1\otimes (y^3)^*)=
-\left[
-x\otimes (yz)^*+z \otimes (y^2)^*
-x\otimes y^2z
\right].
$$  We have recorded this calculation as column four of (\ref{star}). 
One computes $\rho_0\otimes b_2$ of the basis elements $(y^2z)^*$, $(yz^2)^*$,  and $(z^3)^*$ in a similar manner. \end{example}
\begin{example}\label{i=1} When $i=1$, then $\Phi_1= (x^2y^2)^*-(xyz^2)^*+2(z^4)^*+(x^4)^*$,
$$T_1=\bmatrix
1& 0 & 0  &1 &0  &0  \vspace{5pt}\\
0& 1 & 0  &0 &0  &-1 \vspace{5pt}\\
0& 0 & 0  &0 &-1 &0  \vspace{5pt}\\
1& 0 & 0  &0 &0  &0  \vspace{5pt}\\
0& 0 & -1 &0 &0  &0  \vspace{5pt}\\
0& -1& 0  &0 &0  &2  \endbmatrix\quad Q_1=\bmatrix
 0& 0& 0 & 1 &0  &0 \vspace{5pt}\\
 0& 2& 0 & 0 &0  &1 \vspace{5pt}\\
 0& 0& 0 & 0 &-1 &0 \vspace{5pt}\\
 1& 0& 0 &-1 &0  &0 \vspace{5pt}\\
 0& 0& -1& 0 &0  &0 \vspace{5pt}\\
 0& 1& 0 & 0 &0  &1 
\endbmatrix, \quad \rho_1(\pmb\delta)=1,$$
$$\begin{array}{lll}
(\rho_1\otimes b_1)(1\otimes (y^2)^*)&=&x^3-xy^2,\vspace{5pt}\\
(\rho_1\otimes b_1)(1\otimes (yz)^*)&=&-x^2z,\vspace{5pt}\\
(\rho_1\otimes b_1)(1\otimes (z^2)^*)&=&x^2y+xz^2,\vspace{5pt}\\
(\rho_1\otimes b_1)(1\otimes y^3)&=&y^3, \vspace{5pt}\\
(\rho_1\otimes b_1)(1\otimes y^2z)&=&y^2z, \vspace{5pt}\\
(\rho_1\otimes b_1)(1\otimes yz^2)&=&yz^2 +xy^2, \vspace{5pt}\\
(\rho_1\otimes b_1)(1\otimes z^3)&=&z^3 +2xyz, \ \text{and}\end{array}$$

$$\rho_1\otimes b_2=
\bmatrix 
 0& 0& 0& z&-y& 0& 0\vspace{5pt}\\
 0& 0& 0& x& z&-y& 0\vspace{5pt}\\
 0& 0& 0& 0& x& z&-y\vspace{5pt}\\
-z&-x& 0& 0&-x& 0& 0\vspace{5pt}\\
 y&-z&-x& x& 0& 0& 0\vspace{5pt}\\ 
 0& y&-z& 0& 0& 0& x\vspace{5pt}\\
 0& 0& y& 0& 0&-x& 0\endbmatrix.$$
\end{example}
\begin{example}\label{i=2} When $i=2$, then $\Phi_2=(x^2y^2)^*-(xyz^2)^*+2(z^4)^*$, 

$$T_2=\bmatrix
0& 0 & 0  &1 &0  &0  \vspace{5pt}\\
0& 1 & 0  &0 &0  &-1 \vspace{5pt}\\
0& 0 & 0  &0 &-1 &0  \vspace{5pt}\\
1& 0 & 0  &0 &0  &0  \vspace{5pt}\\
0& 0 & -1 &0 &0  &0  \vspace{5pt}\\
0& -1& 0  &0 &0  &2  \endbmatrix, \quad Q_2=\bmatrix
 0& 0& 0 & 1 &0  &0 \vspace{5pt}\\
 0& 2& 0 & 0 &0  &1 \vspace{5pt}\\
 0& 0& 0 & 0 &-1 &0 \vspace{5pt}\\
 1& 0& 0 & 0 &0  &0 \vspace{5pt}\\
 0& 0& -1& 0 &0  &0 \vspace{5pt}\\
 0& 1& 0 & 0 &0  &1 
\endbmatrix, \quad  \rho_2(\pmb\delta)=1,$$
$$\begin{array}{lll}
(\rho_2\otimes b_1)(1\otimes (y^2)^*)&=&x^3,\vspace{5pt}\\
(\rho_2\otimes b_1)(1\otimes (yz)^*)&=&-x^2z,\vspace{5pt}\\
(\rho_2\otimes b_1)(1\otimes (z^2)^*)&=&x^2y+xz^2,\vspace{5pt}\\
(\rho_2\otimes b_1)(1\otimes y^3)&=&y^3, \vspace{5pt}\\
(\rho_2\otimes b_1)(1\otimes y^2z)&=&y^2z, \vspace{5pt}\\
(\rho_2\otimes b_1)(1\otimes yz^2)&=&yz^2 +xy^2, \vspace{5pt}\\
(\rho_2\otimes b_1)(1\otimes z^3)&=&z^3 +2xyz,\text{ and}\end{array}$$
$$\rho_2\otimes b_2=
\bmatrix 
 0& 0& 0& z&-y& 0& 0\vspace{5pt}\\
 0& 0& 0& x& z&-y& 0\vspace{5pt}\\
 0& 0& 0& 0& x& z&-y\vspace{5pt}\\
-z&-x& 0& 0& 0& 0& 0\vspace{5pt}\\
 y&-z&-x& 0& 0& 0& 0\vspace{5pt}\\ 
 0& y&-z& 0& 0& 0& x\vspace{5pt}\\
 0& 0& y& 0& 0&-x& 0\endbmatrix.$$
\end{example}

\begin{example}\label{6.4} When $i=3$, then $\Phi_3= (y^2z^2)^*+(x^2z^2)^*+(x^2y^2)^*+2(xyz^2)^*+2(xy^2z)^*+2(x^2yz)^*$,
$$T_3 = \left(\begin{array}{cccccc} 0&0&0&1&2&1\vspace{5pt}\\0&1&2&0&2&2\vspace{5pt}\\0&2&1&2&2&0\vspace{5pt}\\1&0&2&0&0&1\vspace{5pt}\\2&2&2&0&1&0\vspace{5pt}\\1&2&0&1&0&0 \end{array} \right),\quad Q_3 = \left(\begin{array}{rrrrrrr} 27&-18&-18&9&18&9\vspace{5pt}\\-18&18&0&-18&0&18\vspace{5pt}\\-18&0&18&18&0&-18\vspace{5pt}\\9&-18&18&27&-18&9\vspace{5pt}\\18&0&0&-18&18&-18\vspace{5pt}\\9&18&-18&9&-18&27 \end{array} \right),$$ $\rho_3(\pmb \delta)=54$,
\begin{equation}\label{x3}\begin{array}{lll}  
(\rho_3\otimes b_1)(1 \otimes (y^2)^*) &=&x(9x^2-18xy+18xz+27y^2-18yz+9z^2),\vspace{5pt}\\
(\rho_3\otimes b_1)(1\otimes (yz)^*)&=&x(18x^2-18y^2+18yz-18z^2),  \vspace{5pt}\\
(\rho_3\otimes b_1)(1\otimes (z^2)^*)&=&x(9x^2+18xy-18xz+9y^2-18yz+27z^2), \vspace{5pt}\\
(\rho_3\otimes b_1)(1\otimes y^3)&=&54y^3,\vspace{5pt}\\
(\rho_3\otimes b_1)(1\otimes y^2z)&=&54y^2z-x(36x^2-36xy-18xz+36y^2+36yz), \vspace{5pt}\\
(\rho_3\otimes b_1)(1\otimes yz^2)&=&54yz^2 -x(36x^2-18xy-36xz+36yz+36z^2),\vspace{5pt}\\
(\rho_3\otimes b_1)(1\otimes z^3)&=&54z^3, \end{array}\end{equation}and $\rho_3\otimes b_2$ is equal to 
$$  \left(\begin{array}{ccccccc}0&-54x&-36x&-36x+54z&-36x-54y&0&0\vspace{5pt}\\ 54x&0&-54x&0&54z&-54y&0\vspace{5pt}\\ 36x&54x&0& 0&0&36x+54z&36x- 54y\vspace{5pt}\\
36x-54z&0&0&0&27x&-18x&9x\vspace{5pt}\\ 36x+54y&-54z&0&-27x&0&9x&-18x\vspace{5pt}\\ 0&54y&-36x-54z&18x&-9x&0&27x\vspace{5pt}\\0&0&-36x+54y&-9x&18x&-27x&0  \end{array} \right). $$
\end{example}

\bigskip
We promised in (\ref{6.0}) to show that $\Phi_3$ from Example~\ref{6.4} is the Macaulay inverse system for the ideal \begin{equation}\label{I3}I_3=(x^3,y^3,z^3):(x+y+z)^2.\end{equation} This promise is fulfilled in the next Lemma. It is clear that   the the right side of (\ref{I3}) contains $3$ linearly independent perfect cubes even though it is  not immediately obvious from the generators of $I_3$ listed in (\ref{x3}) that $x^3$ is in $I_3$. On the other hand, $(\rho_3\otimes b_1)((y^2)^*+2(yz)^*+(z^2)^*)=54x^3$.

\begin{lemma}\label{6.2}The Macaulay inverse system for the ideal $(x^{n}, y^{n}, z^{n}):(x+y+z)^{n-1}$ in the ring $\mathbb Z[x,y,z]$ is
$$ 
\sum_{\begin{array}{cc} a+b+c =n-1 \\ a,b,c \leq n-1 \end{array}}
\binom{n-1}{a, b, c} (x^{n-1-a}y^{n-1-b}z^{n-1-c})^*.$$
\end{lemma}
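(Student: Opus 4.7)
The plan is to reduce to an explicit computation in the divided power module via the elementary colon-shift identity. First I would establish, for any $\Psi \in D_\bullet^{\mathbb Z}(U^*)$ and any $f \in \operatorname{Sym}_\bullet^{\mathbb Z}U$, the identity $\operatorname{ann}(f \cdot \Psi) = \operatorname{ann}(\Psi) : f$, which is a one-line consequence of the module axiom $(gf)(\Psi) = g(f(\Psi))$ recalled in (\ref{mod-act}): an element $g$ kills $f\Psi$ iff $gf$ kills $\Psi$ iff $g \in \operatorname{ann}(\Psi):f$.

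Next I would identify the Macaulay inverse system $\Psi_0$ for the complete intersection $(x^n, y^n, z^n)$. Setting $\Psi_0 = (x^{n-1}y^{n-1}z^{n-1})^*$ and iterating (\ref{xim}), a monomial $x^a y^b z^c$ annihilates $\Psi_0$ precisely when at least one of $a$, $b$, $c$ exceeds $n - 1$, so $\operatorname{ann}(\Psi_0) = (x^n, y^n, z^n)$. Applying the colon-shift identity with $f = (x+y+z)^{n-1}$ then reduces the lemma to the assertion
\begin{equation*}
(x+y+z)^{n-1} \cdot (x^{n-1}y^{n-1}z^{n-1})^* \;=\; \sum_{\substack{a+b+c=n-1\\ 0 \le a,b,c \le n-1}} \binom{n-1}{a,b,c}\,(x^{n-1-a}y^{n-1-b}z^{n-1-c})^*.
\end{equation*}

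Finally, expanding $(x+y+z)^{n-1}$ by the multinomial theorem and applying each monomial term $x^a y^b z^c$ to $\Psi_0$ via (\ref{xim}) produces $(x^{n-1-a}y^{n-1-b}z^{n-1-c})^*$; summing with the multinomial coefficients recovers exactly the claimed formula. There is no serious obstacle here: the only book-keeping required is keeping the divided power convention of \ref{conventions}.\ref{conv-f} straight and observing that the nominal constraints $a,b,c \le n-1$ in the index of summation are automatically enforced by $a + b + c = n - 1$ with $a,b,c \geq 0$, so no terms are lost to the vanishing clause in (\ref{xim}).
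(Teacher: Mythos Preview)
Your proof is correct and follows essentially the same approach as the paper: identify the inverse system of $(x^n,y^n,z^n)$ as $(x^{n-1}y^{n-1}z^{n-1})^*$, use the module identity $(gf)(\Psi)=g(f(\Psi))$ to pass from the colon ideal to the action of $(x+y+z)^{n-1}$ on this element, and then expand via the multinomial theorem. The only difference is cosmetic---you state the colon-shift identity $\operatorname{ann}(f\Psi)=\operatorname{ann}(\Psi):f$ as a separate lemma, whereas the paper folds it into a single chain of biconditionals.
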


\begin{proof}
Observe that 
\begin{align*}  f \in (x^{n}, y^{n}, z^{n}):(x+y+z)^{n-1} & \Leftrightarrow f(x+y+z)^{n-1} \in (x^{n}, y^{n}, z^{n}).
 \end{align*}
The Macaulay inverse system of  $(x^{n}, y^{n}, z^{n})$ is $(x^{n-1}y^{n-1}z^{n-1})^*$; therefore,  
$$\begin{array}{lll}
 f \in (x^{n}, y^{n}, z^{n}):(x+y+z)^{n-1} & \Leftrightarrow& f(x+y+z)^{n-1} \in  \operatorname{ann}\left((x^{n-1}y^{n-1}z^{n-1})^*\right) \vspace{5pt}\\
&  \Leftrightarrow & f(x+y+z)^{n-1} \left((x^{n-1}y^{n-1}z^{n-1})^*\right) =0\vspace{5pt}\\
& \Leftrightarrow & f\left((x+y+z)^{n-1} \left((x^{n-1}y^{n-1}z^{n-1})^*\right)\right) =0\vspace{5pt}\\
&  \Leftrightarrow &f\left(  \sum\limits_{\genfrac{}{}{0pt}{}{a+b+c =n-1}{a,b,c \leq n-1}}\binom{n-1} {a, b, c} (x^{n-1-a}y^{n-1-b}z^{n-1-c})^*\right)=0.
 \end{array}$$
\end{proof}


\begin{thebibliography}{99}



\bibitem{Bo} {R.~B{\o}gvad, {\em
Gorenstein rings with transcendental Poincar\'e series},
Math. Scand. \textbf{53} (1983), 5--15.} 

\bibitem{Bot}  {N.~Botbol, {\em Compactifications of rational maps, and the implicit equations of their images},  J. Pure Appl. Algebra \textbf{215}  (2011),   1053--1068.}

\bibitem{BV}{W.~Bruns and U.~Vetter, {\em  Determinantal rings},  Lecture Notes in Mathematics {\bf 1327},   Springer-Verlag,  Berlin, 1988.}


\bibitem{BE75} {D.~Buchsbaum and D.~Eisenbud {\em
Generic free resolutions and a family of generically perfect ideals}, 
Advances in Math {\bf 18} (1975), 245--301.}

\bibitem{BE77} {D.~Buchsbaum and D.~Eisenbud {\em Algebra structures for finite free resolutions, and some structure theorems for ideals of codimension $3$}, Amer. J. Math. \textbf{99} (1977), 447--485.}



\bibitem{Bu}  {L.~Bus\'e, {\em On the equations of the moving curve ideal of a rational algebraic plane curve},  J. Algebra \textbf{321} (2009),  2317--2344.}


\bibitem{BB}  {L.~Bus\'e and T.~Ba {\em Matrix-based implicit representations of rational algebraic curves and applications}, 
 Comp. Aided Geom. Design \textbf{27}  (2010),  681--699.}


\bibitem{BD'A}  {L.~Bus\'e and C.~D'Andrea,
{\em Singular factors of rational plane curves},  J. Algebra \textbf{357}  (2012),  322--346.}


\bibitem{CWL}  {F.~Chen, W.~Wang, and Y.~Liu, 
{\em Computing singular points of plane rational curves}, 
J. Symbolic Comput. \textbf{43}  (2008),   92--117.} 

\bibitem{CHW}  {D.~Cox, J.~W.~Hoffman, and H.~Wang,  {\em Syzygies and the Rees algebra},  J. Pure Appl. Algebra \textbf{212}  (2008),   1787--1796.}


\bibitem{CKPU} {D.~Cox, A.~Kustin, C.~Polini, and B.~Ulrich, {\em A study of singularities on rational curves via syzygies}, Mem. Amer. Math. Soc. \textbf{222} (2013).}



\bibitem{EKK}{  S.~El Khoury and A.~Kustin, {\em
Artinian  Gorenstein algebras with  linear resolutions}, available on the arXiv, 2013.} 

\bibitem{Ho-T} {M.~Hochster, {\em Topics in the homological theory of modules over commutative rings},
 Conference Board of the Mathematical Sciences Regional Conference Series in Mathematics {\bf 24},  American Mathematical Society, Providence, R.I., 1975.}




\bibitem{HSV}  {J.~Hong, A.~Simis, and W.~Vasconcelos, {\em 
On the homology of two-dimensional elimination}, 
J. Symbolic Comput. \textbf{43}  (2008),   275--292.}


  

\bibitem{I84} {A.~Iarrobino, {\em Compressed algebras: Artin algebras having given socle degrees and maximal length},  Trans. Amer. Math. Soc. \textbf{285} (1984),  337--378.}

\bibitem{M16}  {F.~S.~Macaulay {\em The algebraic theory of modular systems},  Cambridge University Press,  Cambridge, 1916, reissued with an Introduction by P. Roberts in 1994.}

\bibitem{R} {M.~Reid, {\em Gorenstein in codimension $4$ - the general structure theory}, available on the arXiv.}

\bibitem{RS} {M.~Rossi and L.~\c{S}ega, {\em Poincar\'e series of modules over compressed Gorenstein local rings}, available on the arXiv.}

\bibitem{SCG} {N.~Song,  F.~Chen, and R.~Goldman, {\em
Axial moving lines and singularities of rational planar curves}, 
Comput. Aided Geom. Design \textbf{24}  (2007),   200--209.}



\bibitem{W} {J.~Weyman, {\em
Cohomology of vector bundles and syzygies}, 
Cambridge Tracts in Mathematics \textbf{149},  Cambridge University Press, Cambridge, 2003.}



\end{thebibliography}
\end{document}